\newtheorem{theorem}{Theorem}[section]
\newtheorem{lemma}[theorem]{Lemma}
\newtheorem{proposition}[theorem]{Proposition}
\theoremstyle{definition}
\newtheorem{definition}[theorem]{Definition}
\theoremstyle{remark}
\newtheorem{remark}[theorem]{Remark}
\numberwithin{equation}{section}
\def\bu{\boldsymbol{u}}
\def\d{\operatorname{d}}
\def\v{\boldsymbol{v}}
\def \w {\boldsymbol{w}}
\def\vh{\boldsymbol{v}_h}
\def\div{\operatorname{div}}
\def\curl{\operatorname{curl}}
\def\wh{\boldsymbol{w}_h}
\def \Vrk{\boldsymbol{V}_{r-1, k+1}(K)}
\begin{document}
	\begin{center}
		\begin{LARGE}
			{A grad-curl conforming virtual element method for a grad-curl problem linking the 3D quad-curl problem and Stokes system}
		\end{LARGE}
	\end{center}
	
	\title{}
	\author{Xiaojing Dong$^1$}
	\address{$^1$Hunan Key Laboratory for Computation and Simulation in Science and Engineering, Key Laboratory of Intelligent Computing \& Information Processing of Ministry of Education, School of Mathematics and Computational Science, Xiangtan University, Er huan Road, Xiangtan,  411105, Hunan, P.R. China}
	\email{dongxiaojing99@xtu.edu.cn}
	\thanks{The research was supported by the National Natural Science Foundation of China (Nos: 12071404, 11971410, 12071402, 12261131501), Young Elite Scientist Sponsorship Program by CAST (No: 2020QNRC001), the Science and Technology Innovation Program of Hunan Province (No: 2024RC3158), Key Project of Scientific Research Project of Hunan Provincial Department of Education (No: 22A0136), Postgraduate Scientific Research Innovation Project of Hunan Province (No: LXBZZ2024112), the Project of Scientific Research Fund of the Hunan Provincial Science and Technology Department (No.2023GK2029, No.2024ZL5017), and Program for Science and Technology Innovative Research Team in Higher Educational Institutions of Hunan Province of China.}
	
	\author{Yibing Han$^{1\dagger}$}
	\thanks{$^\dagger$ Corresponding author.}
	
	\email{202331510114@smail.xtu.edu.cn}
	
	\author{Yunqing Huang$^1$}
	\email{hangyq@xtu.edu.cn}
	
	\subjclass[2020]{Primary 65N30, 65N15}

	\keywords{$\boldsymbol{H}(\operatorname*{grad-curl})$-conforming; virtual elements; quad-curl problem; vector potential formulation;  Stokes complex; polyhedral meshes}
	
	\begin{abstract}
		Based on the Stokes complex with vanishing boundary conditions and its dual complex, we reinterpret a grad-curl problem arising from the quad-curl problem as a new vector potential formulation of the three-dimensional Stokes system.
		By extending the analysis to the corresponding boundary value problems and the accompanying trace complex, we construct a novel $\boldsymbol{H}(\operatorname{grad-curl})$-conforming virtual element space with arbitrary approximation order that satisfies the exactness of the associated discrete Stokes complex. In the lowest-order case, three degrees of freedom are assigned to each vertex and one to each edge. For the grad-curl problem, we rigorously establish the interpolation error estimates, the stability of discrete bilinear forms, and the convergence of the proposed element on polyhedral meshes. As a discrete vector potential formulation of the Stokes problem, the resulting system is pressure-decoupled and symmetric positive definite. Some numerical examples are presented to verify the theoretical results.
	\end{abstract}

	\maketitle

	\section{Introduction}\label{sec1}
	
	\label{sec:into}
	Let $\Omega \subset \mathbb{R}^3$ be a contractible Lipschitz polyhedron with boundary $\Gamma$ and unit outward normal $\boldsymbol{n}$. We consider the following de Rham complex with enhanced smoothness and homogeneous boundary conditions:
	\begin{equation}\label{StokesComplex}
		0 \stackrel{}{\longrightarrow} H_0^{1}(\Omega) \stackrel{\nabla}{\longrightarrow} \boldsymbol{V}_0(\Omega) \stackrel{\nabla \times}{\longrightarrow}\boldsymbol{H}_0^1(\Omega) \stackrel{\nabla \cdot }{\longrightarrow} L_0^2(\Omega) \longrightarrow 0,
	\end{equation}
	where $$\boldsymbol{V}_0( \Omega) := \{\boldsymbol{v} \in \boldsymbol{L}^2(\Omega) : \nabla \times \boldsymbol{v} \in \boldsymbol{H}_0^1(\Omega) \text{ and } \v \times \boldsymbol{n}=0 \text{ on } \Gamma\}$$ 
	denotes the $\boldsymbol{H}(\operatorname{grad-curl})$-conforming space. This complex is termed the Stokes complex because its last two spaces provide the theoretical foundation for discretizing the Stokes equations with no-slip boundary conditions \cite{John2017}: find the velocity $\bu$ and pressure $p$ such that
	\begin{equation}
		\begin{aligned}\label{StokesProblem}
			-\nu \Delta\boldsymbol{u} + \nabla p &= \boldsymbol{f} \quad \text{in }\Omega,\\
			\nabla\cdot \boldsymbol{u} &=0 \quad \text{in }\Omega, \\
			\boldsymbol{u} &= 0 \quad \text{on }\Gamma,
		\end{aligned}
	\end{equation}
	where $\nu>0$ is the dynamic viscosity, $\boldsymbol{f} \in \boldsymbol{H}^{-1}(\Omega)$ is the external body force.
	The divergence constraint $\nabla\cdot \bu=0$ enforces fluid incompressibility, equivalent to mass conservation.
	Significant research has been devoted to the construction of stable, divergence-free velocity-pressure pairs that satisfy a discrete version of the Stokes complex \eqref{StokesComplex} or its variants, as seen in \cite{VEMforStokes,Chen2024,Falk2013,ZZMQuadCurl,Neilan2015}.  
	\par 
	Another important application of the Stokes complex \eqref{StokesComplex} utilizes its first two spaces to solve the following quad-curl problem \cite{ZZMQuadCurl,HuangXHQuadCurl}: given vector fields $\boldsymbol{j}$, find $\boldsymbol{\psi}$ such that
	\begin{equation}\label{primalForm}
		\begin{aligned}
			(\nabla\times)^4  \boldsymbol{\psi} &=\boldsymbol{j} \quad\text {in } \Omega, \\
			\nabla \cdot \,  \boldsymbol{\psi} &=0 \quad\text{in } \Omega, \\
			\boldsymbol{\psi} \times \boldsymbol{n} &=0 \quad\text{on } \Gamma, \\
			\nabla \times \boldsymbol{\psi} \times \boldsymbol{n} &=0 \quad\text {on } \Gamma.
		\end{aligned}
	\end{equation}
	The quad-curl operator $(\nabla\times)^4$ plays a pivotal role in diverse applications,  including inverse electromagnetic scattering in non-homogeneous media \cite{Cakoni2010,Cakoni2007,Monk2012} and magnetohydrodynamics (MHD) \cite{Zheng2011}. We also refer the reader to other effective numerical methods for this problem \cite{BrennerCavanaughSung2024,ChenG2021,ChenHuangZhang2025,Hu2020,WangWangZhang2023,ZJQQuadCurl}.
	\par 
	In this paper, we present several new Helmholtz-type decompositions for the dual spaces of $\boldsymbol{V}_0(\Omega)$, $\boldsymbol{H}_0^1(\Omega)$, and $\boldsymbol{X}(\Omega)$, where $\boldsymbol{X}(\Omega)$ denotes the $\boldsymbol{L}^2$-orthogonal complement in $\boldsymbol{V}_0(\Omega)$ defined as
	\begin{equation*}
		\boldsymbol{X}(\Omega) := \{\boldsymbol{v} \in \boldsymbol{V}_0(\Omega): (\boldsymbol{v}, \nabla q) = 0, \ \forall q \in H_0^1(\Omega)\}.
	\end{equation*}
	These results are derived by constructing two commutative diagrams that connect short complexes with their dual counterparts \cite{ChenLong2018}, thereby establishing a fundamental link between the quad-curl and Stokes problems. Specifically, given $\boldsymbol{j}\in \boldsymbol{X}'(\Omega)$, the dual space of $\boldsymbol{X}(\Omega)$ satisfying $\boldsymbol{X}'(\Omega)=\boldsymbol{H}^{-2}(\Omega) \cap \ker(\nabla\cdot)$, the exactness of the dual complex guarantees the existence of $\boldsymbol{f}_0 \in \boldsymbol{H}^{-1}(\Omega)$ such that $\boldsymbol{j} = \nabla \times \boldsymbol{f}_0$. The corresponding weak formulation of \eqref{primalForm} is then given by: find $\boldsymbol{\psi}\in\boldsymbol{X}(\Omega)$ such that
	\begin{equation}\label{GradCurlVar}
		(\nabla \nabla\times \boldsymbol{\psi}, \nabla\nabla\times \boldsymbol{\phi})=\langle\boldsymbol{f}_0,\nabla\times \boldsymbol{\phi}\rangle, \quad \forall\boldsymbol{\phi}\in \boldsymbol{X}(\Omega).
	\end{equation}
	To distinguish it from the primal weak form of the quad-curl problem \eqref{primalForm} with the right-hand side $\langle \boldsymbol{j}, \boldsymbol{\phi}\rangle$, 
	we refer to the formulation \eqref{GradCurlVar}
	as \emph{grad-curl problem}.  Moreover, since both $\boldsymbol{f}$ and $\boldsymbol{f}_0$ belong to $\boldsymbol{H}^{-1}(\Omega)$, the identification $\boldsymbol{j} = \frac{1}{\nu} \nabla \times  \boldsymbol{f} $ allows us to interpret the grad-curl problem \eqref{GradCurlVar} as a novel vector potential formulation for the Stokes problem \eqref{StokesProblem}.  This implies that the unique solution $\bu$ to \eqref{StokesProblem} admits a divergence-free vector potential given by the unique solution $\boldsymbol{\psi}$ to \eqref{primalForm}, such that $$\boldsymbol{u} = \nabla \times \boldsymbol{\psi}.$$
	Moreover, we define a suitable trace complex that enables the grad–curl problem \eqref{GradCurlVar} with non-homogeneous boundary conditions to serve as a vector potential formulation for the Stokes problem \eqref{StokesProblem} endowed with slip boundary conditions.
	\par 
	The virtual element method (VEM) \cite{VEM2013,VEM2014} is a natural extension of the finite element method (FEM) to polygonal and polyhedral meshes. It retains the conformity of FEM while offering greater flexibility and ease of design. This is achieved by constructing the basis functions as solutions to local boundary value problems on each element, thereby avoiding explicit polynomial representations while allowing a straightforward extension to higher orders; see \cite{VEM2013,VEMfordivcurl, Antonietti2014, VEMforStokes,VEMforMaxwell,ZJQQuadCurl,VEMforMHD}.
	In two dimensions, we refer to the VEMs for the quad-curl problem \cite{ZJQQuadCurl} and the Stokes stream function formulation \cite{Antonietti2014,Mora2025}; however, it should be noted that these address two entirely unrelated continuous problems. For the three-dimensional case, \cite{VEMforStokes} has first proposed a 
	$\boldsymbol{H}(\operatorname*{grad-curl})$-conforming virtual element space based on a non-homogeneous vector biharmonic problem. This problem serves as a classical vector potential formulation for the Stokes system, as presented in \cite[Chapter I, Section 5.3]{GiraultRaviart1986}.  However, this space lacks polynomial completeness, making it unsuitable for discretizing the quad-curl problem \eqref{primalForm} and the grad-curl problem \eqref{GradCurlVar}. 
	\par 
	To address this limitation, we develop three new families of 
	$\boldsymbol{H}(\operatorname{grad-curl})$-conforming virtual element spaces $\boldsymbol{V}_{r-1,k+1}(\Omega)$, based on the grad-curl boundary value problem with parameter choices $r = k$, $k+1$, and $k+2$.
	This generalizes the discrete complex from \cite{VEMforStokes} to arbitrary approximation orders $r,$ $ k \ge 1$, defined as follows:
	\begin{equation}\label{IntroDiscomplex}
		\mathbb{R} \stackrel{\subset}{\longrightarrow} U_r(\Omega)\stackrel{\nabla}{\longrightarrow} \boldsymbol{V}_{r-1,k+1}(\Omega) \stackrel{\nabla \times }{\longrightarrow} \boldsymbol{W}_{k}(\Omega) \stackrel{\nabla \cdot }{\longrightarrow} Q_{k-1}(\Omega)\longrightarrow 0.
	\end{equation} 
	Furthermore, the exactness of the sequence from $\boldsymbol{V}_{r-1,k+1}(K)$ to $\boldsymbol{W}_{k}(K)$ is established by identifying the grad-curl boundary value problem as the vector-potential formulation of the non-homogeneous Stokes system.
	These families generalize the $\boldsymbol{H}(\operatorname{grad-curl})$-conforming finite elements in \cite{ZZMQuadCurl} to general polyhedral meshes. In particular, the lowest-order case ($r=k=1$) recovers the simplest finite element structure on simplex meshes, with degrees of freedom solely at the vertices and edges. Using interpolation operators, we establish a commutative diagram linking the discrete complex \eqref{IntroDiscomplex} to the continuous one. Furthermore, we derive interpolation error estimates by leveraging existing results from $\boldsymbol{H}(\operatorname{curl})$-conforming virtual element spaces \cite{VEMforGener} and vector $\boldsymbol{H}^1$-conforming spaces \cite{VEMforStokes, VEMforMHD}.
	\par
	To discretize the grad–curl problem \eqref{GradCurlVar}, we construct a discrete bilinear form on the space $\boldsymbol{V}_{r-1,k+1}(K)$. Its stability is established through a trace inequality for the functions in $\boldsymbol{V}_{r-1,k+1}(\Omega)$, leading to a scaled $\boldsymbol{H}(\operatorname{grad-curl})$ upper bound. We further prove the well-posedness of the resulting discrete grad–curl problem and identify it as a discrete vector potential VEM for the Stokes problem \eqref{StokesProblem}. 
	Convergence is established in the $\boldsymbol{H}(\operatorname{grad-curl})$-norm, as well as in the $\boldsymbol{L}^2$-norm and the $\boldsymbol{H}^1$-seminorm involving the curl operator. The latter convergence rate aligns with that of the vector $\boldsymbol{H}^1$-conforming VEM \cite{VEMforStokes} applied to the Stokes problem \eqref{StokesProblem}. 
	In addition, we introduce a reduced version of the discrete complex \eqref{IntroDiscomplex} to compare the number of degrees of freedom between the velocity-pressure formulation from \cite{VEMforStokes} and our vector potential formulation. From a theoretical perspective, we also discuss the distinct advantages of our approach, such as its symmetric positive definite structure and inherent pressure-decoupling.
	\par
	The remainder of this paper is organized as follows. Section 2 presents some notation and preliminaries.
	Section 3 demonstrates how the grad-curl problem provides a link between the quad-curl problem and the Stokes system.
	Section 4 constructs the exact virtual element complex and establishes interpolation error estimates. Section 5 establishes the well-posedness of the discrete problem and derives optimal error estimates. Section 6 validates the theory through numerical experiments.
	
	\section{Preliminaries}	
	\subsection{Notation}
	We consider a sequence of conforming polyhedral meshes $\{\mathcal{T}_h\}_h$ partitioning $\Omega$ into polyhedra $K$, where $h := \max_{K \in \mathcal{T}_h} h_K$ denotes the maximum element diameter. For each element $K \in \mathcal{T}_h$, let $\partial K$ denote its boundary with unit outward normal $\boldsymbol{n}_{\partial K}$. Each edge $e$ of $K$ is associated with a unit tangent $\boldsymbol{t}_e$. On any face $f \in \partial K$, the restriction of $\boldsymbol{n}_{\partial K}$ is denoted by $\boldsymbol{n}_f$. The boundary $\partial f$  is equipped with the unit outward normal $\boldsymbol{n}_{\partial f}$ and the unit tangent $\boldsymbol{t}_{\partial f}$, oriented counter-clockwise with respect to $\boldsymbol{n}_{\partial f}$. For a general polyhedron $K$, let $N_v$, $N_e$, and $N_f$ be the number of its vertices, edges, and faces, respectively.
	\par 
	For any geometric entity $G$ (element, face, edge, or domain $\Omega$) with the unit outward normal  $\boldsymbol{n}_{\partial G}$,
	we define $h_G$ as the diameter of $G$, $|G|$ as its measure, and $\boldsymbol{b}_G$ as its barycenter. The mesh family $\{\mathcal{T}_h\}_h$ is assumed to satisfy the following regularity conditions uniformly in $h$ (see \cite{VEMforGener,VEMforStokes,VEMforMaxwell}):  \par
	\textbf{(A1)} $K$ is star-shaped with respect to a ball of radius $\geq \mu h_K$.  \par
	\textbf{(A2)} each face $f$ of $K$ is star-shaped with respect to a disk of radius $\geq \mu h_K$.  \par
	\textbf{(A3)} each edge $e$ of $K$ has a length of at least $\mu h_K$. \par 
	Here, $\mu > 0$ is a fixed constant independent of $h$. Throughout this paper, we use $a \lesssim b$ (resp. $a \gtrsim b$) to denote $a \leq C b$ (resp. $a \geq C b$), where $C > 0$ is a generic constant independent of $h$.
	\par 
	For any  $s\ge 0$, let $H^s(G)$ and $H^s_0(G)$ denote the standard Sobolev spaces over $G$, equipped with the norm $\|\cdot\|_{s,G}$ and the seminorm $|\cdot|_{s,G}$. We identify $H^0(G)$ with $L^2(G)$, where the norm $\|\cdot\|_G$ and the inner product $(\cdot, \cdot)_G$ are used. 
	The duality pairing is denoted by $\langle \cdot, \cdot \rangle_G$, and the corresponding dual space $H^{-s}(G)$ is defined, equipped with the norm $\|\cdot\|_{-s, G}$.
	When $G = \Omega$, the subscript $G$ is omitted. 
	For $k\in \mathbb{N}_0$, let $P_k(G)$ denote the space of polynomials of degree at most $k$ on the geometric entity $G$, with $P_{-1}(G)={0}$.
	Vector-valued Sobolev spaces are denoted by $\boldsymbol{H}^m(G; \mathbb{R}^d)$, $\boldsymbol{H}_0^m(G; \mathbb{R}^d)$, $\boldsymbol{L}^2(G; \mathbb{R}^d)$ and $\boldsymbol{P}_k(G;\mathbb{R}^d)$ for $d=2, 3$. When the ambient dimension $d$ is clear, we employ the simplified notation by writing, for example, $\boldsymbol{L}^2(\Omega)=\boldsymbol{L}^2(\Omega;\mathbb{R}^3)$, $\boldsymbol{L}^2(\partial K)=\boldsymbol{L}^2(\partial K;\mathbb{R}^3)$, and $\boldsymbol{L}^2(f)=\boldsymbol{L}^2(f;\mathbb{R}^2)$. 
	Standard differential operators ($\nabla$, $\Delta$, $\nabla \times$,  $\nabla \cdot$) and 
	Sobolev spaces $\boldsymbol{H}(\operatorname{curl};G),\boldsymbol{H}_0(\operatorname{curl};G), 	\boldsymbol{H}(\operatorname{div};G)$ and $	\boldsymbol{H}_0(\operatorname{div};G)$ retain their 	conventional meanings. 
	In particular, for a face $f \in \partial K$ situated in the $(x_1,x_2)$-plane, we define the two-dimensional vector curl for a scalar function $q$ by
	\begin{equation*}
		\overrightarrow{\nabla}_f\times  q= (\partial_{x_2}q, -\partial_{x_1}q)
	\end{equation*}
	and the scalar curl for a vector function $\v=(v_1(x_1,x_2),v_2(x_1,x_2))^T$ by
	\begin{equation*}
		\nabla_f \times \v =  \partial_{x_1}v_2- \partial_{x_2} v_1.
	\end{equation*}
	We also introduce the following functional spaces:
	\begin{gather*}
		\boldsymbol{V}(G):=\{\boldsymbol{v}\in \boldsymbol{L}^2(G;\mathbb{R}^3):\nabla\times \boldsymbol{v}\in \boldsymbol{H}^1(G;\mathbb{R}^3)\},\\
		\boldsymbol{V}_{0}(G):=\{ \boldsymbol{v}\in \boldsymbol{V}(G): \boldsymbol{u}\times \boldsymbol{n}_{\partial G} \text{ and } \nabla \times \boldsymbol{v} =0 
		\text{ on } \partial G \},
	\end{gather*}
	equipped with the graph norm
	\begin{equation*}
		\|\boldsymbol{v}\|^2_{\boldsymbol{V}(G)}=\|\boldsymbol{v}\|^2_G+\|\nabla\times\boldsymbol{v}\|_{1,G}^2.
	\end{equation*}   
	Note that $\boldsymbol{V}_0(G)$ is the closure of $\boldsymbol{C}_0^{\infty}(G)$ in the $\|\cdot\|_{\boldsymbol{V}(G)}$-norm, which follows from a standard argument analogous to that in \cite[Theorem 2.3]{QuadCurlXuLiWei}. Furthermore, incorporating the divergence-free condition, we define
	\begin{gather*}
		\boldsymbol{H}(\operatorname{div}^0;G):=\{ \v \in \boldsymbol{L}^2(G):\, (\v,\nabla q) = 0, \forall q\in H^1_0(G                                                                                                                                )\}, \\
		\boldsymbol{X}(G):=\boldsymbol{V}_0(G) \cap \boldsymbol{H}(\operatorname{div}^0;G).
	\end{gather*}
	The spaces $\boldsymbol{X}(\Omega)$ and $\boldsymbol{H}_0(\operatorname{curl;\Omega})\cap \boldsymbol{H}(\operatorname{div}^0;\Omega)$ satisfy the following Friedrichs inequality \cite[Remark 2.16 and Proposition 3.7]{Amrouche1998}.
	\begin{lemma}
		If $\Omega$ is a Lipschitz polyhedron with a connected boundary, there exists $s>\frac{1}{2}$ and positive constant $C_F$ such that
		\begin{equation}\label{FreIneq}
			\|\v\|_s\le C_F \|\nabla\times \v\|,\quad \forall \v \in  	\boldsymbol{X}(\Omega) \text{ or } \boldsymbol{H}_0(\curl;\Omega)\cap\boldsymbol{H}(\div^0;\Omega).
		\end{equation}
		The regularity exponent $s$ is typically $\frac{1}{2}$ for the general Lipschitz domains and increases to $s = 1$ when $\Omega$ is convex.
	\end{lemma}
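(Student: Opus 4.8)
The plan is to first invoke the known fractional Sobolev regularity of $\boldsymbol{L}^2$ vector fields having $\boldsymbol{L}^2$ curl and divergence together with a prescribed tangential trace — which comes with an a~priori bound carrying a lower-order $\boldsymbol{L}^2$ term — and then to absorb that term via a compactness argument that exploits the triviality of harmonic Dirichlet fields on a domain with connected boundary. At the outset I would reduce the two cases to one: since $\v\times\boldsymbol{n}=0$ on $\Gamma$ together with $\nabla\times\v\in\boldsymbol{L}^2(\Omega)$ already forces $\v\in\boldsymbol{H}_0(\curl;\Omega)$, we have $\boldsymbol{X}(\Omega)\subset\boldsymbol{Y}:=\boldsymbol{H}_0(\curl;\Omega)\cap\boldsymbol{H}(\div^0;\Omega)$, so it is enough to prove \eqref{FreIneq} for $\boldsymbol{Y}$.

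For the regularity input I would cite the imbedding theorem of \cite{Amrouche1998}: on a Lipschitz polyhedron there exist $s>\tfrac12$ and $C_1>0$ with $\boldsymbol{H}_0(\curl;\Omega)\cap\boldsymbol{H}(\div;\Omega)\hookrightarrow\boldsymbol{H}^s(\Omega)$ and
\[
\|\v\|_s\le C_1\big(\|\v\|+\|\nabla\times\v\|+\|\nabla\cdot\v\|\big),
\]
with $s=1$ admissible when $\Omega$ is convex. For $\v\in\boldsymbol{Y}$ the divergence term drops out, leaving $\|\v\|_s\le C_1\big(\|\v\|+\|\nabla\times\v\|\big)$.

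The remaining step is a by-now-standard contradiction argument. If \eqref{FreIneq} failed, there would be $(\v_n)\subset\boldsymbol{Y}$ with $\|\v_n\|_s=1$ and $\|\nabla\times\v_n\|\to0$; from the bound above, $\|\v_n\|\ge C_1^{-1}-\|\nabla\times\v_n\|$, which is bounded below away from $0$ for $n$ large. Using the compact imbedding $\boldsymbol{H}^s(\Omega)\hookrightarrow\hookrightarrow\boldsymbol{L}^2(\Omega)$ I would extract a subsequence converging in $\boldsymbol{L}^2(\Omega)$ to some $\v$ with $\|\v\|>0$; since $\nabla\times\v_n\to0$ in $\boldsymbol{L}^2(\Omega)$ as well, the convergence takes place in $\boldsymbol{H}(\curl;\Omega)$, so $\nabla\times\v=0$ and, by continuity of the tangential trace on $\boldsymbol{H}(\curl;\Omega)$, $\v\times\boldsymbol{n}=0$ on $\Gamma$; moreover $\nabla\cdot\v=0$ because $\boldsymbol{H}(\div^0;\Omega)$ is $\boldsymbol{L}^2$-closed. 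Thus $\v$ would be a nonzero curl-free, divergence-free field with vanishing tangential trace, i.e.\ a nontrivial harmonic Dirichlet field. Since $\Gamma$ is connected the space of such fields is trivial \cite{Amrouche1998} (in our setting $\Omega$ is even contractible), a contradiction; this yields \eqref{FreIneq} on $\boldsymbol{Y}$, hence on $\boldsymbol{X}(\Omega)$, with the constant governed by the choice $s=1$ in the convex case.

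I expect the only real difficulty to be the regularity step: the estimate $\v\in\boldsymbol{H}^s$ with $s>\tfrac12$ on a non-smooth polyhedron is genuinely nontrivial — it cannot be upgraded to $\boldsymbol{H}^1$ for general Lipschitz geometries — and I would simply borrow it from \cite{Amrouche1998}. Everything else is routine Hilbert-space analysis, the one point requiring a little care being that each of the three defining conditions of $\boldsymbol{Y}$ (vanishing tangential trace, vanishing curl, vanishing divergence) survives passage to the $\boldsymbol{L}^2$ limit.
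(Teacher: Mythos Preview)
Your argument is correct. The reduction $\boldsymbol{X}(\Omega)\subset\boldsymbol{H}_0(\curl;\Omega)\cap\boldsymbol{H}(\div^0;\Omega)$ is valid, the regularity estimate you quote from \cite{Amrouche1998} is exactly the right input, and the compactness/contradiction step is the standard Peetre--Tartar mechanism for upgrading an estimate with a compact lower-order term to one without; the triviality of harmonic Dirichlet fields under the connected-boundary hypothesis is precisely what kills the would-be nonzero limit.

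By way of comparison: the paper does not actually prove this lemma. It is stated with a bare citation to \cite[Remark~2.16 and Proposition~3.7]{Amrouche1998} and no argument is given. So you have supplied strictly more than the paper does, namely an explicit derivation of the Friedrichs-type bound from the regularity embedding plus the cohomological fact that $\ker(\curl)\cap\boldsymbol{H}_0(\curl;\Omega)\cap\boldsymbol{H}(\div^0;\Omega)=\{0\}$ when $\Gamma$ is connected. Both routes ultimately rest on the same source \cite{Amrouche1998}; yours just unpacks how the two ingredients (the $\boldsymbol{H}^s$ embedding and the vanishing of the harmonic space) combine, whereas the paper treats the inequality as a black box.
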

	\subsection{The dual complex}
	Based on the generalized Helmholtz decomposition established in \cite{Chen2018}, this subsection characterizes the dual spaces of $\boldsymbol{H}_0^1(\Omega)$, $\boldsymbol{V}_0(\Omega)$, and $\boldsymbol{X}(\Omega)$, denoted by $\boldsymbol{H}^{-1}(\Omega)$, $\boldsymbol{V}'(\Omega)$, and $\boldsymbol{X}'(\Omega)$, respectively.
	\par 
	Following the definitions in \cite{Chen2018}, consider a Hilbert space $X$ with inner product $(\cdot,\cdot)_X$ and its dual space $X'$. The Riesz representation theorem establishes an isomorphism $J_X: X \to X'$, defined for any $w \in X$ by
	\begin{equation}
		\langle J_X w, v \rangle = (w, v)_X ,\quad \forall v \in X.
	\end{equation}
	The dual space $X'$ is equipped with the inner product
	\begin{equation*}
		(w',v')_{X'}:=((J_{X})^{-1}w',(J_X)^{-1}v')_X=\langle (J_{X})^{-1}w',v'\rangle=\langle w',(J_{X})^{-1}v'\rangle ,
	\end{equation*}
	which induces the corresponding norm on $X'$.
	Let $U$, $V$, and $W$ be Hilbert spaces.  Consider the short exact sequence:
	\begin{equation}\label{UVWcomplex}
		0  \stackrel{}{\longrightarrow} U \stackrel{\d_1}{\longrightarrow} V \stackrel{\d_2}{\longrightarrow} W.
	\end{equation}
	It follows from \cite[Remark 2.15]{Pauly2016} that the dual complex of \eqref{UVWcomplex} is exact:
	\begin{equation}\label{dualUVW}
		0 \longrightarrow W'/\ker(d_2') \stackrel{d_2'}{\longrightarrow} V' \stackrel{d_1'}{\longrightarrow} U' \longrightarrow 0.
	\end{equation}
	The exactness of \eqref{UVWcomplex} and \eqref{dualUVW} implies that, specifically,
	\begin{equation*}
		\operatorname{ker}(\d_1) = 0,\quad\ker(\d_2) = \operatorname{img}(\d_1),\quad \ker(\d_1')=\operatorname{img}(\d_2'), \quad\operatorname{img}(\d_1')=U'.
	\end{equation*}
	It is then apparent that  $d_1' J_V d_1$ defines an isomorphism from $U$ to $U'$. Setting $J_U = d_1' J_V d_1$ yields the following commutative diagram
	\begin{equation}\label{UVWcom1}
		\begin{tikzcd}
			W' \arrow[r,"d_2'"] 
			& V' \arrow[r,"d_1'"] 
			& U' \arrow[r] & 0 \\
			& V \arrow[u,"J_V"] 
			& U\arrow[l,"d_1"'] \arrow[u,"J_U" ].
		\end{tikzcd}
	\end{equation} 
	\begin{lemma}\label{HelDec1}
		\cite[Corollary 2.4]{Chen2018}
		Suppose that the short Hilbert sequence \eqref{UVWcomplex} is exact. Then we have the $(\cdot,\cdot)_{V'}$-orthogonal Helmholtz decomposition
		\begin{equation}\label{VdualDecomp}
			V'= d_2'(W'/\ker(d_2')) \oplus^{\bot} J_Vd_1 U.
		\end{equation}
	\end{lemma}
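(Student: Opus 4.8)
The plan is to realize \eqref{VdualDecomp} as the orthogonal splitting of $V'$ induced by the closed subspace $J_Vd_1U$, and then to identify its $(\cdot,\cdot)_{V'}$-orthogonal complement with $d_2'(W'/\ker(d_2'))$ by invoking the exactness of the dual complex \eqref{dualUVW}. First I would record the two structural facts that make this work. Since $d_2:V\to W$ is bounded, $\ker(d_2)$ is closed, and the exactness of \eqref{UVWcomplex} gives $d_1U=\operatorname{img}(d_1)=\ker(d_2)$; hence $d_1U$ is a closed subspace of $V$, and because $J_V$ is an isomorphism (indeed an isometry onto $V'$ with the $(\cdot,\cdot)_{V'}$-norm), $J_Vd_1U$ is a closed subspace of $V'$. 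Consequently $V'=J_Vd_1U\oplus^{\bot}(J_Vd_1U)^{\bot_{V'}}$, and the whole task reduces to showing $(J_Vd_1U)^{\bot_{V'}}=d_2'(W'/\ker(d_2'))$.

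Next I would check the inclusion $d_2'(W'/\ker(d_2'))\subseteq (J_Vd_1U)^{\bot_{V'}}$, i.e.\ the orthogonality of the two summands. For $\tau\in W'/\ker(d_2')$ and $u\in U$, using the defining identity $(w',v')_{V'}=\langle w',(J_V)^{-1}v'\rangle$ with $v'=J_Vd_1u$ and then the adjoint relation for $d_2'$, one computes $(d_2'\tau,J_Vd_1u)_{V'}=\langle d_2'\tau,d_1u\rangle=\langle \tau,d_2d_1u\rangle=0$, the last step because $\operatorname{img}(d_1)\subseteq\ker(d_2)$ forces $d_2d_1=0$ (and the value is independent of the chosen representative of $\tau$). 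For the reverse inclusion, take $v'\in(J_Vd_1U)^{\bot_{V'}}$; then for every $u\in U$ we get $0=(v',J_Vd_1u)_{V'}=\langle v',d_1u\rangle=\langle d_1'v',u\rangle$, so $d_1'v'=0$, i.e.\ $v'\in\ker(d_1')$. By the exactness of \eqref{dualUVW}, $\ker(d_1')=\operatorname{img}(d_2')=d_2'(W'/\ker(d_2'))$, hence $v'\in d_2'(W'/\ker(d_2'))$. Combining the two inclusions yields $(J_Vd_1U)^{\bot_{V'}}=d_2'(W'/\ker(d_2'))$, and therefore the $(\cdot,\cdot)_{V'}$-orthogonal decomposition \eqref{VdualDecomp}.

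The main obstacle is essentially bookkeeping rather than depth: one must apply the definition of $(\cdot,\cdot)_{V'}$, the Riesz identity for $J_V$, and the adjoint identity $\langle d_1'v',u\rangle=\langle v',d_1u\rangle$ consistently, and then use the exactness of the dual sequence \eqref{dualUVW}, which is where the standing hypothesis that \eqref{UVWcomplex} is a short exact sequence (in particular, the closed-range property of $d_2$) is really consumed. Everything else is a routine orthogonal-complement argument in a Hilbert space, and if desired the injectivity of $d_2'$ on the quotient $W'/\ker(d_2')$ can be added to present the first summand as an isomorphic copy of that quotient.
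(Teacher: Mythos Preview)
The paper does not supply its own proof of this lemma; it simply cites \cite[Corollary 2.4]{Chen2018} and moves on. Your argument is correct and complete: you use exactness of \eqref{UVWcomplex} to see that $d_1U=\ker(d_2)$ is closed, transfer this via the Riesz isometry $J_V$, verify orthogonality by the duality identity $(w',J_Vv)_{V'}=\langle w',v\rangle$ together with $d_2d_1=0$, and identify the orthogonal complement with $\operatorname{img}(d_2')$ via the exactness of the dual complex \eqref{dualUVW}. This is the standard proof one finds in the cited reference, so there is nothing to compare beyond noting that you have reconstructed it faithfully.
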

	The definition of the operator $J_V$ in the decomposition \eqref{VdualDecomp} depends on the inner product of the Hilbert space $V$.
	Sometimes we do not know the space $V$ explicitly, nor do we necessarily need to know it; accordingly, we present a more general result, namely that $J_V$ reduces to the identity operator on $\d_1 U$.
	\begin{lemma}\label{HelDec2}
		\cite[Corollary 2.5]{Chen2018}
		Suppose that we have a short exact Hilbert sequence \eqref{UVWcomplex} and  another short exact sequence
		\begin{equation*}
			W'\stackrel{d_2'}{\longrightarrow} \hat{V}\stackrel{d_1'}{\longrightarrow} U'\longrightarrow 0
		\end{equation*}
		with the following commutative diagram:
		\[
		\begin{tikzcd}
			W' \arrow[r,"d_2'"] 
			& \hat{V}\arrow[r,"d_1'"] 
			& U' \arrow[r] & 0 \\
			& V \arrow[u,"I"] 
			& U\arrow[l,"d_1"'] \arrow[u,"J_U" ],
		\end{tikzcd}
		\]
		where $I$ denotes an embedding operator. Then 
		\begin{equation*}
			\hat{V}=V'=\d_2'(W'/\ker(\
			d_2')) \oplus \d_1U.
		\end{equation*}
	\end{lemma}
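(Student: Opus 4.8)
The plan is to extract the direct‑sum structure of $\hat V$ directly from the exactness of the row $W'\xrightarrow{d_2'}\hat V\xrightarrow{d_1'}U'\to 0$ together with the fact that $J_U=d_1'I d_1$ is an isomorphism, and then to match the resulting splitting with the orthogonal Helmholtz decomposition of $V'$ furnished by Lemma~\ref{HelDec1}; this comparison yields the identification $\hat V=V'$ and shows that $J_V$ may be dropped on the $d_1U$ component.

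First I would establish the algebraic decomposition $\hat V=d_2'(W'/\ker d_2')\oplus I d_1 U$. Given $\hat v\in\hat V$, surjectivity of $d_1'$ gives $d_1'\hat v\in U'$, and since $J_U$ is an isomorphism onto $U'$ there is a unique $u\in U$ with $J_U u=d_1'\hat v$; by the commutativity relation $d_1'I d_1=J_U$ read off the diagram, $d_1'(\hat v-I d_1 u)=0$, so exactness yields $\hat v-I d_1 u\in\ker d_1'=\operatorname{img} d_2'$, i.e. $\hat v=I d_1 u+d_2'w'$ for some $w'\in W'/\ker d_2'$. The sum is direct: if $I d_1 u=d_2'w'$, then applying $d_1'$ and using $d_1'd_2'=0$ (which holds because $\operatorname{img} d_2'=\ker d_1'$) gives $J_U u=0$, hence $u=0$ and then $d_2'w'=0$, so $w'=0$ since $d_2'$ is injective on $W'/\ker d_2'$.

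Next I would promote this to a topological splitting and identify the space. From exactness of \eqref{UVWcomplex}, $d_1$ is injective with closed range, and the estimate $\|I d_1 u\|\gtrsim\|d_1'I d_1 u\|=\|J_U u\|\gtrsim\|u\|$ shows that $I d_1$ is bounded below; likewise $d_2'\colon W'/\ker d_2'\to\hat V$ is a topological isomorphism onto $\ker d_1'$ by the open mapping theorem. Hence the continuous bijection $T\colon (W'/\ker d_2')\times U\to\hat V$, $T(w',u)=d_2'w'+I d_1 u$, is bounded below (control $\|u\|$ by applying $d_1'$, then $\|w'\|$ by the triangle inequality), and therefore an isomorphism of Banach spaces. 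Lemma~\ref{HelDec1} provides the analogous isomorphism $(w',u)\mapsto d_2'w'+J_Vd_1u$ onto $V'$, the relevant identity being $J_U=d_1'J_Vd_1$ from diagram~\eqref{UVWcom1}. Composing one isomorphism with the inverse of the other produces a topological isomorphism $\Phi\colon\hat V\to V'$ that is the identity on $d_2'(W'/\ker d_2')$, satisfies $d_1'\circ\Phi=d_1'$, and carries $I d_1 U$ onto $J_Vd_1U$; identifying $\hat V$ with $V'$ through $\Phi$, the splitting becomes $V'=d_2'(W'/\ker d_2')\oplus d_1U$, so $J_V$ is replaced by the identity on the last summand.

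The step I expect to be the main obstacle is the passage from the purely algebraic direct sum to a topological one: one must verify that $d_2'$ is bounded below on the quotient (equivalently that $\operatorname{img} d_2'$ is closed, which here follows from the postulated exactness) and that $I d_1$ is bounded below, so that the two closed subspaces sum to a closed subspace and $T$ is an isomorphism rather than merely a bijection. A secondary point is to make precise that ``$\hat V=V'$'' is an identification through the canonical isomorphism $\Phi$; in the concrete function‑space settings of the later sections, where $d_1'$ and $d_2'$ are fixed differential operators acting in one ambient distribution space, $\Phi$ reduces to an honest equality.
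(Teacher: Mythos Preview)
The paper does not supply its own proof of this lemma: it is quoted verbatim as \cite[Corollary 2.5]{Chen2018} and used as a black box, so there is no in-paper argument to compare against.

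Your argument for the algebraic and topological splitting $\hat V=d_2'(W'/\ker d_2')\oplus I d_1 U$ is sound: the existence step uses exactness and the isomorphism $J_U=d_1'Id_1$ correctly, the uniqueness step is clean, and your bounded-below estimates for $Id_1$ and for $d_2'$ on the quotient are the right way to upgrade to a topological direct sum. The one point that deserves a sharper formulation is the passage from ``$\Phi:\hat V\to V'$ is an isomorphism'' to the literal equality $\hat V=V'$. As you yourself note, in the abstract Hilbert-space setting this is only an identification through $\Phi$; the honest equality relies on an unstated hypothesis that $d_1'$ and $d_2'$ are the \emph{same} operators acting in a common ambient space containing both $\hat V$ and $V'$ (so that $\operatorname{img}d_2'$ is literally the same subset of both, and $d_1'$ has the same kernel). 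That hypothesis is satisfied in every application in the paper---$d_1',d_2'$ are distributional differential operators and $\hat V,V'$ are both spaces of distributions---so your closing remark is exactly the right caveat; just be explicit that ``$\hat V=V'$'' should be read as ``$\hat V$ and $V'$ coincide as subspaces of the ambient distribution space'' rather than as a consequence of the abstract isomorphism alone.
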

	We now leverage the two lemmas to characterize the dual spaces $\boldsymbol{H}^{-1}(\Omega)$ and $\boldsymbol{V}'(\Omega)$, respectively. It is known that the Stokes complex \eqref{StokesComplex} is exact on contractible domains \cite{Arnold2018}. Additionally, we introduce the following complex for any $s\in \mathbb{R}$:
	\begin{equation}\label{Realscomplex}
		\mathbb{R} \stackrel{\subset}{\longrightarrow} H^{s+3}(\Omega)\stackrel{\nabla}{\longrightarrow} \boldsymbol{H}^{s+2}(\Omega) \stackrel{\nabla \times }{\longrightarrow} \boldsymbol{H}^{s+1}(\Omega)  \stackrel{\nabla \cdot }{\longrightarrow} H^{s}(\Omega)\longrightarrow 0.
	\end{equation}    
	This complex is exact on bounded domains that are starlike with respect to a ball \cite[p. 301]{Costabel2010}.
	\par
	On the Stokes complex \eqref{StokesComplex}, reducing the space $\boldsymbol{V}_0(\Omega)$ to its orthogonal complement $\boldsymbol{X}(\Omega)$ yields a short exact sequence
	\begin{equation*}
		0 \stackrel{}{\longrightarrow}\boldsymbol{X}(\Omega)\stackrel{\nabla \times}{\longrightarrow}\boldsymbol{H}_0^1(\Omega) \stackrel{\nabla \cdot }{\longrightarrow} L_0^2(\Omega) \longrightarrow 0,
	\end{equation*}
	which, combined with \cite[Remark 2.15]{Pauly2016}, implies the exactness of the dual complex 
	\begin{equation}\label{DualShortStokes}
		0\stackrel{}{\longrightarrow} L_0^2(\Omega)\stackrel{\nabla}{\longrightarrow} \boldsymbol{H}^{-1}(\Omega)\stackrel{\nabla \times}{\longrightarrow}\boldsymbol{X}'(\Omega)\longrightarrow 0.
	\end{equation}
	From the commutative diagram \eqref{UVWcom1} and the isomorphism $\Delta: \boldsymbol{H}_0^1(\Omega) \to \boldsymbol{H}^{-1}(\Omega)$, it follows that the operator $\nabla\times \Delta\nabla\times: \boldsymbol{X}(\Omega) \to \boldsymbol{X}'(\Omega)$ is an isomorphism, leading to the commutative diagram:
	\[
	\begin{tikzcd}
		L_0^2(\Omega) \arrow[r,"{\nabla }"] 
		& \boldsymbol{H}^{-1}(\Omega) \arrow[r,"{\nabla\times}"] 
		& \boldsymbol{X}'(\Omega) \arrow[r] & 0 \\
		& \boldsymbol{H}^1_0(\Omega) \arrow[u,"\Delta"] 
		& \boldsymbol{X}(\Omega) \arrow[l,"{\nabla \times }"'] \arrow[u,"{\nabla \times \Delta \nabla \times}" ].
	\end{tikzcd}
	\]
	By  Lemma \ref{HelDec1},  we obtain the $\boldsymbol{H}^{-1}$-orthogonal Helmholtz decomposition:
	\begin{equation}\label{H1DualDec}
		\boldsymbol{H}^{-1}(\Omega)=\nabla L_0^2(\Omega)\oplus^{\bot} \Delta(\nabla\times\boldsymbol{X}(\Omega)).
	\end{equation}
	From the exactness of \eqref{DualShortStokes} and \eqref{Realscomplex} with $s=-3$, we have 
	\begin{equation}\label{Xdual}
		\boldsymbol{X}'(\Omega)=
		\nabla\times \boldsymbol{H}^{-1}(\Omega)=\boldsymbol{H}^{-2}(\Omega)\cap\ker(\nabla\cdot).
	\end{equation}           
	
	\par 
	According to \cite[Remark 2.15 ]{Pauly2016}, the short exact sequence
	\begin{equation}
		0 \stackrel{}{\longrightarrow} H_0^{1}(\Omega) \stackrel{\nabla}{\longrightarrow} \boldsymbol{V}_0(\Omega) \stackrel{\nabla \times}{\longrightarrow}\boldsymbol{H}_0^1(\Omega),
	\end{equation}
	induces the exact dual complex:
	\begin{equation*}
		\boldsymbol{H}^{-1}(\Omega) \stackrel{\nabla \times}{\longrightarrow}
		\boldsymbol{V}'(\Omega)\stackrel{\nabla \cdot}{\longrightarrow}
		H^{-1}(\Omega) \stackrel{}{\longrightarrow}  0.
	\end{equation*}
	To characterize the dual space $\boldsymbol{V}'(\Omega)$, we construct another exact sequence of dual spaces. To this end, we define
	\begin{equation*}
		\boldsymbol{H}^{-2}(\div;\Omega):=\{\v\in\boldsymbol{H}^{-2}(\Omega): \nabla\cdot \v \in H^{-1}(\Omega)\},
	\end{equation*}
	equipped with the norm
	\begin{equation*}
		\|\v\|^2_{\boldsymbol{H}^{-2}(\div;\Omega)}:=\|\v\|^2_{-2}+\|\nabla\cdot \v\|^2_{-1}.
	\end{equation*}
	\begin{lemma}
		The complex
		\begin{equation*}
			\boldsymbol{H}^{-1}(\Omega)\stackrel{\nabla \times}{\longrightarrow}\boldsymbol{H}^{-2}(\div;\Omega) \stackrel{\nabla \cdot }{\longrightarrow} H^{-1}(\Omega) \longrightarrow 0
		\end{equation*}
		is exact.
	\end{lemma}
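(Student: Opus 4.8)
The plan is to verify exactness directly, in three stages: first that the displayed sequence is a genuine complex with every arrow well defined and bounded, then that $\nabla\cdot:\boldsymbol{H}^{-2}(\div;\Omega)\to H^{-1}(\Omega)$ is surjective, and finally that $\ker(\nabla\cdot)=\operatorname{img}(\nabla\times)$ inside $\boldsymbol{H}^{-2}(\div;\Omega)$. I will \emph{not} obtain this sequence by dualizing a primal Hilbert complex as in Lemmas~\ref{HelDec1}--\ref{HelDec2}: that route is circular here, since the role of the present lemma is precisely to furnish the exact dual sequence which, combined with Lemma~\ref{HelDec2} (taking $V=\boldsymbol{V}_0(\Omega)$ and $\hat{V}=\boldsymbol{H}^{-2}(\div;\Omega)$), will identify $\boldsymbol{V}'(\Omega)=\boldsymbol{H}^{-2}(\div;\Omega)$. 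Instead I deduce everything from the already-established identity \eqref{Xdual}, $\boldsymbol{X}'(\Omega)=\nabla\times\boldsymbol{H}^{-1}(\Omega)=\boldsymbol{H}^{-2}(\Omega)\cap\ker(\nabla\cdot)$. For the complex property and well-definedness, note that for $\boldsymbol{w}\in\boldsymbol{H}^{-1}(\Omega)$ the distribution $\nabla\times\boldsymbol{w}$ lies in $\boldsymbol{H}^{-2}(\Omega)$ with $\|\nabla\times\boldsymbol{w}\|_{-2}\lesssim\|\boldsymbol{w}\|_{-1}$ and satisfies $\nabla\cdot(\nabla\times\boldsymbol{w})=0$ in $\mathcal{D}'(\Omega)$; hence $\nabla\times\boldsymbol{w}\in\boldsymbol{H}^{-2}(\div;\Omega)$ with $\nabla\cdot(\nabla\times\boldsymbol{w})=0\in H^{-1}(\Omega)$, so $\nabla\times$ maps $\boldsymbol{H}^{-1}(\Omega)$ boundedly into $\boldsymbol{H}^{-2}(\div;\Omega)$, $\nabla\cdot$ is bounded on $\boldsymbol{H}^{-2}(\div;\Omega)$ by the definition of its graph norm, and $\operatorname{img}(\nabla\times)\subset\ker(\nabla\cdot)$.

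For surjectivity of the divergence, given $g\in H^{-1}(\Omega)$ I would take $\phi\in H^1_0(\Omega)$ to be the Lax--Milgram solution of $-\Delta\phi=g$ (the Poincaré inequality applies on the bounded Lipschitz domain $\Omega$) and set $\v:=-\nabla\phi$. Then $\v\in\boldsymbol{L}^2(\Omega)\subset\boldsymbol{H}^{-2}(\Omega)$ and $\nabla\cdot\v=-\Delta\phi=g\in H^{-1}(\Omega)$, so $\v\in\boldsymbol{H}^{-2}(\div;\Omega)$ with $\nabla\cdot\v=g$. Equivalently, one may invoke the exactness of \eqref{Realscomplex} with $s=-1$, which yields $\nabla\cdot:\boldsymbol{L}^2(\Omega)\to H^{-1}(\Omega)$ onto, together with the continuous inclusion $\boldsymbol{L}^2(\Omega)\subset\boldsymbol{H}^{-2}(\div;\Omega)$.

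It remains to prove exactness in the middle, i.e.\ $\ker(\nabla\cdot)\subset\operatorname{img}(\nabla\times)$. Let $\v\in\boldsymbol{H}^{-2}(\div;\Omega)$ with $\nabla\cdot\v=0$; then $\v\in\boldsymbol{H}^{-2}(\Omega)\cap\ker(\nabla\cdot)$, which by \eqref{Xdual} equals $\nabla\times\boldsymbol{H}^{-1}(\Omega)$, so $\v=\nabla\times\boldsymbol{w}$ for some $\boldsymbol{w}\in\boldsymbol{H}^{-1}(\Omega)$ and exactness follows (the range of $\nabla\times$ is then automatically closed, being $\ker(\nabla\cdot)$). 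The substantive ingredient is the characterization \eqref{Xdual}, which is already available before the lemma, so I do not anticipate a genuine obstacle; the only point deserving care is the surjectivity step, where the preimage must be produced in $H^{-1}(\Omega)$ (not merely in $L^2_0(\Omega)$) and must land in the graph space $\boldsymbol{H}^{-2}(\div;\Omega)$ — both of which the Poisson construction above delivers at once.
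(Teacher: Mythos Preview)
Your proposal is correct and follows essentially the same argument as the paper: surjectivity via the Poisson problem $-\Delta\phi=g$ in $H^1_0(\Omega)$ (equivalently, the isomorphism $\Delta:H^1_0(\Omega)\to H^{-1}(\Omega)$) with $\v=-\nabla\phi$, and exactness in the middle via $\boldsymbol{H}^{-2}(\div;\Omega)\cap\ker(\nabla\cdot)=\boldsymbol{H}^{-2}(\Omega)\cap\ker(\nabla\cdot)=\nabla\times\boldsymbol{H}^{-1}(\Omega)$. The only cosmetic difference is that the paper invokes the complex~\eqref{Realscomplex} with $s=-3$ directly, whereas you cite its consequence~\eqref{Xdual}; you are also more explicit about well-definedness and boundedness, which the paper leaves implicit.
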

	\begin{proof}
		Setting $s=-3$ in the complex \eqref{Realscomplex} gives $\boldsymbol{H}^{-2}(\Omega)\cap\ker(\nabla\cdot)=\nabla\times \boldsymbol{H}^{-1}(\Omega)$. Apparently $\boldsymbol{H}^{-2}(\div;\Omega)\cap \ker(\nabla\cdot)= \boldsymbol{H}^{-2}(\Omega)\cap \ker(\nabla\cdot)$, establishing the exactness of the former complex.
		\par
		Furthermore, since $\Delta: H_0^1(\Omega) \to H^{-1}(\Omega)$ is  an isomorphism,  we have 
		\begin{equation*}
			H^{-1}(\Omega)= \nabla \cdot \nabla H_0^1(\Omega) \subset \nabla\cdot \boldsymbol{H}^{-2}(\div;\Omega).
		\end{equation*}
		This inclusion, together with the definition $\nabla\cdot \boldsymbol{H}^{-2}(\div;\Omega) \subset H^{-1}(\Omega)$, yields
		$$\nabla\cdot \boldsymbol{H}^{-2}(\div;\Omega) = H^{-1}(\Omega).$$
	\end{proof}
	Thus, we construct the commutative diagram     
	\[
	\begin{tikzcd}
		\boldsymbol{H}^{-1}(\Omega) \arrow[r,"{\nabla\times }"] 
		&\boldsymbol{H}^{-2}(\div;\Omega)\arrow[r,"{\nabla\cdot}"] 
		& H^{-1}(\Omega) \arrow[r] & 0 \\
		& \boldsymbol{V}_0(\Omega) \arrow[u,"I"] 
		& H_0^1(\Omega) \arrow[l,"{\nabla }"'] \arrow[u,"{\Delta}" ].
	\end{tikzcd}
	\]      
	By Lemma \ref{HelDec2}, we characterize the dual space $\boldsymbol{V}'(\Omega)$ as
	\begin{align}\label{DualVDec}
		\boldsymbol{V}'(\Omega)=\boldsymbol{H}^{-2}(\div;\Omega)=\nabla H_0^1(\Omega)\oplus \nabla\times \boldsymbol{H}^{-1}(\Omega) =\nabla H_0^1(\Omega) \oplus \boldsymbol{X}'(\Omega).
	\end{align}
	As established in \cite{Chen2018}, $J_{\boldsymbol{V}(\Omega)}$ acts as the identity operator on $\nabla H_0^1(\Omega)$. This implies that the decomposition \eqref{DualVDec} is orthogonal with respect to the inner product $(\cdot,\cdot)_{\boldsymbol{V}'(\Omega)}$, but not with respect to the $\boldsymbol{H}^{-l}$ inner product for $l=0,1,2$.
	\subsection{The trace complex}
	This subsection is devoted to defining the trace space of $\boldsymbol{V}(\Omega)$.
	Recall the classical trace operators \cite{Buffa2003} for a smooth scalar function $v: \Omega\to \mathbb{R}$ and a smooth vector field $\v:\Omega\to\mathbb{R}^3$:
	\begin{gather*}
		\gamma_0(v)(\boldsymbol{x}_0):=\lim_{\boldsymbol{x}\to \boldsymbol{x}_0} v(\boldsymbol{x}_0),\quad 
		\boldsymbol{\gamma}_0(\bu)(\boldsymbol{x}_0):=\lim_{\boldsymbol{x}\to\boldsymbol{x}_0}\v(\boldsymbol{x}),\\\boldsymbol{\gamma}_{\tau}(\v)(\boldsymbol{x}_0) := \lim_{\boldsymbol{x}\to\boldsymbol{x}_0} \boldsymbol{n}_{\boldsymbol{x}_0}\times\
		(\boldsymbol{v}(\boldsymbol{x})\times\boldsymbol{n}_{\boldsymbol{x}_0}),\quad
		\boldsymbol{\gamma}_{n}(\v)(\boldsymbol{x}_0):=\lim_{\boldsymbol{x}\to \boldsymbol{x}_0}\v(\boldsymbol{x})\cdot\boldsymbol{n}_{\boldsymbol{x}_0},
	\end{gather*}
	where $\boldsymbol{x}_0\in\Gamma$, $\boldsymbol{x}\in\Omega$, and $\boldsymbol{n}_{\boldsymbol{x}_0}$ denotes the unit outward normal to $\Gamma$ at  $\boldsymbol{x}_0$. It is known that for a Lipschitz domain $\Omega$, these trace operators admit continuous and surjective extensions to the corresponding Sobolev spaces in the weak setting:
	\begin{align}
		\label{Trace0}
		\gamma_0 &: H^s(\Omega) \rightarrow H^{s-1/2}(\Gamma) \text{ for } \frac{1}{2}<s<\frac{3}{2} \quad \text{\cite[Theorem 3.38]{McLean2000}},  \\
		\boldsymbol{\gamma}_{\tau} &: \mathbf{H}(\text{curl},\Omega) \rightarrow \mathbf{H}^{-1/2}(\text{curl}_{\Gamma}, \Gamma) \quad \text{\cite[Theorem 4.1]{Buffa2002}},  \\
		\boldsymbol{\gamma}_n &: \mathbf{H}(\text{div},\Omega) \rightarrow H^{-1/2}(\Gamma) \quad \text{\cite[Theorem 2.5, Corollary 2.8]{GiraultRaviart1986}}. 
	\end{align}
	The trace space $H^s(\Gamma)$ for $0 \le s \le 1$ is defined via localization and pullback under charts \cite[Chapter 3]{McLean2000}, with the definition extended by duality to $-1 \le s < 0$ via $H^{-s}(\Gamma)$.
	This approach, which utilizes local charts, also facilitates the definition of key surface differential operators. These include the surface gradient $\nabla_{\Gamma}$, the surface divergence $\nabla_{\Gamma}\cdot$, the surface vector curl $\overrightarrow{\nabla}_{\Gamma} \times$, and the surface scalar curl $\nabla_{\Gamma} \times$, as detailed in \cite{Buffa2002,Buffa2003}.
	The trace space of $\boldsymbol{H}(\curl;\Omega)$ is given by 
	\begin{equation*}
		\boldsymbol{H}^{-\frac{1}{2}}(\curl_{\Gamma};\Gamma) =\{\v\in \boldsymbol{H}_{\tau}^{-\frac{1}{2}}(\Gamma): \nabla_{\Gamma}\times\boldsymbol{v}\in H^{-\frac{1}{2}}(\Gamma)\},
	\end{equation*}
	where 
	\begin{equation*}
		\boldsymbol{H}_{\tau}^{-\frac{1}{2}}(\Gamma):=\{\v\in\boldsymbol{H}^{-\frac{1}{2}}(\Gamma): \boldsymbol{v}\cdot \boldsymbol{n}=0 \text{ almost everywhere on }\Gamma\}
	\end{equation*}
	denotes the dual of the range of the tangential trace operator applied to $\boldsymbol{H}^1(\Omega)$.
	The actions of these operators are summarized in the following commutative diagram:
	\begin{equation}\label{DeRhamTraceCom}
		\begin{tikzcd}
			& 
			H^1(\Omega) \arrow[r, "\nabla"] \arrow[d, "\gamma_0"] & 
			\boldsymbol{H}(\curl; \Omega)  \arrow[r, "\nabla \times"] \arrow[d, "\boldsymbol{\gamma}_{\tau}"] &
			\boldsymbol{H}( \div;\Omega) \arrow[d, "\boldsymbol{\gamma}_n"] & \\
			& 
			H^{\frac{1}{2}}(\Gamma) \arrow[r, "\nabla_{\Gamma}"] & 
			\boldsymbol{H}^{-\frac{1}{2}}(\operatorname{curl}_{\Gamma};\Gamma) \arrow[r, "\nabla_{\Gamma}\times "] & 
			H^{-\frac{1}{2}}(\Gamma) & 
		\end{tikzcd}
	\end{equation}
	On contractible Lipschitz domains, both the de Rham complex and its trace complex in \eqref{DeRhamTraceCom} are exact \cite{Arnold2018}.
	\par
	We begin by defining the trace space of $\boldsymbol{V}(\Omega)$. Consider the space
	\begin{equation*}
		\boldsymbol{H}^{-\frac{1}{2}}(\operatorname{grad-curl}_{\Gamma};\Gamma) =\{\v\in \boldsymbol{H}_{\tau}^{-\frac{1}{2}}(\Gamma): \nabla_{\Gamma}\times\boldsymbol{v}\in H^{\frac{1}{2}}(\Gamma)\}.
	\end{equation*}
	Following \cite[Theorem 4.1]{Buffa2002}, $\boldsymbol{\gamma}_{\tau}: \boldsymbol{V}(\Omega) \to  \boldsymbol{H}^{-\frac{1}{2}}(\operatorname{grad-curl}_{\Gamma};\Gamma)$ is continuous by 
	\begin{align*}
		\nabla_{\Gamma}\times \boldsymbol{\gamma}_{\tau}(\v) = (\nabla\times \boldsymbol{v})\cdot \boldsymbol{n} \in \boldsymbol{H}^{\frac{1}{2}}(\Gamma), \\   \|\nabla_{\Gamma}\times  \boldsymbol{\gamma}_{\tau}(\v)\|_{\frac{1}{2},\Gamma}\ \le \|\boldsymbol{\gamma}_0(\nabla\times \v)\|_{\frac{1}{2},\Gamma}\lesssim \|\v\|_{\boldsymbol{V}(\Omega)}
	\end{align*}
	Then, for all functions in $\boldsymbol{V}(\Omega)$, we define the continuous operator $\boldsymbol{\gamma}_{\curl}: \boldsymbol{V}(\Omega)\to \boldsymbol{H}^{\frac{1}{2}}(\Gamma)$
	\begin{equation*}
		\boldsymbol{\gamma}_{\curl}(\v):= (\nabla\times\v)|_{\Gamma}=\boldsymbol{\gamma}_0(\nabla\times\v).
	\end{equation*}
	The relationships between these trace operators are captured by the following commutative diagram:
	\begin{equation}\label{EnhaceDeRhamTraceCom}
		\begin{tikzcd}
			& 
			H^1(\Omega) \arrow[r, "\nabla"] \arrow[dd, "\gamma_0"'] & 
			\boldsymbol{V}( \Omega)  \arrow[r, "\nabla \times"] \arrow[dd, "\boldsymbol{\gamma}_{\tau}"'] \arrow[rd, dashed, "\boldsymbol{\gamma}_{\curl}"'] & 
			\boldsymbol{H}^1( \Omega) \arrow[d, "\boldsymbol{\gamma}_0"] \arrow[dd, bend left=50, dashed, "\gamma_{\boldsymbol{n}}"] & \\
			& & & \boldsymbol{H}^{\frac{1}{2}}(\Gamma) \arrow[d, "\cdot \boldsymbol{n}"] & \\
			& 
			H^{\frac{1}{2}}(\Gamma) \arrow[r, "\nabla_{\Gamma}"] & 
			\boldsymbol{H}^{-\frac{1}{2}}(\operatorname{grad-curl}_{\Gamma};\Gamma) \arrow[r, "\nabla_{\Gamma} \times"] & 
			H^{\frac{1}{2}}(\Gamma) &.
		\end{tikzcd}
	\end{equation}
	Note that for any $\boldsymbol{v} \in \boldsymbol{H}^{-\frac{1}{2}}(\operatorname{grad-curl}_{\Gamma};\Gamma) \cap \ker(\nabla_{\Gamma}\times)\subset \boldsymbol{H}^{-\frac{1}{2}}(\curl_{\Gamma};\Gamma)\cap \ker(\nabla_{\Gamma}\times)$, there exists $q \in H^{\frac{1}{2}}(\Gamma)$ such that $\nabla_{\Gamma} q = \boldsymbol{v}$, which implies that the trace complex in \eqref{EnhaceDeRhamTraceCom} is also exact. 
	Finally, we define the trace space of $\boldsymbol{V}(\Omega)$:
	\begin{equation*}
		\boldsymbol{Y}(\Gamma):=\{(\boldsymbol{h}, \boldsymbol{g})\in  \boldsymbol{H}^{-\frac{1}{2}}(\operatorname{grad-curl}_{\Gamma};\Gamma)\times \boldsymbol{H}^{\frac{1}{2}}(\Gamma): \nabla_{\Gamma}\times \boldsymbol{h}=\boldsymbol{g}\cdot \boldsymbol{n}\}.
	\end{equation*}
	Since $\Gamma$ is a closed connected surface, the surface Stokes theorem implies that for any $(\boldsymbol{h},\boldsymbol{g}) \in \boldsymbol{Y}(\Gamma)$,
	\begin{equation}\label{hgCom}
		\int_{\Gamma} \boldsymbol{g} \cdot \boldsymbol{n}\d S= \int_{\Gamma} \nabla_{\Gamma}\times \boldsymbol{h} \d S = 0.
	\end{equation}
	\begin{theorem}\label{SurjForV}
		On a contractible Lipschitz domain $\Omega$, the map $\v \to \{ \boldsymbol{\gamma}_{\tau}(\v), \boldsymbol{\gamma}_{\curl}(\v)\} : \boldsymbol{V}(\Omega) \to \boldsymbol{Y}(\Gamma)$ is continuous and surjective.
	\end{theorem}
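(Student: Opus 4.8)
The plan is to prove continuity first and then surjectivity, the latter being the substantive part.

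\medskip

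\noindent\textbf{Continuity.} I would observe that continuity is essentially already contained in the discussion preceding the statement. For $\v \in \boldsymbol{V}(\Omega)$, the bound $\|\boldsymbol{\gamma}_{\tau}(\v)\|_{\boldsymbol{H}^{-1/2}(\operatorname{curl}_\Gamma;\Gamma)} \lesssim \|\v\|_{\boldsymbol{H}(\curl;\Omega)} \le \|\v\|_{\boldsymbol{V}(\Omega)}$ comes from \cite[Theorem 4.1]{Buffa2002}, while $\|\nabla_\Gamma \times \boldsymbol{\gamma}_\tau(\v)\|_{1/2,\Gamma} = \|\boldsymbol{\gamma}_0(\nabla\times\v)\cdot\boldsymbol{n}\|_{1/2,\Gamma} \le \|\boldsymbol{\gamma}_0(\nabla\times\v)\|_{1/2,\Gamma} \lesssim \|\nabla\times\v\|_{1,\Omega}$ by the standard $\boldsymbol{H}^1$-trace theorem. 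Combining these gives $\|\boldsymbol{\gamma}_\tau(\v)\|_{\boldsymbol{H}^{-1/2}(\operatorname{grad-curl}_\Gamma;\Gamma)} \lesssim \|\v\|_{\boldsymbol{V}(\Omega)}$, and likewise $\|\boldsymbol{\gamma}_{\curl}(\v)\|_{1/2,\Gamma} \lesssim \|\v\|_{\boldsymbol{V}(\Omega)}$; the compatibility relation $\nabla_\Gamma \times \boldsymbol{\gamma}_\tau(\v) = \boldsymbol{\gamma}_{\curl}(\v)\cdot\boldsymbol{n}$ is exactly the identity from diagram \eqref{EnhaceDeRhamTraceCom}, so the image indeed lies in $\boldsymbol{Y}(\Gamma)$.

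\medskip

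\noindent\textbf{Surjectivity.} Given $(\boldsymbol{h},\boldsymbol{g}) \in \boldsymbol{Y}(\Gamma)$, I must construct $\v \in \boldsymbol{V}(\Omega)$ with $\boldsymbol{\gamma}_\tau(\v) = \boldsymbol{h}$ and $\boldsymbol{\gamma}_0(\nabla\times\v) = \boldsymbol{g}$. The natural route is a two-stage lifting that follows the complex. First, since $\boldsymbol{g} \in \boldsymbol{H}^{1/2}(\Gamma)$ with $\int_\Gamma \boldsymbol{g}\cdot\boldsymbol{n}\,\d S = 0$ by \eqref{hgCom}, and since $\boldsymbol{\gamma}_0: \boldsymbol{H}^1(\Omega)\to\boldsymbol{H}^{1/2}(\Gamma)$ is surjective, I would pick any $\boldsymbol{w}_0\in\boldsymbol{H}^1(\Omega)$ with $\boldsymbol{\gamma}_0(\boldsymbol{w}_0)=\boldsymbol{g}$. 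However $\boldsymbol{w}_0$ need not be a curl. To fix this I would correct it inside $\Omega$: because $\int_\Gamma \boldsymbol{g}\cdot\boldsymbol{n} = 0$, the function $\nabla\cdot\boldsymbol{w}_0 \in L^2(\Omega)$ has total integral zero, so by the exactness of the Stokes complex \eqref{StokesComplex} there is $\boldsymbol{z}\in\boldsymbol{H}^1_0(\Omega)$ with $\nabla\cdot\boldsymbol{z} = \nabla\cdot\boldsymbol{w}_0$; set $\boldsymbol{w} := \boldsymbol{w}_0 - \boldsymbol{z}$, which still satisfies $\boldsymbol{\gamma}_0(\boldsymbol{w})=\boldsymbol{g}$ and now has $\nabla\cdot\boldsymbol{w}=0$. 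By exactness of \eqref{StokesComplex} again, $\boldsymbol{w} = \nabla\times\v_0$ for some $\v_0$ in the $\boldsymbol{H}(\operatorname{grad-curl})$-conforming space, hence $\v_0\in\boldsymbol{V}(\Omega)$ and $\boldsymbol{\gamma}_{\curl}(\v_0)=\boldsymbol{\gamma}_0(\boldsymbol{w})=\boldsymbol{g}$. Second, I need to adjust the tangential trace without disturbing the curl. Compute $\boldsymbol{h} - \boldsymbol{\gamma}_\tau(\v_0) \in \boldsymbol{H}^{-1/2}(\operatorname{grad-curl}_\Gamma;\Gamma)$; its surface scalar curl is $\nabla_\Gamma\times\boldsymbol{h} - (\nabla\times\v_0)\cdot\boldsymbol{n} = \boldsymbol{g}\cdot\boldsymbol{n} - \boldsymbol{g}\cdot\boldsymbol{n} = 0$ (using the $\boldsymbol{Y}(\Gamma)$-compatibility of $(\boldsymbol{h},\boldsymbol{g})$ and of $(\boldsymbol{\gamma}_\tau(\v_0),\boldsymbol{g})$). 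By the exactness of the trace complex in \eqref{EnhaceDeRhamTraceCom} (the remark just before the theorem), there exists $q\in H^{1/2}(\Gamma)$ with $\nabla_\Gamma q = \boldsymbol{h}-\boldsymbol{\gamma}_\tau(\v_0)$; lift $q$ to $\tilde q\in H^1(\Omega)$ via surjectivity of $\gamma_0$, and set $\v := \v_0 + \nabla\tilde q$. Then $\nabla\times\v = \nabla\times\v_0$, so $\v\in\boldsymbol{V}(\Omega)$ and $\boldsymbol{\gamma}_{\curl}(\v)=\boldsymbol{g}$, while $\boldsymbol{\gamma}_\tau(\v) = \boldsymbol{\gamma}_\tau(\v_0) + \nabla_\Gamma q = \boldsymbol{h}$ by the commutativity $\boldsymbol{\gamma}_\tau\circ\nabla = \nabla_\Gamma\circ\gamma_0$ in \eqref{EnhaceDeRhamTraceCom}. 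Finally, tracking each lift's norm bound (each is continuous: the $\boldsymbol{H}^1$-trace right inverse, the Bogovskii-type solution operator for the divergence on the contractible domain, the right inverse of $\nabla\times$ on $\boldsymbol{H}(\operatorname{grad-curl})$, and the right inverse of $\gamma_0$) yields $\|\v\|_{\boldsymbol{V}(\Omega)}\lesssim \|(\boldsymbol{h},\boldsymbol{g})\|_{\boldsymbol{Y}(\Gamma)}$, giving a bounded right inverse and in particular surjectivity.

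\medskip

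\noindent\textbf{Main obstacle.} The delicate point is making the two corrections compatible — ensuring that fixing the tangential trace in the second stage does not spoil the curl trace achieved in the first. This is handled cleanly by correcting with a gradient $\nabla\tilde q$, since gradients are curl-free; the only thing to verify carefully is that the needed $q$ exists, which is precisely where the exactness of the enhanced trace complex \eqref{EnhaceDeRhamTraceCom} (specifically that every element of $\boldsymbol{H}^{-1/2}(\operatorname{grad-curl}_\Gamma;\Gamma)\cap\ker(\nabla_\Gamma\times)$ is a surface gradient) is used. A secondary subtlety is the compatibility condition $\int_\Gamma\boldsymbol{g}\cdot\boldsymbol{n}=0$, which is forced by membership in $\boldsymbol{Y}(\Gamma)$ via \eqref{hgCom} and is exactly what allows the divergence correction $\boldsymbol{z}$ to exist; I would make sure to invoke it at that step.
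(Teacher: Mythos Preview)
Your proof is correct and follows essentially the same two-stage strategy as the paper: first produce a $\boldsymbol{v}_0\in\boldsymbol{V}(\Omega)$ with $\boldsymbol{\gamma}_{\curl}(\boldsymbol{v}_0)=\boldsymbol{g}$, then correct the tangential trace by adding a gradient, using the exactness of the trace complex in \eqref{EnhaceDeRhamTraceCom}. The paper obtains the divergence-free $\boldsymbol{H}^1$ extension of $\boldsymbol{g}$ in one stroke by solving the non-homogeneous Stokes problem and citing \cite[Theorem~3.4]{GiraultRaviart1986} for the vector potential, whereas you build it by hand (trace lift plus Bogovski\u{\i}-type divergence correction); both routes are equivalent. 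One small slip to fix: in your second invocation of exactness you cite \eqref{StokesComplex}, but that complex carries \emph{homogeneous} boundary conditions and your $\boldsymbol{w}$ has nonzero trace $\boldsymbol{g}$; you need the exactness of the non-homogeneous Stokes complex \eqref{ContiComplex} (equivalently \eqref{Realscomplex} with $s=0$, or the Girault--Raviart result) at that step.
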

	\begin{proof}
		The continuity is immediate, as the continuous operators satisfy 
		\begin{equation*} 
			\nabla_{\Gamma}\times\boldsymbol{\gamma}_{\tau}(\boldsymbol{v})=\boldsymbol{\gamma}_{\curl}(\boldsymbol{v})\cdot\boldsymbol{n}
		\end{equation*} in view of the commutative diagram \eqref{EnhaceDeRhamTraceCom}.
		Let $(\boldsymbol{h},\boldsymbol{g})\in \boldsymbol{Y}(\Gamma)$. In view of the compatibility relation \eqref{hgCom}, we now consider the Stokes equations \eqref{StokesProblem} that take the non-homogeneous boundary conditions $\boldsymbol{u} = \boldsymbol{g}$ on $\Gamma$.
		Then, by \cite[Theorem 3.4]{GiraultRaviart1986}, there exists $\boldsymbol{\varphi}^{g}  \in \boldsymbol{V}(\Omega)$ satisfying 
		\begin{equation*}
			\nabla\times \boldsymbol{\varphi}^g=\boldsymbol{u} \quad \text{and} \quad \nabla\cdot \boldsymbol{\varphi}^g=0.
		\end{equation*}
		It follows from the definition of the  trace space $\boldsymbol{Y}(\Gamma)$ and the commutative diagram \eqref{EnhaceDeRhamTraceCom} that
		\begin{equation*}
			\nabla_{\Gamma}\times (\boldsymbol{h}-\boldsymbol{\gamma}_{\tau}(\boldsymbol{\varphi}^g))=\nabla_{\Gamma}\times \boldsymbol{h}-\boldsymbol{\gamma}_{\curl}(\boldsymbol{\varphi^g)}=\nabla_{\Gamma}\times \boldsymbol{h}-\boldsymbol{g}\cdot\boldsymbol{n}=0.
		\end{equation*}
		By the exactness of the trace complex in \eqref{EnhaceDeRhamTraceCom},  there exists $ q_0\in H^{\frac{1}{2}}(\Gamma)$ such that
		\begin{equation}\label{auxSurj}
			\nabla_{\Gamma} q_0 =  \boldsymbol{h}-\boldsymbol{\gamma}_{\tau}(\boldsymbol{\varphi}^g).
		\end{equation}
		The surjectivity of $\gamma_0$ ensures the existence of $q \in H^1(\Omega)$ satisfying $$\gamma_0(q)=q_0,$$
		which, combined with  the  commutativity  of \eqref{EnhaceDeRhamTraceCom} and \eqref{auxSurj}, yields $$\boldsymbol{\gamma}_{\tau}(\nabla q)=  \nabla_{\Gamma}\gamma_0(q)= \boldsymbol{h}-\boldsymbol{\gamma}_{\tau}(\boldsymbol{\varphi}^g). $$
		We therefore obtain $\boldsymbol{\varphi}^{\partial} = \nabla q + \boldsymbol{\varphi}^g \in \boldsymbol{V}(\Omega)$, satisfying the prescribed boundary conditions: 
		\begin{equation*}
			\boldsymbol{\gamma}_{\tau}(\boldsymbol{\boldsymbol{\varphi}}^{\partial})= \boldsymbol{\gamma}_{\tau}(\nabla q)+\boldsymbol{\gamma}_{\tau}(\boldsymbol{\varphi}^{g})=\boldsymbol{h} \text{ and }\boldsymbol{\gamma}_{\curl}(\boldsymbol{\varphi}^{\partial})=\boldsymbol{\gamma}_0(\nabla\times \boldsymbol{\varphi}^{\partial})=\boldsymbol{\gamma}_0(\nabla\times \boldsymbol{\varphi}^{g})=\boldsymbol{g}.
		\end{equation*}
		The proof is complete.
	\end{proof}
	
	\section{the grad-curl problem}
	With the preliminaries in place, we derive the grad-curl problem as the vector potential formulation of the Stokes problem \eqref{StokesProblem} and examine its connection to the quad-curl problem \eqref{primalForm} in this section. This derivation and analysis will be conducted for both homogeneous and non-homogeneous boundary cases.
	\subsection{Homogeneous boundary conditions}
	Let $\bu$ be the unique solution of the Stokes problem \eqref{StokesProblem}.
	As shown in \cite[Chapter I, section 5.3]{GiraultRaviart1986}, the velocity field $\bu \in \boldsymbol{H}_0^1(\Omega)$ can be expressed as 
	\begin{equation*}
		\bu=\nabla\times \boldsymbol{\psi}  \quad \text{with}\quad \nabla\cdot \boldsymbol{\psi}=0 \quad \text{in } \Omega.
	\end{equation*}
	We refer to the vector field $\boldsymbol{\psi}$  as the vector potential, which is uniquely determined to the biharmonic problem: find $\boldsymbol{\psi} \in \boldsymbol{\Psi}_0(\Omega)$ such that
	\begin{equation}\label{biharmonic problem}
		-\nu (\Delta \boldsymbol{\psi}, \Delta \boldsymbol{\phi}) = \langle\boldsymbol{f},\nabla\times \boldsymbol{\phi}\rangle, \quad \forall \boldsymbol{\phi} \in \boldsymbol{\Psi}_0(\Omega),
	\end{equation}
	where
	\begin{equation}\label{divH1space}
		\boldsymbol{\Psi}_0(\Omega):=\{\boldsymbol{v} \in\boldsymbol{V}_0(\Omega): \nabla\cdot \boldsymbol{v} \in H^1(\Omega) \text{ and } \int_{\Gamma} \boldsymbol{v} \cdot\boldsymbol{n} \d S =0\}.
	\end{equation}
	In fact,
	testing the first equation of \eqref{StokesProblem} with $\nabla \times \boldsymbol{\phi}$ for $\boldsymbol{\phi}\in \boldsymbol{\Psi}_0(\Omega)$ gives
	\begin{equation}\label{firstVar}
		-\nu\langle\Delta \nabla\times \boldsymbol{\psi},\nabla\times \boldsymbol{\phi}\rangle = \langle\boldsymbol{f},\nabla\times \boldsymbol{\phi}\rangle.
	\end{equation}
	Using the vector calculus identity
	\begin{equation}\label{vector calculus identity}
		\nabla\times \nabla \times \boldsymbol{\v} = -\Delta \boldsymbol{\v} +\nabla
		\nabla\cdot \boldsymbol{\v},
	\end{equation}
	the left-hand side of \eqref{firstVar} can be rewritten as
	\begin{equation}\label{variation}
		-\nu\langle\Delta \nabla\times \boldsymbol{\psi},
		\nabla \times \boldsymbol{\phi}\rangle =\nu(\nabla\times \nabla\times \boldsymbol{\psi},\nabla\times\nabla\times \boldsymbol{\phi})= \nu(\Delta \boldsymbol{\psi}, \Delta \boldsymbol{\phi}),
	\end{equation}
	which leads to \eqref{biharmonic problem}.
	This approach entirely relaxes the divergence-free condition, in exchange for imposing the extra divergence regularity $\nabla\cdot \boldsymbol{\psi} \in H^1(\Omega)$.  A major challenge in this setting is the construction of discrete subspaces that satisfies such strong continuity constraints.
	\par 
	Alternatively, we explicitly enforce the weak divergence-free condition by requiring $\boldsymbol{\psi} \in \boldsymbol{X}(\Omega)$. This requirement is consistent because any $\boldsymbol{\psi} \in \boldsymbol{\Psi}_0(\Omega)$ with $\nabla\cdot \boldsymbol{\psi}=0$ already lies in $\boldsymbol{X}(\Omega)$.
	Then the new vector potential formulation \eqref{GradCurlVar} can be derived by
	\begin{equation}\label{gradcurlVar}
		\nu(\nabla \nabla\times \boldsymbol{\psi}, \nabla\nabla\times \boldsymbol{\phi})=-\nu\langle\Delta \nabla \times \boldsymbol{\psi},\nabla\times  \boldsymbol{\phi}\rangle =\langle \boldsymbol{f},\nabla\times \boldsymbol{\phi}\rangle, \quad \forall \boldsymbol{\varphi}\in \boldsymbol{X}(\Omega).
	\end{equation}
	Hence, we have established  that the vector potential $\boldsymbol{\psi}$ is a solution of \eqref{gradcurlVar}.
	We identify this formulation as the grad-curl problem. Introducing a Lagrange multiplier, we give the following definition, which will be the target of our numerical discretization.
	\begin{definition}
		Given $\boldsymbol{f}\in \boldsymbol{H}^{s-1}(\Omega)$ with $s\ge 0$, find $(\boldsymbol{\psi},\lambda) \in \boldsymbol{V}_0(\Omega)\times H_0^1(\Omega)$ satisfies
		\begin{equation}\label{VectorPotentialProblem}
			\begin{aligned}
				(\nabla\nabla\times \boldsymbol{\psi},\nabla\nabla\times \boldsymbol{\phi})+(\nabla\lambda,\boldsymbol{\phi})&= \frac{1}{\nu} \langle\boldsymbol{f},\nabla\times \boldsymbol{\phi}\rangle,\quad \forall \boldsymbol{\phi}\in \boldsymbol{V}_0(\Omega),\\
				(\boldsymbol{\psi},\nabla q) &=0,\quad \forall q\in H_0^1(\Omega).
			\end{aligned}
		\end{equation}
	\end{definition}
	The Lagrange multiplier $\lambda$ vanishes identically, established by substituting 
	$\boldsymbol{\phi} = \nabla \lambda$
	into the first equation of \eqref{VectorPotentialProblem} and using the Poincaré inequality.
	\begin{remark}\label{Regu}
		The inf-sup condition holds:
		\begin{equation*}
			\sup_{\v\in\boldsymbol{V}_0(\Omega)/\{0\}}\frac{(\v,\nabla \lambda)}{\|\v\|_{\boldsymbol{V}(\Omega)}}\ge \frac{(\nabla\lambda,\nabla\lambda)}{\|\nabla\lambda\|} \gtrsim C_P \|\lambda\|_1,
		\end{equation*}
		where $C_p>0$ depends on the Poincaré inequality.  Combined with the Friedrichs inequality \eqref{FreIneq}, the grad-curl problem \eqref{VectorPotentialProblem} has a unique solution $\boldsymbol{\psi} \in \boldsymbol{X}(\Omega)$ satisfying 
		\begin{equation}
			\|\boldsymbol{\psi}\|_{s} \le C_F \|\nabla\times \boldsymbol{\psi}\|\le C_F C_P |\nabla\times\boldsymbol{\psi}|_1 \lesssim C_F C_P \frac{1}{\nu} \|\boldsymbol{f}\|_{-1}.
		\end{equation}
		Furthermore,
		$\nabla\times \boldsymbol{\psi} = \bu $ is the unique solution of the Stokes problem \eqref{StokesProblem} with $\boldsymbol{f}\in \boldsymbol{H}^{s-1}(\Omega)$. The regularity result of the Stokes problem  established in \cite{Dauge1989} applies: there exists $s>\frac{1}{2}$, such that 
		\begin{equation*}
			\nabla \times \boldsymbol{\psi} \in \boldsymbol{H}^{s+1}(\Omega).
		\end{equation*}
		Applying the integration by parts to the right-hand side of \eqref{VectorPotentialProblem} and employing the density argument, we obtain the following equivalent form:
		\begin{equation}\label{curlDeltacurlProblem2}
			-\nu\nabla\times \Delta\nabla\times \boldsymbol{\psi}= \nabla\times \boldsymbol{f} \text{ in } \nabla\times \boldsymbol{H}^{s-1}(\Omega).
		\end{equation}
	\end{remark}
	On the other hand, the weak form of the quad-curl problem \eqref{primalForm}  reads: given $\boldsymbol{j}\in\boldsymbol{H}^{s-2}(\Omega)\cap\ker(\nabla\cdot)$ with $s\ge 0$,
	find $\boldsymbol{\psi}\in \boldsymbol{H}_0(\curl^2;\Omega)\cap\boldsymbol{H}^0(\div;\Omega)$ such that 
	\begin{equation}\label{curl4var}
		(\nabla\times \nabla\times\boldsymbol{\psi},\nabla\times \nabla\times \boldsymbol{\phi})=\langle\boldsymbol{j},\boldsymbol{\phi}\rangle,
	\end{equation}
	where
	\begin{equation*}
		\boldsymbol{H}_0(\curl^2;\Omega):=\{\v\in\boldsymbol{H}_0(\curl;\Omega);\nabla\times\v\in\boldsymbol{H}_0(\curl;\Omega)\}.
	\end{equation*}
	From  \cite[Theorem 2.5]{Amrouche1998}, on a Lipschitz domain $\Omega$, there holds $$\boldsymbol{H}_0(\curl;\Omega)\cap \boldsymbol{H}_0(\div;\Omega) =\boldsymbol{H}_0^1(\Omega),$$  which, combined with $\nabla\times \boldsymbol{H}_0(\curl;\Omega)\subset\boldsymbol{H}_0(\div;\Omega)$, yields $$\boldsymbol{V}_0(\Omega)=\boldsymbol{H}_0(\curl^2;\Omega).$$ Consequently, by applying the identity \eqref{vector calculus identity}, the weak form \eqref{curl4var} can be reformulated as seeking $\boldsymbol{\psi} \in \boldsymbol{X}(\Omega)$ such that
	\begin{equation}\label{gradcurlproblem}
		(\nabla\nabla\times \boldsymbol{\psi},\nabla\nabla\times\boldsymbol{\phi})=\langle\boldsymbol{j},\boldsymbol{\phi}\rangle,
	\end{equation}
	which is equivalent to the $-$curl$\Delta$curl problem:
	\begin{equation}\label{curlDeltacurlProblem1}
		-\nabla\times \Delta\nabla\times \boldsymbol{\psi} = \boldsymbol{j} \text{ in } \boldsymbol{H}^{s-2}(\Omega)\cap\ker(\nabla\cdot).
	\end{equation}
	In view of \eqref{Xdual}, the source term in the least regular case ($s=0$) satisfies $\boldsymbol{j} \in \boldsymbol{H}^{-2}(\Omega) \cap \ker(\nabla\cdot) = \nabla\times \boldsymbol{H}^{-1}(\Omega) = \boldsymbol{X}'(\Omega)$, which implies the well-posedness of the problem \eqref{curlDeltacurlProblem1}. Furthermore, the Friedrichs inequality \eqref{FreIneq} and the Poincaré inequality yield the estimate:
	\begin{equation*}
		\|\boldsymbol{\psi}\|_s \le C_F \|\nabla\times \boldsymbol{\psi}\| \le C_F C_P |\nabla\times \boldsymbol{\psi}|_1 \lesssim C_F C_{P} \|\boldsymbol{j}\|_{-2}.
	\end{equation*}
	In contrast, for smoother data, specifically, when $s=1$ with $\boldsymbol{j} \in \boldsymbol{H}^{-1}(\Omega) \cap \ker(\nabla\cdot)$, and when $s=2$ with $\boldsymbol{j} \in \boldsymbol{H}^0(\div;\Omega)$ the quad-curl problem \eqref{primalForm} can be decoupled into two Maxwell systems and a Stokes system; see \cite{Chen2018,HuangXHQuadCurl}.
	\begin{remark}\label{jequCurlf}
		The exactness of the complex \eqref{Realscomplex} ensures that the right-hand sides of problems \eqref{curlDeltacurlProblem1} and \eqref{curlDeltacurlProblem2} are compatible. It follows that for any $\boldsymbol{j} \in \boldsymbol{H}^{s-2}(\Omega) \cap \ker(\nabla \cdot)$, one can find $\boldsymbol{f}_0 \in \boldsymbol{H}^{s-1}(\Omega)$ satisfying $\boldsymbol{j} = \nabla \times \boldsymbol{f}_0$. Therefore, the grad-curl problem \eqref{VectorPotentialProblem} and the quad-curl problem \eqref{primalForm} are equivalent under the relation $\boldsymbol{j} = \frac{1}{\nu} \nabla \times \boldsymbol{f}$.
	\end{remark}
	Then we have proved the following theorem.
	\begin{theorem}\label{homStokesThe}
		Assume that $\Omega$ is a contractible Lipschitz domain.
		If $\boldsymbol{j}=\frac{1}{\nu}\nabla\times \boldsymbol{f}$ in $\boldsymbol{H}^{-2}(\Omega)\cap\ker(\nabla\cdot)$, 
		the quad-curl problem \eqref{primalForm} has a unique solution $\boldsymbol{\psi}$, whose curl $\nabla\times \boldsymbol{\psi}$ solves the Stokes problem \eqref{StokesProblem} uniquely.
	\end{theorem}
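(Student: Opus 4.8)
The plan is to read the statement off from the two facts already recorded in Remark~\ref{Regu} and Remark~\ref{jequCurlf}, together with the weak formulation \eqref{gradcurlproblem}, the equivalent vector-potential formulation \eqref{VectorPotentialProblem}, and the duality identity \eqref{Xdual}.

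First I would settle the existence and uniqueness of $\boldsymbol{\psi}$. Since $\boldsymbol{j}=\frac1\nu\nabla\times\boldsymbol{f}\in\boldsymbol{H}^{-2}(\Omega)\cap\ker(\nabla\cdot)$, identity \eqref{Xdual} gives $\boldsymbol{j}\in\boldsymbol{X}'(\Omega)$. The bilinear form $(\nabla\nabla\times\cdot,\nabla\nabla\times\cdot)$ is bounded on $\boldsymbol{X}(\Omega)$, and coercive there by chaining the Friedrichs inequality \eqref{FreIneq} with the Poincar\'e inequality (the estimate displayed right after \eqref{curlDeltacurlProblem1}); Lax--Milgram then produces a unique $\boldsymbol{\psi}\in\boldsymbol{X}(\Omega)$ solving the weak form \eqref{gradcurlproblem}, equivalently the $-\curl\Delta\curl$ equation \eqref{curlDeltacurlProblem1}. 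Because $\boldsymbol{V}_0(\Omega)=\boldsymbol{H}_0(\curl^2;\Omega)$ and $\boldsymbol{X}(\Omega)=\boldsymbol{V}_0(\Omega)\cap\boldsymbol{H}(\div^0;\Omega)$, this $\boldsymbol{\psi}$ is exactly the unique solution of the quad-curl problem \eqref{primalForm}.

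Next I would feed in the substitution $\boldsymbol{j}=\frac1\nu\nabla\times\boldsymbol{f}$. By Remark~\ref{jequCurlf} the same $\boldsymbol{\psi}$ solves the grad-curl problem \eqref{VectorPotentialProblem} with data $\boldsymbol{f}$; the only point to check is that replacing the $\boldsymbol{f}_0$ of Remark~\ref{jequCurlf} by $\frac1\nu\boldsymbol{f}$ (both having curl $\nu\boldsymbol{j}$, hence differing by a gradient, which annihilates $\nabla\times\boldsymbol{\phi}$ for all $\boldsymbol{\phi}\in\boldsymbol{V}_0(\Omega)$) does not change the right-hand functional. Remark~\ref{Regu} then gives directly that $\boldsymbol{u}:=\nabla\times\boldsymbol{\psi}$ is the unique solution of the Stokes problem \eqref{StokesProblem}. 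If one prefers to argue by hand: from the equivalent form \eqref{curlDeltacurlProblem2} together with the identity \eqref{vector calculus identity} and $\nabla\cdot\boldsymbol{u}=\nabla\cdot\nabla\times\boldsymbol{\psi}=0$ one gets $\nabla\times(-\nu\Delta\boldsymbol{u}-\boldsymbol{f})=0$; since $\boldsymbol{\psi}\in\boldsymbol{X}(\Omega)\subset\boldsymbol{V}_0(\Omega)$, exactness of the Stokes complex \eqref{StokesComplex} gives $\boldsymbol{u}\in\boldsymbol{H}_0^1(\Omega)$, and exactness of the dual sequence \eqref{DualShortStokes} recovers a pressure $p\in L_0^2(\Omega)$ with $-\nu\Delta\boldsymbol{u}+\nabla p=\boldsymbol{f}$; the uniqueness of $\boldsymbol{u}$ is the classical well-posedness of \eqref{StokesProblem}.

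The hard part is purely the regularity bookkeeping: one must keep track of the precise negative-order spaces in which \eqref{curlDeltacurlProblem2} is an identity, so that testing against $\nabla\times\boldsymbol{\phi}$ with $\boldsymbol{\phi}\in\boldsymbol{V}_0(\Omega)$ is legitimate, and one must quote the exactness of the dual de Rham / Stokes complexes in exactly the form given by \eqref{DualShortStokes} and \eqref{Realscomplex}. Everything else is an immediate consequence of the preceding two remarks.
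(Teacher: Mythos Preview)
Your proposal is correct and follows essentially the same route as the paper: the paper's ``proof'' is simply the sentence ``Then we have proved the following theorem,'' meaning that the theorem is read off from the preceding discussion, chiefly Remark~\ref{Regu} and Remark~\ref{jequCurlf}, exactly as you do. Your optional ``by hand'' recovery of the pressure via the exact dual sequence \eqref{DualShortStokes} is a nice supplement but not needed beyond what Remark~\ref{Regu} already asserts.
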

	\begin{remark}
		To conclude, the relationships among these problems are formalized through the dual space decompositions in \eqref{H1DualDec} and \eqref{DualVDec}. From this perspective, the primal Stokes problem \eqref{StokesProblem} corresponds to the decomposition
		\begin{equation*}
			\nabla L_0^2(\Omega)\oplus \Delta (\boldsymbol{H}_0^1(\Omega)\cap\ker(\nabla\cdot))= \boldsymbol{H}^{-1}(\Omega) 
		\end{equation*}
		Applying the curl operator to both sides of this identity and using the relation $\nabla \times \boldsymbol{X}(\Omega) = \boldsymbol{H}_0^1(\Omega) \cap \ker(\nabla\cdot)$, we obtain the grad-curl problem \eqref{VectorPotentialProblem}:
		\begin{equation*}
			\nabla\times(\Delta \nabla\times\boldsymbol{X}(\Omega)) =\nabla\times \boldsymbol{H}^{-1}(\Omega)
		\end{equation*}
		Given data in $\boldsymbol{H}^{-2}(\Omega) \cap \ker(\nabla\cdot)$ that is not explicitly expressed as $\nabla \times \boldsymbol{H}^{-1}(\Omega)$, the corresponding formulation is the quad-curl problem \eqref{primalForm}:
		\begin{equation*}
			(\nabla\times)^4((\boldsymbol{H}_0(\curl^2;\Omega)\cap\boldsymbol{H}^0(\div;\Omega))=-\nabla \times \Delta\nabla\times \boldsymbol{X}(\Omega)=\boldsymbol{X}'(\Omega) =\boldsymbol{H}^{-2}(\Omega)\cap\ker(\nabla\cdot)
		\end{equation*}
		Furthermore, by incorporating a non-zero divergence in \eqref{primalForm},  we arrive at the extended decomposition:
		\begin{equation*}
			(\nabla\times)^4(\boldsymbol{H}_0(\curl^2;\Omega))\oplus\nabla H_0^1(\Omega)=-\nabla \times \Delta\nabla\times \boldsymbol{V}_0(\Omega)\oplus\nabla H_0^1(\Omega)=\boldsymbol{V}'(\Omega).
		\end{equation*}
	\end{remark}
	\begin{remark}
		The above analysis can be generalized to domains with non-trivial topology (meaning that $\Omega$ is not necessarily simply connected, and its boundary $\Gamma$ is not necessarily connected). In such settings, the divergence-free solutions $\boldsymbol{\psi}$ and $\boldsymbol{u}$ are determined only up to the finite-dimensional harmonic function spaces arising from the topology. For further details and related numerical treatments, we refer to \cite{Amrouche1998,BrennerCavanaughSung2024}.
	\end{remark}
	
	\subsection{Non-homogeneous boundary conditions}
	This subsection further considers the vector potential problem of the Stokes problem with non-homogeneous boundary conditions, which will help us to construct the $\boldsymbol{H}(\operatorname{grad-curl})$-conforming virtual element space to satisfy the associated discrete complex.
	\par 
	Let $\bu^{\text{non}}$ be the unique solution of the Stokes problem \eqref{StokesProblem} with non-homogeneous boundary conditions 
	\begin{equation}\label{NonhomCond}
		\bu^{\text{non}}= \boldsymbol{g} \quad\text{on } \boldsymbol{H}^{\frac{1}{2}}(\Gamma),
	\end{equation}
	which satisfies the compatibility relation $$\int_{\Gamma} \boldsymbol{g} \cdot \boldsymbol{n}\d S=\int_\Omega \nabla\cdot \bu^{\text{non}} \d \Omega= 0.$$
	We now present the definition of the grad-curl problem with non-homogeneous boundary conditions and nonzero divergence. 
	\begin{definition}
		Given $j_d \in H^{-1}(\Omega)$, $\boldsymbol{f}\in \boldsymbol{H}^{-1}(\Omega)$, and $(\boldsymbol{h},\boldsymbol{g})\in\boldsymbol{Y}(\Gamma)$, find  $\boldsymbol{\psi}^{\text{non}} \in \boldsymbol{V}(\Omega)$ with $(\boldsymbol{\gamma}_{\tau}(\boldsymbol{\psi}^{\text{non}}),\boldsymbol{\gamma}_{\curl}(\boldsymbol{\psi}^{\text{non}}))=(\boldsymbol{h},\boldsymbol{g})$ such that 
		\begin{equation}\label{NonhomGradcurl}
			\begin{aligned}
				\nu(\nabla\nabla\times \boldsymbol{\psi}^{\text{non}},\nabla\nabla\times\boldsymbol{\phi})&=(\boldsymbol{f},\nabla\times\boldsymbol{\phi)},\quad \forall \boldsymbol{\phi}\in\boldsymbol{V}_0(\Omega),\\  (\boldsymbol{\psi}^{\text{non}},\nabla q)&=(j_d, q), \quad \forall q\in H_0^1(\Omega).
			\end{aligned}
		\end{equation}
	\end{definition}
	\begin{theorem}\label{NonHomThem}
		Assume that $\Omega$ is a contractible Lipschitz domain.
		The grad-curl problem \eqref{NonhomGradcurl}
		has a unique solution $\boldsymbol{\psi}^{\text{non}} \in \boldsymbol{V}(\Omega)$ satisfying
		\begin{equation}\label{NonhomEsti}
			\|\boldsymbol{\psi}^{\text{non}}\|_{\boldsymbol{V}(\Omega)}\lesssim \|j_d\|_{-1}+ \frac{1}{\nu}\|\boldsymbol{f}\|_{-1} +\|\boldsymbol{h}\|_{-\frac{1}{2},\Gamma} +\|\boldsymbol{g}\|_{\frac{1}{2},\Gamma}.
		\end{equation}
		Furthermore, $\nabla\times \boldsymbol{\psi}^{\text{non}}$ is the unique solution of the Stokes problem \eqref{StokesProblem} with the non-homogeneous boundary conditions \eqref{NonhomCond}.
	\end{theorem}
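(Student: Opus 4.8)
The plan is to reduce the inhomogeneous problem \eqref{NonhomGradcurl} to the homogeneous saddle-point problem \eqref{VectorPotentialProblem}, whose analysis is already recorded in Remark \ref{Regu}, by peeling off a boundary lift provided by Theorem \ref{SurjForV} together with a gradient field that carries the prescribed divergence, and then to identify $\nabla\times\boldsymbol{\psi}^{\text{non}}$ with the Stokes velocity by rewriting the first equation of \eqref{NonhomGradcurl} as the divergence-free weak formulation of \eqref{StokesProblem}.

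For the reduction I would first invoke Theorem \ref{SurjForV}: since $(\boldsymbol{h},\boldsymbol{g})\in\boldsymbol{Y}(\Gamma)$, the trace map $\boldsymbol{v}\mapsto(\boldsymbol{\gamma}_{\tau}(\boldsymbol{v}),\boldsymbol{\gamma}_{\curl}(\boldsymbol{v}))$ from $\boldsymbol{V}(\Omega)$ onto $\boldsymbol{Y}(\Gamma)$ is continuous and surjective, so by the open mapping theorem it admits a bounded right inverse, giving $\boldsymbol{\varphi}^{\partial}\in\boldsymbol{V}(\Omega)$ with $\boldsymbol{\gamma}_{\tau}(\boldsymbol{\varphi}^{\partial})=\boldsymbol{h}$, $\boldsymbol{\gamma}_{\curl}(\boldsymbol{\varphi}^{\partial})=\boldsymbol{g}$ and $\|\boldsymbol{\varphi}^{\partial}\|_{\boldsymbol{V}(\Omega)}\lesssim\|\boldsymbol{h}\|_{-\frac{1}{2},\Gamma}+\|\boldsymbol{g}\|_{\frac{1}{2},\Gamma}$. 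Next I take $\theta\in H_0^1(\Omega)$ to be the Lax--Milgram solution of $(\nabla\theta,\nabla q)=(j_d,q)$ for all $q\in H_0^1(\Omega)$, so that $\|\theta\|_1\lesssim\|j_d\|_{-1}$; since $\nabla\times\nabla\theta=0$ we have $\nabla\theta\in\boldsymbol{V}_0(\Omega)$ with $\boldsymbol{\gamma}_{\tau}(\nabla\theta)=\boldsymbol{\gamma}_{\curl}(\nabla\theta)=0$ and $\|\nabla\theta\|_{\boldsymbol{V}(\Omega)}=\|\nabla\theta\|\lesssim\|j_d\|_{-1}$. Writing $\boldsymbol{\psi}^{\text{non}}=\nabla\theta+\boldsymbol{\varphi}^{\partial}+\boldsymbol{\chi}_0$ with unknown $\boldsymbol{\chi}_0\in\boldsymbol{V}_0(\Omega)$ and adjoining a multiplier $\lambda\in H_0^1(\Omega)$ exactly as in \eqref{VectorPotentialProblem}, the system \eqref{NonhomGradcurl} is equivalent to finding $(\boldsymbol{\chi}_0,\lambda)\in\boldsymbol{V}_0(\Omega)\times H_0^1(\Omega)$ with $(\nabla\nabla\times\boldsymbol{\chi}_0,\nabla\nabla\times\boldsymbol{\phi})+(\nabla\lambda,\boldsymbol{\phi})=\frac{1}{\nu}(\boldsymbol{f},\nabla\times\boldsymbol{\phi})-(\nabla\nabla\times\boldsymbol{\varphi}^{\partial},\nabla\nabla\times\boldsymbol{\phi})$ for all $\boldsymbol{\phi}\in\boldsymbol{V}_0(\Omega)$, together with $(\boldsymbol{\chi}_0,\nabla q)=-(\boldsymbol{\varphi}^{\partial},\nabla q)$ for all $q\in H_0^1(\Omega)$.

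This reduced saddle-point problem is well posed by Brezzi's theorem: the form $(\nabla\nabla\times\cdot,\nabla\nabla\times\cdot)$ is bounded on $\boldsymbol{V}(\Omega)$, and on the constraint kernel $\boldsymbol{X}(\Omega)$ it is coercive in $\|\cdot\|_{\boldsymbol{V}(\Omega)}$ because $\|\boldsymbol{\phi}\|_0\le\|\boldsymbol{\phi}\|_{s}\le C_F\|\nabla\times\boldsymbol{\phi}\|$ by \eqref{FreIneq} and $\|\nabla\times\boldsymbol{\phi}\|_1\lesssim|\nabla\times\boldsymbol{\phi}|_1=\|\nabla\nabla\times\boldsymbol{\phi}\|$ by the Poincaré inequality, valid since $\nabla\times\boldsymbol{\phi}\in\boldsymbol{H}_0^1(\Omega)$; the inf-sup condition for $(\cdot,\nabla q)$ is the one displayed in Remark \ref{Regu}. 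This yields a unique $(\boldsymbol{\chi}_0,\lambda)$ with $\|\boldsymbol{\chi}_0\|_{\boldsymbol{V}(\Omega)}+\|\lambda\|_1\lesssim\frac{1}{\nu}\|\boldsymbol{f}\|_{-1}+\|\boldsymbol{\varphi}^{\partial}\|_{\boldsymbol{V}(\Omega)}$, and testing the first equation with $\boldsymbol{\phi}=\nabla\lambda\in\boldsymbol{V}_0(\Omega)$ (using $\nabla\times\nabla\lambda=0$) forces $\|\nabla\lambda\|^2=0$, hence $\lambda=0$, so $\boldsymbol{\psi}^{\text{non}}=\nabla\theta+\boldsymbol{\varphi}^{\partial}+\boldsymbol{\chi}_0\in\boldsymbol{V}(\Omega)$ solves \eqref{NonhomGradcurl}; summing the three norm bounds produces \eqref{NonhomEsti}. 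Uniqueness follows since the difference $\boldsymbol{\delta}$ of two solutions lies in $\boldsymbol{X}(\Omega)$, satisfies $\|\nabla\nabla\times\boldsymbol{\delta}\|^2=0$ upon testing with itself, and then vanishes by the same coercivity estimate. Finally, $\bu:=\nabla\times\boldsymbol{\psi}^{\text{non}}\in\boldsymbol{H}^1(\Omega)$ is divergence-free and satisfies $\bu|_{\Gamma}=\boldsymbol{\gamma}_0(\nabla\times\boldsymbol{\psi}^{\text{non}})=\boldsymbol{\gamma}_{\curl}(\boldsymbol{\psi}^{\text{non}})=\boldsymbol{g}$; since $\nabla\times:\boldsymbol{V}_0(\Omega)\to\boldsymbol{H}_0^1(\Omega)\cap\ker(\nabla\cdot)$ is onto by the exactness of \eqref{StokesComplex}, the first equation of \eqref{NonhomGradcurl} reads $\nu(\nabla\bu,\nabla\boldsymbol{w})=(\boldsymbol{f},\boldsymbol{w})$ for every divergence-free $\boldsymbol{w}\in\boldsymbol{H}_0^1(\Omega)$, which is exactly the divergence-free weak formulation of \eqref{StokesProblem} with the boundary data \eqref{NonhomCond}; its velocity component being unique, we conclude $\bu=\bu^{\text{non}}$.

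I expect the main obstacle to be the careful bookkeeping in the reduction rather than any deep analytic point: one has to check that $\nabla\theta$ and $\nabla\lambda$ genuinely lie in $\boldsymbol{V}_0(\Omega)$ (so that they neither perturb the prescribed traces nor leave the test space), and that the lift furnished by Theorem \ref{SurjForV} carries the asserted operator-norm bound, while the only substantive analytic ingredient, coercivity of $(\nabla\nabla\times\cdot,\nabla\nabla\times\cdot)$ on $\boldsymbol{X}(\Omega)$, is already supplied by the Friedrichs inequality \eqref{FreIneq}.
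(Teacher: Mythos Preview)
Your proposal is correct and follows essentially the same route as the paper: lift the boundary data via Theorem \ref{SurjForV}, absorb the divergence datum into a gradient of an $H_0^1$ scalar, solve the remaining homogeneous grad-curl piece using the Friedrichs inequality \eqref{FreIneq} for coercivity, and identify $\nabla\times\boldsymbol{\psi}^{\text{non}}$ with the Stokes velocity through the exactness of \eqref{StokesComplex}. The only cosmetic difference is that the paper splits the homogeneous remainder directly as $\boldsymbol{\psi}^0\in\boldsymbol{X}(\Omega)$ plus $\nabla\varphi$ with $\varphi\in H_0^1(\Omega)$ (two decoupled Lax--Milgram problems), whereas you keep them coupled in a Brezzi saddle-point system on $\boldsymbol{V}_0(\Omega)\times H_0^1(\Omega)$ and then observe $\lambda=0$; the underlying estimates and ingredients are identical.
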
	
	\begin{proof}
		By Theorem \ref{SurjForV}, there exists $\boldsymbol{\psi}^{\partial}\in \boldsymbol{V}(\Omega)$ with
		$\boldsymbol{\gamma}_{\tau}(\boldsymbol{\psi}^{\partial})=\boldsymbol{h}$, $ \boldsymbol{\gamma}_{\curl}(\boldsymbol{\psi}^{\partial})=\boldsymbol{g},$ 
		and
		\begin{equation}
			\|\boldsymbol{\psi}^{\partial}\|_{\boldsymbol{V}(\Omega)}\lesssim \|\boldsymbol{h}\|_{-\frac{1}{2},\Gamma} + \|\boldsymbol{g}\|_{\frac{1}{2},\Gamma}.
		\end{equation}
		Then the problem \eqref{NonhomGradcurl} is equivalent to finding $\boldsymbol{\psi}^0 \in \boldsymbol{X}(\Omega)$ and $\varphi \in H_0^1(\Omega)$ such that:
		\begin{equation}\label{problem1}
			(\nabla\nabla\times \boldsymbol{\psi}^0,\nabla\nabla\times \boldsymbol{\phi})= \frac{1}{\nu}(\boldsymbol{f},\nabla\times\boldsymbol{\phi)}-(\nabla\nabla\times \boldsymbol{\psi}^{\partial},\nabla\nabla\times\boldsymbol{\phi}),\quad \forall\boldsymbol{\phi}\in\boldsymbol{X}(\Omega),
		\end{equation}
		and
		\begin{equation}\label{problem2}
			(\nabla \varphi ,\nabla q)=(j_d ,q)-(\boldsymbol{\psi}^{\partial},\nabla q),\quad \forall q\in H_0^1(\Omega).
		\end{equation}
		The  Friedrichs inequality \eqref{FreIneq} and the Poincaré inequality imply that
		\begin{equation*}
			\|\boldsymbol{\psi}^0\|_{\boldsymbol{V}(\Omega})\lesssim |\nabla\times \boldsymbol{\psi}^0|_{1} \lesssim \frac{1}{\nu}\|\boldsymbol{f}\|_{-1} + |\nabla\times\boldsymbol{\psi}^{\partial}|_{1},
		\end{equation*}
		and 
		\begin{equation*}
			\|\nabla \phi\|_{\boldsymbol{V}(\Omega)}=\|\nabla\phi\|\lesssim\|j_d\|_{-1} + \|\boldsymbol{\psi}^{\partial}\|.
		\end{equation*}
		In conclusion,  $ \boldsymbol{\psi}^{\text{non}}:=\boldsymbol{\psi}^0+\nabla\varphi +\boldsymbol{\psi}^{\partial}$ is the unique solution of \eqref{NonhomGradcurl} and satisfies \eqref{NonhomEsti}.
		\par
		By analogy with the homogeneous case, the vector potential representation of the Stokes problem \eqref{StokesProblem} generalizes to non-homogeneous boundary conditions \eqref{NonhomCond}, leading to the relation:
		\begin{align*}
			\nu(\nabla\nabla\times \boldsymbol{\psi}^{\text{non}},\nabla\nabla\times \boldsymbol{\phi}) = \nu\langle-\Delta\nabla\times \boldsymbol{\psi}^{\text{non}},\nabla\times\boldsymbol{\phi}\rangle\\=(\boldsymbol{f},\nabla\times\boldsymbol{\phi})=\nu(\nabla\boldsymbol{u},\nabla\nabla\times\boldsymbol{\phi}),\quad \forall\boldsymbol{\phi}\in \boldsymbol{X}(\Omega).
		\end{align*}
		Since $\nabla\times\boldsymbol{X}(\Omega)=\boldsymbol{H}_0^1(\Omega)\cap\operatorname{ker}(\nabla\cdot)$, we have 
		\begin{equation*}
			\nabla\times \boldsymbol{\psi}^{\text{non}}= \bu^{\text{non}} \text{ in } \Omega \text{ and } \nabla\times \boldsymbol{\psi}^{\text{non}} = \boldsymbol{g} =\bu^{\text{non}} \text{ on } \Gamma.
		\end{equation*}
		The proof is complete.
	\end{proof}
	\begin{remark}\label{NonHomEqu}
		The grad-curl problem \eqref{NonhomGradcurl} is equivalent to the
		following $-$curl$\Delta$curl problem up to the right-hand side:
		find $\boldsymbol{\psi}^{\text{non}}\in \boldsymbol{V}(\Omega)$ such that 
		\begin{equation}\label{curlDeltacurlProblem}
			\begin{aligned}
				-\nabla \times \Delta (\nabla \times \boldsymbol{\psi}^{\text{non}}) &=\boldsymbol{j} \text { in } \Omega, \\
				\nabla \cdot \boldsymbol{\psi}^{\text{non}} &=j_d \text { in } \Omega, \\
				\boldsymbol{n}\times (\boldsymbol{\psi}^{\text{non}}\times \boldsymbol{n})&=\boldsymbol{h} \text { on } \Gamma, \\
				(\nabla \times\boldsymbol{\psi}^{\text{non}})  &=\boldsymbol{g} \text { on } \Gamma,
			\end{aligned}
		\end{equation}
		where $\boldsymbol{j}\in \boldsymbol{H}^{-2}(\Omega)\cap\ker(\nabla\cdot)$, $j_d\in H^{-1}(\Omega)$, and  $(\boldsymbol{h},\boldsymbol{g})\in \boldsymbol{Y}(\Gamma)$.
		Moreover, when $\boldsymbol{j} = \frac{1}{\nu} \nabla \times \boldsymbol{f}$, the $-$curl$\Delta$curl problem \eqref{curlDeltacurlProblem} coincides with the grad-curl problem \eqref{NonhomGradcurl}.
	\end{remark}
	\begin{remark}
		One can also prescribe the following non-homogeneous boundary conditions in the quad-curl problem \eqref{primalForm} to obtain an equivalent grad-curl problem \eqref{NonhomGradcurl}:
		\[
		\boldsymbol{\gamma}_{\tau}(\boldsymbol{\psi}^{\mathrm{non}}) = \boldsymbol{h} \text{ with } \nabla_{\Gamma}\times \boldsymbol{h}= \boldsymbol{g} \cdot \boldsymbol{n}  \text{ and } \nabla \times \boldsymbol{\psi}^{\mathrm{non}} \times \boldsymbol{n} = \boldsymbol{g} \times \boldsymbol{n} \text{ on } \Gamma.
		\]
	\end{remark}
	
	\section{Virtual element spaces}
	Starting from the continuous Stokes complex
	\begin{equation}\label{ContiComplex}
		\mathbb{R} \stackrel{\subset}{\longrightarrow} H^{1}(\Omega) \stackrel{\nabla}{\longrightarrow} \boldsymbol{V}(\Omega) \stackrel{\nabla \times}{\longrightarrow}\boldsymbol{H}^1(\Omega) \stackrel{\nabla \cdot }{\longrightarrow} L^2(\Omega) \longrightarrow 0,
	\end{equation}
	we construct a conforming discrete subcomplex for any integers $r,k\ge 1$:
	\begin{equation}\label{DisComplex}
		\mathbb{R} \stackrel{\subset}{\longrightarrow} U_r(\Omega)\stackrel{\nabla}{\longrightarrow} \boldsymbol{V}_{r-1,k+1}(\Omega) \stackrel{\nabla \times }{\longrightarrow} \boldsymbol{W}_{k}(\Omega) \stackrel{\nabla \cdot }{\longrightarrow} Q_{k-1}(\Omega)\longrightarrow 0.
	\end{equation}
	Here,  the $\boldsymbol{H}(\operatorname{grad-curl})$-conforming virtual element space $\boldsymbol{V}_{r-1,k+1}(\Omega) \subset \boldsymbol{V}(\Omega) $ will be carefully designed to satisfy $$\nabla\times \boldsymbol{V}_{r-1,k+1}(\Omega) = \boldsymbol{W}_k(\Omega)\cap\ker(\nabla\cdot).$$  
	We also establish a commutative diagram relating the continuous and discrete complexes and derive corresponding interpolation error estimates. \par 
	A key feature of the VEM is defining  suitable projection operators onto polynomial spaces. 
	On any given element or face $G$,  we define the $H^1$- seminorm projection $\Pi_k^{\nabla, G} : H^1(G) \to P_k(G)$ by
	\begin{equation*}
		\left\{
		\begin{aligned}
			&\int_G\nabla \Pi_k^{\nabla, G}v\cdot\nabla p_k \d G= \int_G \nabla v\cdot\nabla p_k \d G,  \forall p_k \in P_k(G),  \\
			&\int_{\partial G} \Pi_k^{\nabla, G}v \d S=\int_{\partial G} v \d S. 
		\end{aligned}
		\right.
	\end{equation*}
	The $L^2$- projection $\Pi_k^{0, G}$: $L^2(G) \to P_k(G)$ is given by
	\begin{equation*}
		\int_G\Pi^{0, G}_k v p_k \d G=\int_G v  p_k \d G,  \forall p_k\in P_k(G).
	\end{equation*}
	These projections can be naturally extended to vector fields as $\boldsymbol{\Pi}_{k}^{\nabla,G}: \boldsymbol{H}^1(G;\mathbb{R}^d)\to \boldsymbol{P}_k(G;\mathbb{R}^d)$ and $\boldsymbol{\Pi}_{k}^{0,G}: \boldsymbol{L}^2(G;\mathbb{R}^d)\to \boldsymbol{P}_k(G;\mathbb{R}^d)$.
	For vector polynomial  spaces,  we introduce the following useful decompositions \cite{VEMforMaxwell, VEMfordivcurl}:
	\begin{align}
		\boldsymbol{P}_k(K)&=\nabla P_{k+1}(K)\oplus \left(\boldsymbol{x}\times \boldsymbol{P}_{k-1}(K)\right),  \label{Pkdc1}\\
		\boldsymbol{P}_k(K)&=\nabla \times \boldsymbol{P}_{k+1}(K) \oplus \boldsymbol{x}P_{k-1}(K),  \label{Pkdc2}\\
		\boldsymbol{P}_k(f)& = \overrightarrow{\nabla}_{f}\times P_{k+1}(f) \oplus \boldsymbol{x}P_{k-1}(f)\label{Pkdcf}.
	\end{align}
	For each face $f \in \partial K$, the two-dimensional tangential component of a three-dimensional vector field $\boldsymbol{v}$ on $f$ is defined as $$\boldsymbol{v}_{\tau} = \boldsymbol{v}_f|_f,$$ where $\boldsymbol{v}_f := \boldsymbol{n}_f \times (\boldsymbol{v} \times \boldsymbol{n}_f)$.
	\subsection{Scalar $H^1$-conforming virtual element space}
	For each face $f \in \partial K$, we recall the face space \cite{VEM2013} with $r\ge 2$, $l\ge 1$ 
	\begin{equation}\label{Bf}
		\mathbb{B}_{l,r}(f):=\left\{v \in H^1(f): \Delta v \in P_{r-1}(f),  v|_e \in P_{l}(e), \forall e\in \partial f, v|_{\partial f} \in C^0(\partial f)\right\},
	\end{equation}
	equipped with the degrees of freedom
	\begin{flalign}
		&\	\bullet \mathbf{D}^1_{\mathbb{B}}:  \text{the values of } v \text{ at the vertices of } f, \label{DB1}&\\
		&\	\bullet \mathbf{D}^2_{\mathbb{B}}:  \text{the values of } v \text{ at }l-1 \text{ distinct points of } e, \label{DB2}&\\
		&\	\bullet \mathbf{D}^3_{\mathbb{B}}:  \text{the face moments } \frac{1}{|f|}\int_f \nabla v \cdot \boldsymbol{x}_f p_{r-1} \d f,  \forall p_{r-1} \in P_{r-1}(f) , \label{DB3}
	\end{flalign}
	For $r=1$,  we introduce the serendipity space from \cite{VEMforlowMaxwell}
	\begin{equation}\label{SerBf}
		\begin{aligned}
			\mathbb{B}_{l,1}(f):=\left\{v \in H^1(f): \Delta v \in P_0(f), v|_e \in P_{l}(e), \forall e\in \partial f, v|_{\partial f} \in C^0(\partial f),\right. \\
			\left. \int_f\nabla v \cdot \boldsymbol{x}_f \d f=0 \right\},
		\end{aligned}
	\end{equation}
	where $\boldsymbol{x}_f:= \boldsymbol{x}-\boldsymbol{b}_f$ satisfies $\int_f \boldsymbol{x}_f \d f=0$, thus ensuring $P_1(f)\subset \mathbb{B}_{1,l}(f)$.
	This serendipity space is viewed as the subspace of the primal space \eqref{Bf} with $r=1$, where the degrees of freedom \eqref{DB3} vanish.  
	\par
	On  the polyhedron $K$, the local space for $r\ge 1$ is defined as
	\begin{equation*}
		U_r(K):=\left\{v \in H^{1}(K) : \Delta v \in P_{r-2}(K), v|_f \in \mathbb{B}_{r,r}(f),\forall f \in \partial K, v|_{\partial K} \in C^{0}(\partial K)   \right\}
	\end{equation*}
	with the degrees of freedom
	\begin{flalign}
		&\	\bullet \mathbf{D}^1_{U}:  \text{the values of } v \text{ at the vertices of } K, \label{DU1}&\\
		&\	\bullet \mathbf{D}^2_{U}:  \text{the values of } v \text{ at }r-1 \text{ distinct points of } e, \label{DU2}&\\
		&\	\bullet \mathbf{D}^3_{U}:  \text{the face moments } \frac{1}{|f|}\int_f \nabla v \cdot \boldsymbol{x}_f p_{\widehat{r}-1} \d f,  \forall p_{\widehat{r}-1} \in P_{\widehat{r}-1}(f) , \label{DU3}&\\ 
		&\	\bullet \mathbf{D}^4_{U}:  \text{the volume moments } \frac{1}{|K|}\int_K v p_{r-2} \d f,  \forall p_{r-2} \in P_{r-2}(K).&	\label{DU4}
	\end{flalign}
	Here, $\widehat{r}(r)$ is defined by
	\begin{equation}\label{rwidehat}
		\widehat{r}(r) = 
		\begin{cases}
			0, & \text{if } r = 1, \\
			r, & \text{if } r \ge 2.
		\end{cases}
	\end{equation}
	The dimension of $U_{r}(K)$ is given by
	\begin{equation*}
		\dim(U_r(K))=N_v+(r-1)N_e+\dim(P_{\widehat{r}-1}(f))N_f+ \dim(P_{r-2}(K)).
	\end{equation*}
	The global $H^1$-conforming virtual element space is obtained by gluing local spaces:
	\begin{equation*}
		U_{r}(\Omega):=\{ v \in H^1( \Omega):v|_K \in U_{r}(K), \forall K \in \mathcal{T}_h\}.
	\end{equation*}
	\subsection{Vector $\boldsymbol{H}^1$-conforming virtual element space}
	We introduce two face enhancement spaces using the computable $H^1$-projection $\Pi^{\nabla}_{k,f}$ \cite{VEM2014} with $k\ge 1$:
	\begin{equation*}
		\begin{aligned}
			\widehat{\mathbb{B}}_{k}(f)&:=\left\{w \in\mathbb{B}_{k+2,k}(f) 
			: (w-\Pi_{k}^{\nabla, f} w, \widehat{p}_{k+1})_ f=0, \forall \widehat{p}_{k+1} \in P_{k+1}(f)/ P_{k-2}(f)\right\}, \\
			\overline{\mathbb{B}}_{k}(f)&:=\left\{w \in\mathbb{B}_{k+2,k}(f) 
			: (w-\Pi_{k}^{\nabla, f} w , \widehat{p}_{k+1})_f=0, \forall \widehat{p}_{k+1} \in P_{k+1}(f)/ P_{m}(f)\right\},
		\end{aligned}
	\end{equation*}
	where $m=\text{max}(0,k-2)$. Note that the ``super-enhanced'' constraints proposed  by \cite{VEMforStokes}, originally used to compute the \( \boldsymbol{L}^2 \)-projection in \( \boldsymbol{W}_k(K) \), are extended in this work to compute the \( \boldsymbol{L}^2 \)-projection in \( \boldsymbol{V}_{r-1,k+1}(K) \) (see Proposition \ref{L2prop}).
	The boundary space is defined as
	\begin{equation*}
		\boldsymbol{\mathbb{B}}_k(\partial K):=\{\boldsymbol{w}\in \boldsymbol{C}^0(\partial K): \boldsymbol{w}_{\tau}\in[\widehat{\mathbb{B}}_{k}(f)]^2,\boldsymbol{w} \cdot \boldsymbol{n}_f|_f\in \overline{\mathbb{B}}_{k}(f) ,\forall f \in \partial K \},
	\end{equation*}
	and its dimension is given by
	\begin{equation}\label{dimBk}
		\begin{aligned}
			\text{dim}\mathbb{B}_k(\partial K)
			=3N_v + 3(k-1)N_e+ (2\text{dim}(P_{k-2}(f))+\text{dim}(P_m(f)))N_f.
		\end{aligned}
	\end{equation}
	\par 
	The local vector $\boldsymbol{H}^1$-conforming  virtual element space \cite{VEMforMHD,VEMforStokes} on the polyhedron $K$ is defined as the enlarged space: 
	\begin{equation}\label{EnlargedW}
		\begin{aligned}
			\widetilde{\boldsymbol{W}}_{k}(K):= \bigg\{ \boldsymbol{w} \in
			\boldsymbol{H}^1(K): 
			&\w |_{\partial K} \in \boldsymbol{\mathbb{B}}_k(\partial K),\\
			&\left\{ \begin{aligned}
				&-\Delta \boldsymbol{w} + \nabla q \in \boldsymbol{x}_K \times \boldsymbol{P}_{k-1}(K)  \\
				&\quad \quad   \quad\text{ for some } q \in L^2_0(K), \\
				&\nabla \cdot  \boldsymbol{w} \in P_{k-1}(K)
			\end{aligned} \right. 
			\bigg\},
		\end{aligned}
	\end{equation}
	and the restricted space:
	\begin{equation}\label{WFinal}
		\begin{aligned}
			\boldsymbol{W}_k(K):=\{
			\boldsymbol{w}\in \widetilde{\boldsymbol{W}}_k(K): 
			&(\boldsymbol{w}-\boldsymbol{\Pi}_{k}^{\nabla, K}  \boldsymbol{w}, \boldsymbol{x}_K \times \widehat{\boldsymbol{p}}_{k-1})_K=0, \\
			&\quad \quad   \forall \widehat{\boldsymbol{p}}_{k-1} \in \boldsymbol{P}_{k-1}(K)/\boldsymbol{P}_{k-3}(K)\},
		\end{aligned}
	\end{equation}
	where $\boldsymbol{x}_K:=\boldsymbol{x}-\boldsymbol{b}_K$. In $\boldsymbol{W}_k(K)$ we have the degrees of freedom
	\begin{flalign}
		&\	\bullet \mathbf{D}^1_{\boldsymbol{W}}:  \text{the values of } \boldsymbol{w} \text{ at the vertices of } K, \label{DW1}&\\
		&\	\bullet \mathbf{D}^2_{\boldsymbol{W}}:  \text{the values of } \boldsymbol{w} \text{ at }k-1 \text{ distinct points of } e, \label{DW2}&\\
		&\	\bullet \mathbf{D}^3_{\boldsymbol{W}}:  \text{the  tangential face moments } 	\int_{f} \boldsymbol{w}_{\tau} \cdot \boldsymbol{p}_{k-2} \, \d f, \forall  \boldsymbol{p}_{k-2}\in \boldsymbol{P}_{k-2}(f), \label{DW3}&\\ 
		&\	\bullet \mathbf{D}^4_{\boldsymbol{W}}:  \text{the  normal face moments } 	\int_{f} \boldsymbol{w}\cdot \boldsymbol{n}_f p_{m}(f) \d f, \forall p_{m}\in P_{m}(f), \label{DW4}&\\ 
		&\	\bullet \mathbf{D}^5_{\boldsymbol{W}}:  \text{the volume moments } \int_{K} \boldsymbol{w}\cdot \boldsymbol{x}_K\times \boldsymbol{p}_{k-3} \d K, \forall p_{k-3} \in \boldsymbol{P}_{k-3}(K),&	\label{DW5}&\\
		&\	\bullet \mathbf{D}^6_{\boldsymbol{W}}:  \text{the volume moments } \int_K (\nabla \cdot \boldsymbol{w})\widehat{p}_{k-1} \d K, \forall \widehat{p}_{k-1}\in P_{k-1}(K)/P_{0}(K).& \label{DW6}
	\end{flalign}
	\begin{remark}
		In the enlarged space $\widetilde{\boldsymbol{W}}_k(K)$, it is additionally equipped with the degrees of freedom
		\begin{equation*}
			\bullet \widetilde{\mathbf{D}}^5_{W}: \text{the volume moments} \int_K \boldsymbol{w}\cdot \boldsymbol{x}_K\times \widehat{\boldsymbol{p}}_{k-1} \d K, \forall \widehat{\boldsymbol{p}}_{k-1} \in \boldsymbol{P}_{k-1}(K)/\boldsymbol{P}_{k-3}(K).
		\end{equation*}
		Since the $\boldsymbol{H}^1$-seminorm projection $\boldsymbol{\Pi}_k^{\nabla,K}$ defiend in $\boldsymbol{W}_k(K)$ is computable by the degrees of freedom \eqref{DW1}-\eqref{DW6}, see \cite[Proposition 5.1]{VEMforStokes}, we recognize $\boldsymbol{W}_k(K)$ as the subspace of  $\widetilde{\boldsymbol{W}}_k(K)$ with the $\widetilde{\mathbf{D}}^5_{W}$ restricted by the projection $\boldsymbol{\Pi}_k^{\nabla,K}$.
	\end{remark}
	The dimension of $\boldsymbol{W}_k(K)$ is given by
	\begin{equation}\label{dWk}
		\dim(\boldsymbol{W}_k(K))= \dim(\mathbb{B}_k(\partial K)+ \dim (\boldsymbol{x}_K\times \boldsymbol{P}_{k-2}(K))+ \dim(P_{k-1}(K))-1.
	\end{equation}
	From the decomposition \eqref{Pkdc1}, we obtain
	\begin{equation}\label{dimxPk}
		\begin{aligned}
			\text{dim}(\boldsymbol{x}_K \times \boldsymbol{P}_{k-3}(K))&=
			\text{dim}(\boldsymbol{P}_{k-2}(K))-\text{dim}(\nabla P_{k-1}(K))\\
			&=3\text{dim}(P_{k-2}(K))-\text{dim}(P_{k-1}(K))+1,
		\end{aligned}
	\end{equation}
	which, combined with \eqref{dWk} and \eqref{dimBk}, yields
	\begin{equation}
		\begin{aligned}
			\dim(\boldsymbol{W}_k(K))&=3N_v + 3(k-1)N_e+ (2\text{dim}(P_{k-2}(f))+\text{dim}(P_m(f)))N_f\\
			&\quad + 3\text{dim}(P_{k-2}(K)).
		\end{aligned}
	\end{equation}
	\par
	At last,
	we also define the global space by
	\begin{eqnarray*}
		\boldsymbol{W}_k(\Omega):=\{ \boldsymbol{w}\in \boldsymbol{H}^1(\Omega):\, \boldsymbol{w}|_{K}\in \boldsymbol{W}_k(K),  \forall K \in \mathcal{T}_h\}.
	\end{eqnarray*}

	\subsection{$\boldsymbol{H}(\operatorname{grad-curl})$-conforming virtual element space}
	The $\boldsymbol{H}(\operatorname{grad-curl})$-conforming virtual element space $\boldsymbol{V}_{r-1,k+1}(\Omega)$ is constructed in this subsection.
	We start from the face $\boldsymbol{H}(\operatorname{curl-curl})$-conforming virtual element space with $r\ge 2$, $k\ge 1$ described in \cite{ZJQQuadCurl}:
	\begin{equation*}
		\begin{aligned}
			\mathbb{E}_{r-1,k+1}(f):=\left\{\boldsymbol{v} \in \boldsymbol{L}^2(f): \nabla \cdot  \boldsymbol{v}\in P_{r-1}(f) ,  \nabla_{f}\times \boldsymbol{v} \in \overline{\mathbb{B}}_k(f), \right.\\
			\left.
			\left(\boldsymbol{v} \cdot \boldsymbol{t}_{\partial f}\right)|_e \in P_{r-1}(e),  \forall e\in \partial f
			\right\},
		\end{aligned}
	\end{equation*}
	and the corresponding Serendipity space for $r=1$:
	\begin{equation}\label{reducedE}
		\begin{aligned}
			\mathbb{E}_{0,k+1}(f):=\left\{\boldsymbol{v} \in \boldsymbol{L}^2(f): \nabla \cdot  \boldsymbol{v}\in P_{0}(f) ,  \nabla_{f}\times \boldsymbol{v} \in \overline{\mathbb{B}}_k(f),\left(\boldsymbol{v} \cdot \boldsymbol{t}_{\partial f}\right)|_e \in P_{0}(e), \right.\\
			\left.
			\forall e\in \partial f,\int_f \v \cdot \boldsymbol{x}_fp_0 =0, \forall p_0\in P_0(f)
			\right\}.
		\end{aligned}
	\end{equation}
	Here, the face integral constraints from \cite{VEMforlowMaxwell} are extended to define \eqref{reducedE} so that the inclusion $\nabla \mathbb{B}_{1,1}(f)\subset\mathbb{E}_{0,k+1}(f)$ holds. The associated degrees of freedom will be specified later in Proposition~\ref{DofProp}, 
	and will include the normal components of~\eqref{dofV1} and~\eqref{dofV2}, together with those in~\eqref{dofV3}, \eqref{dofV5}, and~\eqref{dofV6}.
	Gluing the face on the boundary yields 
	\begin{equation}
		\begin{aligned}\label{boundary space}
			\mathbb{E}_{r-1,k+1 }(\partial K):=\left\{\v \in \boldsymbol{L}^2(\partial K) : \v_{\tau}
			\in \mathbb{E}_{r-1,k+1}(f), \forall f \in \partial K, \right.\\
			\left. \v \cdot \boldsymbol{n}_{\partial K} =0 \text{ almost everywhere on }\partial K, \right. \\
			\left. \v_{f_1} \cdot \boldsymbol{t}_e =\v_{f_2}\cdot \boldsymbol{t}_e \text{ and } \nabla_{f_1}\times\v_{\tau}=\nabla_{f_2}\times\v_{\tau} \right.\\
			\left. \text{ on each edge } e\in \partial f_1\cap \partial f_2, \,  f_1,f_2 \subset \partial K
			\right\}.
		\end{aligned}
	\end{equation}
	Apparently, the continuity requirement defined in the tangential space \eqref{boundary space} ensures that $\nabla_{\partial K} \times \mathbb{E}_{r-1,k+1}(\partial K) \subset H^1(\partial K)$. 
	Furthermore, we have the following compatibility relation:
	\begin{equation}\label{compatibility}
		\nabla_{\partial K} \times \mathbb{E}_{r-1,k+1}(\partial K)= \mathbb{B}_k(\partial K) \cdot \boldsymbol{n}_{\partial K},
	\end{equation}
	which implies that $\mathbb{E}_{r-1,k+1}(\partial K) \times \mathbb{B}_k(\partial K)\subset \boldsymbol{Y}(\partial K).$  The dimension of the space $\mathbb{E}_{r-1,k+1}(\partial K)$ is
	\begin{equation}\label{dimEk}
		\begin{aligned}
			\text{dim}(\mathbb{E}_{r-1,k+1}(\partial K))&=\dim(P_{r-1}(e))N_e+\text{dim}(P_{\widehat{r}-1}(f))N_f+\dim(\mathbb{B}_k(\partial K)\cdot\boldsymbol{n}_{\partial K})-N_f,\\
			&=N_v+(k-1+\dim(P_{r-1}(e))N_e+(\text{dim}(P_{\widehat{r}-1}(f))+\dim(P_m(f))-1)N_f,
		\end{aligned}
	\end{equation}
	where the term $-N_f,$ follows from the compatibility identity:
	\begin{equation*}
		\int_f \nabla_{f}\times \boldsymbol{v} = \int_{\partial f} \boldsymbol{v}\cdot \boldsymbol{t}_{\partial f} \d S.
	\end{equation*}
	On the element $K$, we first define the enlarged space 
	\begin{equation}\label{EnlargedV}
		\begin{aligned}
			\widetilde{\boldsymbol{V}}_{r-1,k+1}(K):=\bigg\{\boldsymbol{v} \in \boldsymbol{V}(K): 
			&\left\{
			\begin{aligned}
				&(\nabla\times \boldsymbol{v})|_{\partial K}\in \boldsymbol{\mathbb{B}}_k(\partial K),\\
				&\boldsymbol{n}_{\partial K}\times (\boldsymbol{v}\times \boldsymbol{n}_{\partial K})|_{ \partial K} \in \mathbb{E}_{r-1,k+1}(\partial K),
			\end{aligned}
			\right.\\
			&\left\{ \begin{aligned}
				&\nabla \times \Delta (\nabla\times  \boldsymbol{v}) \in \nabla \times (\boldsymbol{x}_K \times \boldsymbol{P}_{k-1}(K)) , \\
				&\nabla \cdot  \boldsymbol{v}\in P_{r-2}(K)
			\end{aligned} \right. 
			\bigg\},
		\end{aligned}
	\end{equation}
	whose well-posedness follows directly from that of the $-$curl$\Delta$curl problem \eqref{curlDeltacurlProblem} by given data on the polyhedron $K$:
	\begin{align*}
		\boldsymbol{j}=\nabla\times \boldsymbol{f}\in \nabla\times(\boldsymbol{x}_K\times\boldsymbol{P}_{k-1}(K)), \quad j_d \in P_{r-2}(K), \\ 
		(\boldsymbol{h},\boldsymbol{g})\in \mathbb{E}_{r-1,k+1}(\partial K)\times \mathbb{B}_k(\partial K)\subset \boldsymbol{Y}(\partial K).
	\end{align*}
	Combining the dimensional results from \eqref{dimBk}, \eqref{dimEk}, and \eqref{dimxPk}, we obtain the dimension of $\widetilde{\boldsymbol{V}}_{r-1,k+1}(K)$ as:
	\begin{equation}\label{dim}
		\begin{aligned}
			\text{dim} (\widetilde{\boldsymbol{V}}_{r-1,k+1}(K) )
			&=
			\text{dim}(\mathbb{B}_k(\partial K))+ \text{dim}(\mathbb{E}_{r-1,k+1}(\partial K)) -\text{dim}(\mathbb{B}_k(\partial K)\cdot\boldsymbol{n}_{\partial K})\\
			&\quad + \text{dim}(\nabla\times(\boldsymbol{x}_K \times \boldsymbol{P}_{k-1}(K)))+\text{dim}(P_{r-2}(K))\\
			&= \dim(\mathbb{B}_k(\partial K))+ \dim(P_{r-1}(e))N_e+\dim(P_{\widehat{r}-1}(f)-1)N_f \\
			&\quad+ \dim(\boldsymbol{x}_K\times\boldsymbol{P}_{k-1}(K))+\dim(P_{r-2}(K))  \\
			&=3 N_v+(3 k-3+ \text{dim}(P_{r-1}(e))) N_{e}\\
			&\quad+\left(2 \text{dim}P_{k-2}(f)+\text{dim}P_{m}(f)+\text{dim}P_{\widehat{r}-1}(f)-1\right)N_{f}\\
			&\quad +3\text{dim}(P_k(K))-\text{dim}(P_{k+1}(K))+1+\text{dim}P_{r-2}(K),
		\end{aligned}
	\end{equation}
	where the term $-\text{dim}(\mathbb{B}_k(\partial K)\cdot\boldsymbol{n}_{\partial K})$ ensures that the compatibility condition \eqref{compatibility} holds, and the curl operator restricted to $\boldsymbol{x}_K\times \boldsymbol{P}_{k-1}(K)$ is an isomorphism.
	\par
	We define the final restricted space 
	\begin{equation}\label{FinalSpace}
		\begin{aligned}
			\boldsymbol{V}_{r-1,k+1}(K):=\{\v \in \widetilde{\boldsymbol{V}}_{r-1,k+1}(K): &(\nabla \times \boldsymbol{v}-\boldsymbol{\Pi}_{k}^{\nabla, K} \nabla \times \boldsymbol{v}, \boldsymbol{x}_K \times \widehat{\boldsymbol{p}}_{k-1})_K=0, \\
			&\quad \quad \quad  \forall \widehat{\boldsymbol{p}}_{k-1} \in \boldsymbol{P}_{k-1}(K)/\boldsymbol{P}_{k-3}(K)\}.
		\end{aligned}
	\end{equation}
	To satisfy $\nabla\times \boldsymbol{V}_{r-1,k+1}(K)\subset \boldsymbol{W}_k(K)$, the restriction in \eqref{FinalSpace} is designed as the curl analogue of \eqref{WFinal}.
	\begin{proposition}\label{DofProp}
		The dimension of $\boldsymbol{V}_{r-1,k+1}(K)$ is given by
		\begin{align*}
			\text{dim} (\boldsymbol{V}_{r-1,k+1}(K) )
			&=3 N_v+(3 k-3+ \text{dim}(P_{r-1}(e))) N_{e}\\
			&\quad+\left(2 \text{dim}(P_{k-2}(f))+\text{dim}(P_{m}(f))+\text{dim}(P_{\widehat{r}-1}(f))-1\right)N_{f}\\
			&\quad +3\text{dim}(P_{k-2}(K))-\text{dim}(P_{k-1}(K))+1+\text{dim}(P_{r-2}(K)),
		\end{align*}
		where $m=\max(0,k-2)$ and $\widehat{r}$ is defined in \eqref{rwidehat}.
		Furthermore,
		the following degrees of freedom
		\begin{flalign}\label{dofV1}
			&\	\bullet \mathbf{D}^1_{\boldsymbol{V}}: \text{the values of }  \nabla\times \boldsymbol{v} \text{ at the vertices of } K, &\\
			\label{dofV2}
			&\ \bullet \mathbf{D}^2_{\boldsymbol{V}}:\text{the values of } \nabla\times \boldsymbol{v} \text{ at k-1 distinct points of every edge of } K,&\\
			\label{dofV3}
			&\  \bullet \mathbf{D}^3_{\boldsymbol{V}}:  \text{the edge moments of } 
			\frac{1}{|e|} \int_{e} \boldsymbol{v} \cdot \boldsymbol{t}_e p_{r-1}\d e, \forall p_{r-1}\in P_{r-1}(e),&\\
			\label{dofV4}
			&\ \bullet \mathbf{D}^4_{\boldsymbol{V}}:\text{the  tangential face moments } \frac{1}{|f|}\int_f (\nabla\times \boldsymbol{v})_{\tau} \cdot \boldsymbol{p}_{k-2} \d f,  \forall \boldsymbol{p}_{k-2} \in \boldsymbol{P}_{k-2}(f), & \\
			\label{dofV5}
			&\ \bullet \mathbf{D}^5_{\boldsymbol{V}}: \text{the  face moments  } \frac{1}{|f|}\int_{f}  \boldsymbol{v}_{\tau} \cdot \overrightarrow{\nabla}_{f}\times p_{k-2} \d f, \forall p_{k-2} \in P_{k-2}(f), &\\
			\label{dofV6}
			&\ \bullet \mathbf{D}^6_{\boldsymbol{V}}: \text{the  face moments  } \frac{1}{|f|}\int_{f}  \boldsymbol{v}_{\tau}\cdot \boldsymbol{x}_f p_{\widehat{r}-1} \d f, \forall p_{\widehat{r}-1} \in P_{\widehat{r}-1}(f), &\\
			\label{dofV7}
			&\ \bullet \mathbf{D}^7_{\boldsymbol{V}}: \text{the volume moments } \int_{K} \nabla\times \boldsymbol{v}  \cdot\left(\boldsymbol{x}_K \times \boldsymbol{p}_{k-3}\right) \d K, \forall \boldsymbol{p}_{k-3} \in \boldsymbol{P}_{k-3}(K), &\\
			\label{dofV8}
			&\ \bullet \mathbf{D}^8_{\boldsymbol{V}}:\text{the volume moments } 
			\int_K \boldsymbol{v} \cdot \boldsymbol{x}_K p_{r-2} \d K, \forall  p_{r-2} \in 
			P_{r-2}(K)
		\end{flalign}
		are unisolvent in $\Vrk$.
	\end{proposition}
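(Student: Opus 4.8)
The plan is to prove the statement in two steps: first verify that $\dim(\boldsymbol{V}_{r-1,k+1}(K))$ equals the total cardinality of the functionals $\mathbf{D}^1_{\boldsymbol{V}},\dots,\mathbf{D}^8_{\boldsymbol{V}}$, and then show these functionals are separating, i.e.\ that a function $\boldsymbol{v}\in\boldsymbol{V}_{r-1,k+1}(K)$ on which all of them vanish must be $0$. For the dimension, I would start from \eqref{dim} and show that the restriction \eqref{FinalSpace} imposes exactly $\dim(\boldsymbol{x}_K\times\boldsymbol{P}_{k-1}(K))-\dim(\boldsymbol{x}_K\times\boldsymbol{P}_{k-3}(K))$ independent conditions on $\widetilde{\boldsymbol{V}}_{r-1,k+1}(K)$ — the same count as for the restriction \eqref{WFinal} defining $\boldsymbol{W}_k(K)$, of which \eqref{FinalSpace} is the curl analogue — using that the bubble part $\nabla\times\Delta(\nabla\times\boldsymbol{v})\in\nabla\times(\boldsymbol{x}_K\times\boldsymbol{P}_{k-1}(K))$ can be prescribed freely through the well-posed $-$curl$\Delta$curl solve while $\boldsymbol{\Pi}_k^{\nabla,K}(\nabla\times\boldsymbol{v})$ is fixed by the remaining data. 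Since $\dim(\boldsymbol{x}_K\times\boldsymbol{P}_j(K))=3\dim(P_{j+1}(K))-\dim(P_{j+2}(K))+1$ by \eqref{Pkdc1}, the term $3\dim(P_k(K))-\dim(P_{k+1}(K))+1$ in \eqref{dim} is replaced by $3\dim(P_{k-2}(K))-\dim(P_{k-1}(K))+1$, which is the claimed formula; a direct tally of the cardinalities of $\mathbf{D}^1_{\boldsymbol{V}},\dots,\mathbf{D}^8_{\boldsymbol{V}}$ — with effective ranks $\dim(P_m(f))-1$ for $\mathbf{D}^5_{\boldsymbol{V}}$ (the surface vector curl kills constants) and $\dim(\boldsymbol{x}_K\times\boldsymbol{P}_{k-3}(K))$ for $\mathbf{D}^7_{\boldsymbol{V}}$ — gives the same number. (Well-definedness of each functional on $\boldsymbol{V}_{r-1,k+1}(K)$ is immediate from $\nabla\times\boldsymbol{v}\in\boldsymbol{H}^1(K)$, from $(\nabla\times\boldsymbol{v})|_{\partial K}\in\boldsymbol{\mathbb{B}}_k(\partial K)\subset\boldsymbol{C}^0(\partial K)$, and from $\boldsymbol{v}_\tau|_f\in\mathbb{E}_{r-1,k+1}(f)$ with edge continuity.)

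The heart of the separation argument is a reduction to $\boldsymbol{W}_k(K)$. First I would verify that $\nabla\times\boldsymbol{V}_{r-1,k+1}(K)\subseteq\boldsymbol{W}_k(K)$: for $\boldsymbol{v}\in\boldsymbol{V}_{r-1,k+1}(K)$ the field $\boldsymbol{w}:=\nabla\times\boldsymbol{v}$ lies in $\boldsymbol{H}^1(K)$ with $\boldsymbol{w}|_{\partial K}\in\boldsymbol{\mathbb{B}}_k(\partial K)$ and $\nabla\cdot\boldsymbol{w}=0\in P_{k-1}(K)$; since $\nabla\times(\Delta\boldsymbol{w})\in\nabla\times(\boldsymbol{x}_K\times\boldsymbol{P}_{k-1}(K))$ and $K$ is star-shaped, hence simply connected, there exist $\boldsymbol{p}\in\boldsymbol{P}_{k-1}(K)$ and $q\in L^2_0(K)$ with $-\Delta\boldsymbol{w}+\nabla q=-\boldsymbol{x}_K\times\boldsymbol{p}$, and the constraint in \eqref{FinalSpace} is literally the one in \eqref{WFinal}. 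Now assume $\mathbf{D}^i_{\boldsymbol{V}}(\boldsymbol{v})=0$ for all $i$. The degrees of freedom $\mathbf{D}^1_{\boldsymbol{W}},\mathbf{D}^2_{\boldsymbol{W}},\mathbf{D}^3_{\boldsymbol{W}},\mathbf{D}^5_{\boldsymbol{W}}$ of $\boldsymbol{w}$ coincide with $\mathbf{D}^1_{\boldsymbol{V}},\mathbf{D}^2_{\boldsymbol{V}},\mathbf{D}^4_{\boldsymbol{V}},\mathbf{D}^7_{\boldsymbol{V}}$ and so vanish; $\mathbf{D}^6_{\boldsymbol{W}}$ vanishes because $\nabla\cdot\boldsymbol{w}=0$; and for the normal face moments $\mathbf{D}^4_{\boldsymbol{W}}$ the surface identity $\boldsymbol{w}\cdot\boldsymbol{n}_f=\nabla_f\times\boldsymbol{v}_\tau$ together with integration by parts on $f$ gives, for every $p\in P_m(f)$,
\begin{equation*}
\int_f(\boldsymbol{w}\cdot\boldsymbol{n}_f)\,p\,\d f=\int_f\boldsymbol{v}_\tau\cdot(\overrightarrow{\nabla}_f\times p)\,\d f+\int_{\partial f}(\boldsymbol{v}\cdot\boldsymbol{t}_{\partial f})\,p\,\d S ,
\end{equation*}
which vanishes since $P_m(f)\subseteq P_{k-2}(f)$ turns the first term into an instance of $\mathbf{D}^5_{\boldsymbol{V}}$, and $P_m(e)\subseteq P_{r-1}(e)$ (because $m\le k-2\le r-1$) turns the boundary term into an instance of $\mathbf{D}^3_{\boldsymbol{V}}$. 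Thus every degree of freedom of $\boldsymbol{W}_k(K)$ vanishes on $\boldsymbol{w}$, so $\boldsymbol{w}=\nabla\times\boldsymbol{v}=0$ by the unisolvence of $\boldsymbol{W}_k(K)$ proved in \cite{VEMforStokes}.

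It remains to deduce $\boldsymbol{v}=0$ from $\nabla\times\boldsymbol{v}=0$. Since $K$ is simply connected, $\boldsymbol{v}=\nabla\phi$ with $\phi\in H^1(K)$, and the defining conditions of $\boldsymbol{V}_{r-1,k+1}(K)$ force $\phi|_e\in P_r(e)$ (from $\boldsymbol{v}\cdot\boldsymbol{t}_e\in P_{r-1}(e)$), $\phi|_f\in\mathbb{B}_{r,r}(f)$ (from $\nabla_f\cdot\boldsymbol{v}_\tau=\Delta_f(\phi|_f)\in P_{r-1}(f)$; the serendipity space \eqref{SerBf}, together with \eqref{reducedE}, when $r=1$), and $\Delta\phi=\nabla\cdot\boldsymbol{v}\in P_{r-2}(K)$. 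Testing $\mathbf{D}^3_{\boldsymbol{V}}(\boldsymbol{v})=0$ with constants and using the connectedness of the $1$-skeleton of $K$ shows $\phi$ is constant at all vertices; normalizing by that constant, $\mathbf{D}^3_{\boldsymbol{V}}(\boldsymbol{v})=0$ then forces $\phi|_e=0$ on each edge (a space of dimension $r+1$ annihilated by two endpoint conditions and $r-1$ independent moment conditions). Next, $\mathbf{D}^6_{\boldsymbol{V}}(\boldsymbol{v})=0$ is exactly the vanishing of the moments $\mathbf{D}^3_{\mathbb{B}}$ of $\phi|_f$, so the unisolvence of $\mathbb{B}_{r,r}(f)$ (or its serendipity variant) yields $\phi|_f=0$, i.e.\ $\phi=0$ on $\partial K$ — at this stage $\mathbf{D}^5_{\boldsymbol{V}}$ is automatic, since $\int_f\nabla_f\phi\cdot(\overrightarrow{\nabla}_f\times p_{k-2})=\int_{\partial f}\phi\,(\overrightarrow{\nabla}_f\times p_{k-2})\cdot\boldsymbol{n}_{\partial f}=0$. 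Finally, integration by parts converts $\mathbf{D}^8_{\boldsymbol{V}}(\boldsymbol{v})=0$ into $\int_K\phi\,\nabla\cdot(\boldsymbol{x}_K p_{r-2})\,\d K=0$ for all $p_{r-2}\in P_{r-2}(K)$; as $p\mapsto\nabla\cdot(\boldsymbol{x}_K p)=(3+\boldsymbol{x}_K\cdot\nabla)p$ is an isomorphism of $P_{r-2}(K)$, it follows that $\int_K\phi\,q=0$ for all $q\in P_{r-2}(K)$, whence $\int_K|\nabla\phi|^2=-\int_K\phi\,\Delta\phi=0$ using $\Delta\phi\in P_{r-2}(K)$ and $\phi|_{\partial K}=0$. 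Therefore $\boldsymbol{v}=\nabla\phi=0$.

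I expect the hard part to be the bookkeeping in the dimension count — reconciling the effective ranks of the ``overcounted'' families $\mathbf{D}^5_{\boldsymbol{V}},\mathbf{D}^7_{\boldsymbol{V}}$ and of the constraints \eqref{FinalSpace} with the polynomial decompositions \eqref{Pkdc1}--\eqref{Pkdcf} — together with the two structural points in the separation argument: that $\nabla\times$ actually maps $\boldsymbol{V}_{r-1,k+1}(K)$ into $\boldsymbol{W}_k(K)$ (so that the established unisolvence of $\boldsymbol{W}_k(K)$ is applicable), and the surface integration-by-parts step for the normal face moments $\mathbf{D}^4_{\boldsymbol{W}}$, which is the single place where the curl-type and the tangential-type degrees of freedom of $\boldsymbol{v}$ genuinely interact.
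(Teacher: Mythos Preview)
Your argument is correct, but it diverges from the paper's in two places worth noting. First, for the dimension count and unisolvence the paper does \emph{not} work directly in the restricted space $\boldsymbol{V}_{r-1,k+1}(K)$: it invokes the standard enlarged--to--restricted technique of Ahmad et al., showing that $\mathbf{D}^1_{\boldsymbol{V}},\dots,\mathbf{D}^8_{\boldsymbol{V}}$ together with the extra moments $\widetilde{\mathbf{D}}^7_{\boldsymbol{V}}$ (tests against $\boldsymbol{x}_K\times\widehat{\boldsymbol{p}}_{k-1}$, $\widehat{\boldsymbol{p}}_{k-1}\in\boldsymbol{P}_{k-1}/\boldsymbol{P}_{k-3}$) are unisolvent in the \emph{enlarged} space $\widetilde{\boldsymbol{V}}_{r-1,k+1}(K)$, after which the passage to the restricted space is automatic. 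This sidesteps the delicate point you flag --- that the constraints \eqref{FinalSpace} are genuinely independent --- at the price of carrying the extra moments through the proof. Second, and more substantively, to kill $\nabla\times\boldsymbol{v}$ the paper does \emph{not} appeal to the unisolvence of $\boldsymbol{W}_k(K)$. Instead it first obtains $\nabla\times\boldsymbol{v}=0$ on $\partial K$ and $\boldsymbol{v}_\tau=0$ on $\partial K$ from the boundary degrees of freedom, and then computes directly, by two integrations by parts against those homogeneous traces,
\[
|\nabla\times\boldsymbol{v}|_{1,K}^2 \;=\; -\int_K \boldsymbol{v}\cdot\nabla\times\Delta(\nabla\times\boldsymbol{v})\,\mathrm{d}K \;=\; -\int_K \nabla\times\boldsymbol{v}\cdot(\boldsymbol{x}_K\times\boldsymbol{p}_{k-1})\,\mathrm{d}K \;=\;0,
\]
using the full moments $\mathbf{D}^7_{\boldsymbol{V}}\cup\widetilde{\mathbf{D}}^7_{\boldsymbol{V}}$ available in the enlarged space. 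Your route --- establish $\nabla\times\boldsymbol{V}_{r-1,k+1}(K)\subseteq\boldsymbol{W}_k(K)$ and then cite the known unisolvence of $\boldsymbol{W}_k(K)$ --- is more modular and arguably cleaner, but it front-loads the inclusion $\nabla\times\boldsymbol{V}_{r-1,k+1}(K)\subseteq\boldsymbol{W}_k(K)$, which the paper only records afterwards in its exactness theorem; the paper's approach is entirely self-contained. One small wording issue: your claim ``$P_m(f)\subseteq P_{k-2}(f)$'' fails for $k=1$ (then $m=0$, $k-2=-1$), though the conclusion survives because $\overrightarrow{\nabla}_f\times P_0(f)=\{0\}$; the paper's proof has the same harmless slip. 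The final reduction to $\phi\in U_r(K)$ with vanishing degrees of freedom is essentially identical in both arguments.
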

	\begin{proof}
		Using standard techniques for passing from enlarged to restricted virtual element spaces \cite{Ahmad2013},  it suffices to  show that  the degrees of freedoms \eqref{dofV1}-\eqref{dofV8}, along with the additional moments
		\begin{equation*}
			\widetilde{\mathbf{D}}^7_{\boldsymbol{V}}: \int_{K} \nabla\times \boldsymbol{v}  \cdot\left(\boldsymbol{x}_K \times \widehat{\boldsymbol{p}}_{k-1}\right) \d K, \, \forall \widehat{\boldsymbol{p}}_{k-1} \in \boldsymbol{P}_{k-1}(K)/\boldsymbol{P}_{k-3}(K)
		\end{equation*}
		are unisolvent in the enlarged space $\widetilde{\boldsymbol{V}}_{r-1,k+1}(K)$. 
		Note that $\widetilde{\mathbf{D}}^7_{\boldsymbol{V}} \cup  \mathbf{D}^7_{\boldsymbol{V}}$ are equivalent to prescribing the moments
		\begin{equation*}
			\int_{K} \nabla\times \boldsymbol{v}  \cdot\left(\boldsymbol{x}_K \times \boldsymbol{p}_{k-1}\right) \d K, \forall \boldsymbol{p}_{k-1} \in \boldsymbol{P}_{k-1}(K).
		\end{equation*}
		\par 
		The proof begins with a dimension count of the degrees of freedom:
		\begin{gather*}
			\# \mathbf{D}^1_{\boldsymbol{V}}= 3N_v,\quad \# \mathbf{D}^2_{\boldsymbol{V}}=3(k-1)N_e,\quad \# \mathbf{D}^3_{\boldsymbol{V}}=\text{dim}(P_{r-1}(e))N_e\\
			\# \mathbf{D}^4_{\boldsymbol{V}}=2\text{dim}(P_{k-2}(f))N_f,\quad \# \mathbf{D}^5_{\boldsymbol{V}}=(\text{dim}(P_{m}(f))-1)N_f,\quad \# \mathbf{D}^6_{\boldsymbol{V}}=\text{dim}(P_{\widehat{r}-1}(f))N_f,\\
			\# \mathbf{D}^7_{\boldsymbol{V}}+\# \widetilde{\mathbf{D}}^7_{\boldsymbol{V}}=3\text{dim}(P_{k}(K))-\text{dim}(P_{k+1}(K))+1,\quad \# \mathbf{D}^8_{\boldsymbol{V}}=\text{dim}(P_{r-2}(K)).
		\end{gather*}
		Observe that the above number of degrees of freedom equals the dimension of   $\widetilde{\boldsymbol{V}}_{r-1,k+1}(K)$ given in \eqref{dim}. 
		To establish unisolvence, we  show that if all the degrees of freedom \eqref{dofV1}-\eqref{dofV7} for a given $\v \in \widetilde{\boldsymbol{V}}_{r-1,k+1}(K)$ vanish,  then $\v$ must be identically zero. \par
		Imposing the vanishing conditions $\mathbf{D}^1_{\boldsymbol{V}}(\v) = \mathbf{D}^2_{\boldsymbol{V}}(\v) = \mathbf{D}^4_{\boldsymbol{V}}(\v) = 0$  yields $\mathbf{D}^1_{\boldsymbol{W}}(\nabla \times \v) = \mathbf{D}^2_{\boldsymbol{W}}(\nabla \times \v) = \mathbf{D}^3_{\boldsymbol{W}}(\nabla \times \v) = 0$.
		By setting $\mathbf{D}^3_{\boldsymbol{V}}(\v) = \mathbf{D}^5_{\boldsymbol{V}}(\v) = 0$, the face integral identity
		\begin{equation}\label{rotv0}
			\int_f \nabla \times \v \cdot \boldsymbol{n}_{f} q_m \d f = \int_f \v \cdot  \overrightarrow{\nabla}_{f}\times q_m \d f+\int_{\partial f} \v \cdot \boldsymbol{t}_{\partial f} q_m\d S=0,\quad \forall p_m\in P_m(K),
		\end{equation}
		implies that $\mathbf{D}^4_{\boldsymbol{W}}(\nabla\times \v)=0$.
		Then the unisolvence of the degrees of freedom \eqref{DW1}-\eqref{DW4}  for  $\widehat{\mathbb{B}}_k(K)$ and $\overline{\mathbb{B}}_{k}(f)$  leads to
		\begin{equation}\label{curlv0}
			\nabla\times \v =0 \quad \text{on } \partial K.
		\end{equation} 
		Recall the uniqueness of $\mathbb{E}_{r-1,k+1}(f)$ for $r\ge 2$ established in \cite{ZJQQuadCurl}, as well as the Serendipity property for $r=1$ introduced in \cite{VEMforlowMaxwell}. The identity \eqref{curlv0} and the vanishing degrees of freedom \eqref{dofV3} and \eqref{dofV6} imply that
		\begin{equation*}
			\nabla_{f}\times  \boldsymbol{v}_{\tau}=0,\quad \nabla\cdot \boldsymbol{v}_{\tau} =0,\quad  \boldsymbol{v}_{\tau}\cdot\boldsymbol{t}_{\partial f}=0 \quad \text{on } f \in \partial K,
		\end{equation*}
		which  further yields
		\begin{equation}\label{vt0}
			\boldsymbol{v}_{\tau}=0 \quad \text{on } f \in \partial K.
		\end{equation}
		On the element $K$,
		the homogeneous boundary conditions \eqref{curlv0} and \eqref{vt0} lead to 
		\begin{equation}
			\begin{aligned}
				|\nabla \times \boldsymbol{v}|_{1,K}^2&=\int_K \nabla  \nabla \times \boldsymbol{v}:\nabla  \nabla \times \boldsymbol{v} \d K\\
				&=-\int_K \nabla \times \boldsymbol{v} \cdot \Delta \nabla\times \v \d K+\int_{\partial K} \nabla \times\boldsymbol{v} \cdot \frac{\partial \nabla
					\times\boldsymbol{v}}{\partial \boldsymbol{n}_{\partial K}} \d S\\
				&=-\int_{K} \boldsymbol{v}\cdot  \nabla\times  \Delta \nabla\times \v \d K-\int_{\partial K} \boldsymbol{n}_{\partial K}\times  \boldsymbol{v} \cdot (  \Delta \nabla\times \v) \d S\\
				&=-\int_{K} \boldsymbol{v}\cdot  \nabla\times  \Delta \nabla\times \v \d K.
			\end{aligned}\label{halfnorm}.
		\end{equation}
		Combined  with the definition of \eqref{EnlargedV} and  $\mathbf{D}^{7}_{\boldsymbol{V}}(\boldsymbol{v})=\widetilde{\mathbf{D}}^7_{\boldsymbol{V}}(\boldsymbol{v})=0$, there exists $\boldsymbol{p}_{k-1}\in \boldsymbol{P}_{k-1}(K)$ such that 
		\begin{equation*}
			\begin{aligned}
				|\nabla \times \,\boldsymbol{v}|_{1,K}^2&=-\int_K \boldsymbol{v} \cdot \nabla\times (\boldsymbol{x}_K \times \boldsymbol{p}_{k-1}) \d K\\
				&=-\int_K  \nabla\times \boldsymbol{v}\cdot(\boldsymbol{x}_K \times \boldsymbol{p}_{k-1}) \d K=0.
			\end{aligned}
		\end{equation*}
		Then, applying the Poincaré inequality under the homogeneous boundary condition \eqref{curlv0} yields
		$$\nabla\times \v = 0 \quad \text{in } K.$$
		From the exactness of \eqref{StokesComplex},
		there exists a function $\phi \in H^1_0(K)$ such that $$\boldsymbol{v}= \nabla  \phi  \quad \text{in } K.$$ 
		Since $\nabla \cdot \v \in P_{r-2}(K)$ in $K$, we deduce that $\phi \in U_r(K)$ and that $\phi$ satisfies the vanishing degrees of freedom \eqref{DU1}-\eqref{DU3}. Finally, given $\mathbf{D}_{\boldsymbol{V}}^8(\v)=0$, we get
		\begin{equation*}
			\int_K \phi p_{r-2}\d K =\int_K \phi \nabla \cdot (\boldsymbol{x}_K p_{r-2}) \d K= -\int_K \nabla \phi \cdot (\boldsymbol{x}_K p_{r-2}) \d K =0, \
		\end{equation*}
		which implies $\mathbf{D}_{U}^4(\phi)=0$. Thus we have $\phi =0$, which completes the proof.
	\end{proof}
	\begin{remark}\label{PolyCompl}
		Unlike the $\boldsymbol{H}(\operatorname{grad-curl})$-conforming virtual element space of \cite{VEMforStokes}, based on the biharmonic problem \eqref{biharmonic problem}, our approach uses the $-\operatorname{curl}\Delta\operatorname{curl}$ problem \eqref{curlDeltacurlProblem}. This foundation naturally supports the divergence constraint $\nabla \cdot \boldsymbol{v} \in P_{r-2}(K)$, thus enabling the arbitrary-order polynomial approximation.
		Specifically, from the polynomial decomposition \eqref{Pkdc1}, it holds that
		\begin{equation*}
			\begin{aligned}
				\boldsymbol{P}_m(K) = \nabla P_{m+1}(K)\oplus\boldsymbol{x}\times \boldsymbol{P}_{m-1}(K) &\subset \nabla P_{r}(K)\oplus\boldsymbol{x}\times \boldsymbol{P}_{m-1}(K)  \subset \boldsymbol{V}_{r-1, k+1}(K),
			\end{aligned}
		\end{equation*}
		where $m$=min$\{r-1, k+1\}$. Then, taking $r=k$, $r=k+1$ and $r=k+2$, we have $\boldsymbol{P}_{k-1}(K)\subset\boldsymbol{V}_{k-1, k+1}(K)$, $\boldsymbol{P}_{k}(K)\subset\boldsymbol{V}_{k, k+1}(K)$ and $\boldsymbol{P}_{k+1}(K)\subset\boldsymbol{V}_{k+1, k+1}(K)$, respectively.
	\end{remark}
	\begin{proposition}\label{L2prop}
		The $L^2$- projection $ \boldsymbol{\Pi}_{l}^{0, K}: \boldsymbol{V}_{r-1, k+1}(K) \to \boldsymbol{P}_{l}(K)$ with $l=\min\{r-1,k-1\}$ is computable based on the degrees of freedom \eqref{dofV1}-\eqref{dofV8}.
	\end{proposition}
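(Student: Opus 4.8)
By definition of the $\boldsymbol{L}^2$-projection, computing $\boldsymbol{\Pi}_{l}^{0,K}\boldsymbol{v}$ amounts to computing the moments $\int_K \boldsymbol{v}\cdot\boldsymbol{p}_l\,\d K$ for every $\boldsymbol{p}_l\in\boldsymbol{P}_l(K)$ with $l=\min\{r-1,k-1\}$. The plan is to split $\boldsymbol{p}_l$ through the decomposition \eqref{Pkdc2} (with $\boldsymbol{x}$ replaced by the shifted coordinate $\boldsymbol{x}_K$), writing $\boldsymbol{p}_l=\nabla\times\boldsymbol{\phi}_{l+1}+\boldsymbol{x}_K p_{l-1}$ with $\boldsymbol{\phi}_{l+1}\in\boldsymbol{P}_{l+1}(K)$ and $p_{l-1}\in P_{l-1}(K)$. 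The second summand is immediate, since $l-1\le r-2$ makes $\int_K\boldsymbol{v}\cdot\boldsymbol{x}_K p_{l-1}\,\d K$ one of the volume degrees of freedom $\mathbf{D}^8_{\boldsymbol{V}}$ (and it is void when $l=0$). For the first summand, integration by parts gives
\begin{equation*}
\int_K \boldsymbol{v}\cdot\nabla\times\boldsymbol{\phi}_{l+1}\,\d K = \int_K (\nabla\times\boldsymbol{v})\cdot\boldsymbol{\phi}_{l+1}\,\d K - \int_{\partial K}(\boldsymbol{n}_{\partial K}\times\boldsymbol{v})\cdot\boldsymbol{\phi}_{l+1}\,\d S,
\end{equation*}
a quantity that depends only on $\nabla\times\boldsymbol{\phi}_{l+1}$, so any representative $\boldsymbol{\phi}_{l+1}$ may be fixed.

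For the volume term the key observation is that $\nabla\times\boldsymbol{v}\in\boldsymbol{W}_k(K)$, and that all of its degrees of freedom $\mathbf{D}^1_{\boldsymbol{W}}$--$\mathbf{D}^6_{\boldsymbol{W}}$ are expressible through those of $\boldsymbol{v}$: $\mathbf{D}^1_{\boldsymbol{W}},\mathbf{D}^2_{\boldsymbol{W}},\mathbf{D}^3_{\boldsymbol{W}}$ coincide with $\mathbf{D}^1_{\boldsymbol{V}},\mathbf{D}^2_{\boldsymbol{V}},\mathbf{D}^4_{\boldsymbol{V}}$; $\mathbf{D}^5_{\boldsymbol{W}}$ coincides with $\mathbf{D}^7_{\boldsymbol{V}}$; $\mathbf{D}^6_{\boldsymbol{W}}$ vanishes because $\nabla\cdot\nabla\times\boldsymbol{v}=0$; and the normal face moments $\mathbf{D}^4_{\boldsymbol{W}}$, i.e. $\int_f(\nabla\times\boldsymbol{v})\cdot\boldsymbol{n}_f\,p_m\,\d f$, are recovered from the surface Green identity \eqref{rotv0}, whose right-hand side is supplied by $\mathbf{D}^5_{\boldsymbol{V}}$ (note $m=\max(0,k-2)\le k-2$, with $\overrightarrow{\nabla}_f\times p_m=0$ when $m=0$) and by $\mathbf{D}^3_{\boldsymbol{V}}$. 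Since the $\boldsymbol{L}^2$-projection $\boldsymbol{\Pi}_k^{0,K}$ is computable on $\boldsymbol{W}_k(K)$ from $\mathbf{D}^1_{\boldsymbol{W}}$--$\mathbf{D}^6_{\boldsymbol{W}}$ (this is the purpose of the ``super-enhanced'' constraints in \eqref{WFinal}, cf. \cite{VEMforStokes}), and since $l+1=\min\{r,k\}\le k$, we may replace $\nabla\times\boldsymbol{v}$ by $\boldsymbol{\Pi}_k^{0,K}(\nabla\times\boldsymbol{v})$ and compute $\int_K(\nabla\times\boldsymbol{v})\cdot\boldsymbol{\phi}_{l+1}\,\d K$.

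For the boundary term, on each face $f$ the integrand reduces to $(\boldsymbol{n}_f\times\boldsymbol{v}_\tau)\cdot(\boldsymbol{\phi}_{l+1})_\tau=\boldsymbol{v}_\tau\cdot\big((\boldsymbol{\phi}_{l+1})_\tau\times\boldsymbol{n}_f\big)$, so it suffices to compute $\int_f\boldsymbol{v}_\tau\cdot\boldsymbol{q}\,\d f$ for an arbitrary $\boldsymbol{q}\in\boldsymbol{P}_{l+1}(f)$. Splitting $\boldsymbol{q}=\overrightarrow{\nabla}_f\times s_{l+2}+\boldsymbol{x}_f t_l$ by \eqref{Pkdcf}, the part along $\boldsymbol{x}_f t_l$ with $t_l\in P_l(f)$ is a face degree of freedom $\mathbf{D}^6_{\boldsymbol{V}}$ when $r\ge 2$ (then $l\le r-1=\widehat{r}-1$), and when $r=1$ one has $l=0$ and the integral vanishes by the Serendipity constraint built into $\mathbb{E}_{0,k+1}(f)$ in \eqref{reducedE}. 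For the part along $\overrightarrow{\nabla}_f\times s_{l+2}$ with $s_{l+2}\in P_{l+2}(f)$, the surface Green identity \eqref{rotv0} (with $q_m$ replaced by $s_{l+2}$) yields
\begin{equation*}
\int_f\boldsymbol{v}_\tau\cdot\overrightarrow{\nabla}_f\times s_{l+2}\,\d f = \int_f\big((\nabla\times\boldsymbol{v})\cdot\boldsymbol{n}_f\big)\,s_{l+2}\,\d f + \int_{\partial f}(\boldsymbol{v}\cdot\boldsymbol{t}_{\partial f})\,s_{l+2}\,\d S;
\end{equation*}
here $(\nabla\times\boldsymbol{v})\cdot\boldsymbol{n}_f\in\overline{\mathbb{B}}_k(f)$ has degrees of freedom computable from those of $\boldsymbol{v}$ (as in the previous paragraph) and admits a computable $L^2$-projection up to degree $k+1\ge l+2$, while $\boldsymbol{v}\cdot\boldsymbol{t}_{\partial f}|_e=\boldsymbol{v}\cdot\boldsymbol{t}_e\in P_{r-1}(e)$ is completely determined by $\mathbf{D}^3_{\boldsymbol{V}}$ and may therefore be integrated against $s_{l+2}|_e$. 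Collecting the three contributions and verifying that the degree requirements $l-1\le r-2$, $l+1\le k$, $l+2\le k+1$, $l\le\widehat{r}-1$ all follow from $l=\min\{r-1,k-1\}$ (with the low-order cases handled by the Serendipity constraints) completes the argument.

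The main obstacle is the bookkeeping that routes each piece to an admissible degree of freedom while staying within the polynomial degrees that the (super-)enhanced spaces $\boldsymbol{W}_k(K)$ and $\overline{\mathbb{B}}_k(f)$ make reachable; in particular, the step that ties the construction together is proving that every degree of freedom of $\nabla\times\boldsymbol{v}$ as an element of $\boldsymbol{W}_k(K)$ is expressible through those of $\boldsymbol{v}$, the normal face moments $\mathbf{D}^4_{\boldsymbol{W}}$ requiring the surface integration by parts \eqref{rotv0}. The degenerate low-order situations (e.g. $\widehat{r}=0$ when $r=1$, or $m=0$ when $k\le 2$) must be checked separately, but they are exactly absorbed by the Serendipity-type constraints incorporated in $\mathbb{E}_{0,k+1}(f)$ and $\overline{\mathbb{B}}_k(f)$.
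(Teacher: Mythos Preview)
Your proposal is correct and follows essentially the same route as the paper: decompose $\boldsymbol{p}_l$ via \eqref{Pkdc2}, handle the $\boldsymbol{x}_Kp_{l-1}$ piece by $\mathbf{D}^8_{\boldsymbol{V}}$, integrate the curl piece by parts, and then reduce the volume term to the computable $\boldsymbol{L}^2$-projection on $\boldsymbol{W}_k(K)$ and the boundary term to the computable face projection on $\overline{\mathbb{B}}_k(f)$ via \eqref{Pkdcf} and the surface Green identity. The only cosmetic difference is that the paper fixes the specific representative $\boldsymbol{\phi}_{l+1}=\boldsymbol{x}_K\times\boldsymbol{p}_l$ and invokes $\boldsymbol{\Pi}_{l+1}^{\nabla,K}$ (respectively $\Pi_{l+1}^{\nabla,f}$) directly through the enhancement identities, whereas you appeal more abstractly to the computability of $\boldsymbol{\Pi}_k^{0,K}$ on $\boldsymbol{W}_k(K)$ and of $\Pi_{k+1}^{0,f}$ on $\overline{\mathbb{B}}_k(f)$; both are equivalent.
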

	\begin{proof}
		Using the polynomial decomposition \eqref{Pkdc2},  any  $\boldsymbol{q}_{l} \in \boldsymbol{P}_{l}(K)$ can be expressed as 
		\begin{align*}
			\boldsymbol{q}_{l}&=\nabla\times (\boldsymbol{x}_K\times \boldsymbol{p}_{l})+\boldsymbol{x}_Kq_{l-1}
			\\
			&=\nabla\times (\boldsymbol{x}_K\times\widehat{\boldsymbol{p}}_{l})+ \nabla \times (\boldsymbol{x}_K\times \boldsymbol{p}_{l-2})+\boldsymbol{x}_Kq_{l-1},
		\end{align*}
		where   polynomials $\boldsymbol{p}_{l}\in \boldsymbol{P}_{l}(K)$, $\widehat{\boldsymbol{p}}_{l} \in \boldsymbol{P}_{l}(K)/\boldsymbol{P}_{l-2}(K)$, $\boldsymbol{p}_{l-2} \in \boldsymbol{P}_{l-2}(K)$ and $ q_{l-1} \in P_{l-1}(K)$. 
		For any $\v\in\boldsymbol{V}_{r-1,k+1}(K)$, the projection $\boldsymbol{\Pi}_{l}^{0,K}$ satisfies
		\begin{equation}\label{ComL2Pro}
			\begin{aligned}
				\int_K\boldsymbol{\Pi}_{l}^{0,K} \boldsymbol{v} \cdot \boldsymbol{q}_{l} \d K &=\int_K \boldsymbol{v} \cdot \boldsymbol{q}_{l} \d K \\
				&=\int_K \boldsymbol{v} \cdot \nabla\times (\boldsymbol{x}_K\times\boldsymbol{p}_{l}) +\int_K \boldsymbol{v}\cdot \boldsymbol{x}_Kq_{l-1} \d K \\
				&=\int_K \nabla\times \v \cdot (\boldsymbol{x}_K\times\boldsymbol{p}_{l})\d K +\int_{\partial K}\boldsymbol{v } \times \boldsymbol{n}_{\partial K}\cdot(\boldsymbol{x}_K\times \boldsymbol{p}_{l}) \d S\\
				&\quad +\int_K \boldsymbol{v}\cdot \boldsymbol{x}_Kq_{l-1} \d K\\
				&=\int_K \boldsymbol{\Pi}_{l+1}^{\nabla,K}\nabla\times \boldsymbol{ v}\cdot(\boldsymbol{x}_K\times\widehat{\boldsymbol{p}}_{l})\d K+ \int_K \nabla\times \v \cdot (\boldsymbol{x}_K\times \boldsymbol{p}_{l-2})\d K \\
				&\quad+\int_K \boldsymbol{v}\cdot \boldsymbol{x}_Kq_{l-1} \d K+ \sum_f\int_f (\boldsymbol{n}_f\times(\boldsymbol{x}_K\times \boldsymbol{p}_{l}))_{\tau}\cdot \boldsymbol{\Pi}_{l+1}^{0,f}\boldsymbol{v}_{\tau} \d f.
			\end{aligned}
		\end{equation}
		The right-hand side of \eqref{ComL2Pro} can be computed as follows: the first term is computable via $\boldsymbol{\Pi}^{\nabla,K}_{l+1}$ in $\boldsymbol{W}_{k}(K)$; the second and third terms follow from the degrees of freedom \eqref{dofV7} and \eqref{dofV8}; and the last term, $\boldsymbol{\Pi}^{0,f}_{l+1}$, is obtained using the decomposition \eqref{Pkdcf}, the projection $\Pi_{l+1}^{\nabla,f}$ in $\overline{\mathbb{B}}_{k}(f)$, and the degrees of freedom \eqref{dofV3}, \eqref{dofV5}, and \eqref{dofV6}. The computation for the last term proceeds as follows:
		\begin{equation*}
			\begin{aligned}
				\int_f \boldsymbol{\Pi }_{l+1}^{0,f} \boldsymbol{v}_{\tau}\cdot \boldsymbol{p}_{l+1} \d f&=\int_f \boldsymbol{v}_{\tau}\cdot (\overrightarrow{\nabla}_{f}\times \widehat{p}_{l+2}+\overrightarrow{\nabla}_{f}\times p_m+\boldsymbol{x}_f p_{l})\d f\\
				&=\int_f \Pi_{l+1}^{\nabla,f} \nabla_{f}\times   \boldsymbol{v}_{\tau}\widehat{p}_{l+2} \d f+\int_f \boldsymbol{v}_{\tau}\cdot \overrightarrow{\nabla}_{f}\times p_m \d f-\int_{\partial f} \boldsymbol{v}\cdot\boldsymbol{t}_{\partial f} \widehat{p}_{l+2} \d S\\
				&\quad +\int_f \boldsymbol{v}\cdot \boldsymbol{x}_f p_{l}\d f, \forall \boldsymbol{p}_{l+1} \in \boldsymbol{P}_{l+1}(f),
			\end{aligned}
		\end{equation*}
		where $\widehat{p}_{l}\in P_{l}(f)/P_{m}(f)$, $p_m\in P_m(f)$, and  $p_{l}\in P_{l}(f)$ with $m=\operatorname{max}\{0,k-2\}$. Note that the final term vanishes for $l=0$ due to the definition \eqref{reducedE}.
		The proof is complete.
	\end{proof}
	Gluing the local space $\boldsymbol{V}_{r-1,k+1}(K)$ over all elements $K$ in $\mathcal{T}_h$ produces the global space
	\begin{equation}
		\boldsymbol{V}_{r-1,k+1}(\Omega):=\{ \boldsymbol{v} \in \boldsymbol{V}( \Omega):\boldsymbol{v}|_K \in \boldsymbol{V}_{r-1,k+1}(K),\forall K \in \mathcal{T}_h\}.
	\end{equation}
	We note that the global set of degrees of freedom, defined as the counterpart of \eqref{dofV1}–\eqref{dofV8}, guarantees the conforming property $\nabla\times \boldsymbol{V}_{r-1,k+1}(\Omega) \subset \boldsymbol{H}^1(\Omega)$.
	\subsection{Scalar $L^2$-conforming space}
	We end our construction with the rightmost space in the discrete complex \eqref{DisComplex}. The local space is defined as 
	\begin{equation*}
		Q_{k-1}(K):=P_{k-1}(K),
	\end{equation*}
	with the degrees of freedom 
	\begin{flalign*}
		&\ \bullet \mathbf{D}_{Q}:\text{the volume moments } 
		\int_K q p_{k-1}, \forall  p_{k-1} \in 
		P_{k-1}(K).&
	\end{flalign*}
	The global space $Q_{k-1}(\Omega)$ consists of discontinuous piecewise polynomials:
	\begin{equation*}
		Q_{k-1}(\Omega):=\{q\in L^2(\Omega): q|_K\in P_{k-1}(K), \forall K \in \mathcal{T}_h\}.
	\end{equation*}
	\subsection{The discrete complex}
	In this subsection, we establish the exactness of the discrete complex \eqref{DisComplex} and its connection to the continuous complex \eqref{ContiComplex}.  
	\begin{theorem}
		The discrete complex \eqref{DisComplex} is exact.
	\end{theorem}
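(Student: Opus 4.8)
\emph{Strategy.} Every space in the discrete complex \eqref{DisComplex} is a conforming subspace of the corresponding space in the continuous Stokes complex \eqref{ContiComplex}, which is exact on the contractible domain $\Omega$ and on each star-shaped element $K$. Hence the complex property (a function with zero gradient is constant, $\nabla\times\nabla=0$, $\nabla\cdot\nabla\times=0$) is inherited, and $\mathbb{R}\subset U_r(K)$ because $P_r(f)\subset\mathbb{B}_{r,r}(f)$; it remains to prove the three reverse inclusions together with the surjectivity of $\nabla\cdot$ onto $Q_{k-1}(\Omega)$. Exactness at $U_r(\Omega)$ is immediate. For exactness at $\boldsymbol{V}_{r-1,k+1}(\Omega)$, take $\v\in\boldsymbol{V}_{r-1,k+1}(\Omega)$ with $\nabla\times\v=0$; exactness of \eqref{ContiComplex} yields $v\in H^1(\Omega)$, unique up to a constant, with $\v=\nabla v$. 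I then verify $v\in U_r(\Omega)$ element by element: $\Delta v=\nabla\cdot\v\in P_{r-2}(K)$ from the enlarged-space constraint in \eqref{EnlargedV}; on each face $f$, $\v_\tau=\nabla_f(v|_f)\in\mathbb{E}_{r-1,k+1}(f)$ forces $\Delta_f(v|_f)\in P_{r-1}(f)$ and $v|_e\in P_r(e)$, hence $v|_f\in\mathbb{B}_{r,r}(f)$ (for $r=1$ the serendipity constraint $\int_f\nabla v\cdot\boldsymbol{x}_f\,\d f=0$ of \eqref{SerBf} comes from the corresponding constraint of \eqref{reducedE}); and $v|_{\partial K}\in C^0(\partial K)$ because $v$ is globally single-valued in $H^1(\Omega)$. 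The reverse inclusion $\nabla U_r(K)\subset\boldsymbol{V}_{r-1,k+1}(K)$ is a direct verification of the same defining conditions. No gluing is needed at this node, since the potential is automatically in $H^1(\Omega)$.

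\emph{Exactness at $\boldsymbol{W}_k(\Omega)$: the element-level core.} The inclusion $\nabla\times\boldsymbol{V}_{r-1,k+1}(\Omega)\subset\boldsymbol{W}_k(\Omega)\cap\ker(\nabla\cdot)$ is the complex property together with the design $\nabla\times\boldsymbol{V}_{r-1,k+1}(K)\subset\boldsymbol{W}_k(K)$ encoded in the restriction \eqref{FinalSpace}. For the converse I argue first on one element. Let $\w\in\boldsymbol{W}_k(K)$ with $\nabla\cdot\w=0$; by \eqref{EnlargedW} there are $q\in L^2_0(K)$ and $\boldsymbol{p}_{k-1}\in\boldsymbol{P}_{k-1}(K)$ with $-\Delta\w+\nabla q=\boldsymbol{x}_K\times\boldsymbol{p}_{k-1}$, so $\w$ is the velocity of the Stokes system on $K$ with body force $\boldsymbol{x}_K\times\boldsymbol{p}_{k-1}$ and boundary datum $\boldsymbol{g}:=\w|_{\partial K}\in\boldsymbol{\mathbb{B}}_k(\partial K)$, which satisfies $\int_{\partial K}\boldsymbol{g}\cdot\boldsymbol{n}_{\partial K}\,\d S=0$. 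Using the compatibility \eqref{compatibility}, pick $\boldsymbol{h}\in\mathbb{E}_{r-1,k+1}(\partial K)$ with $\nabla_{\partial K}\times\boldsymbol{h}=\boldsymbol{g}\cdot\boldsymbol{n}_{\partial K}$, so $(\boldsymbol{h},\boldsymbol{g})\in\boldsymbol{Y}(\partial K)$, and apply Theorem \ref{NonHomThem} on $K$ with $j_d=0$ to get $\v\in\boldsymbol{V}(K)$ with $\nabla\times\v=\w$, $\nabla\cdot\v=0$, $\boldsymbol{\gamma}_{\tau}(\v)=\boldsymbol{h}$, $\boldsymbol{\gamma}_{\curl}(\v)=\boldsymbol{g}$. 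I then check $\v\in\boldsymbol{V}_{r-1,k+1}(K)$: the two boundary conditions in \eqref{EnlargedV} hold by construction of $\boldsymbol{h}$ and $\boldsymbol{g}$; $\nabla\times\Delta(\nabla\times\v)=\nabla\times\Delta\w=-\nabla\times(\boldsymbol{x}_K\times\boldsymbol{p}_{k-1})\in\nabla\times(\boldsymbol{x}_K\times\boldsymbol{P}_{k-1}(K))$; $\nabla\cdot\v=0\in P_{r-2}(K)$; and the restriction \eqref{FinalSpace} holds because, with $\nabla\times\v=\w$, it is exactly the restriction \eqref{WFinal} satisfied by $\w$. Hence $\w\in\nabla\times\boldsymbol{V}_{r-1,k+1}(K)$.

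\emph{Globalization and surjectivity at $Q_{k-1}(\Omega)$.} For a global divergence-free $\w\in\boldsymbol{W}_k(\Omega)$, exactness of \eqref{ContiComplex} gives a potential $\v_0\in\boldsymbol{V}(\Omega)$; the element potentials $\v_K\in\boldsymbol{V}_{r-1,k+1}(K)$ built above differ from $\v_0|_K$, and from one another across any shared face $f$, only by surface gradients (their scalar curls coincide, both equal to $\w\cdot\boldsymbol{n}_f$), and, using the exactness at $U_r(\Omega)$ and $\boldsymbol{V}_{r-1,k+1}(\Omega)$ already established, these mismatches are absorbed into a single correction $\nabla u$ with $u\in U_r(\Omega)$, yielding a global $\v\in\boldsymbol{V}_{r-1,k+1}(\Omega)$ with $\nabla\times\v=\w$; contractibility of $\Omega$ is what rules out a global obstruction. (Equivalently, exactness at $\boldsymbol{W}_k(\Omega)$ falls out of a dimension count combining Proposition \ref{DofProp}, the dimensions of $U_r(\Omega)$, $\boldsymbol{W}_k(\Omega)$, $Q_{k-1}(\Omega)$, and the Euler relation for the contractible polyhedral mesh, once the remaining nodes are known exact.) Finally, $\nabla\cdot:\boldsymbol{W}_k(\Omega)\to Q_{k-1}(\Omega)$ is surjective: given $q\in Q_{k-1}(\Omega)$, lift it to $\v^{*}\in\boldsymbol{H}^1(\Omega)$ with $\nabla\cdot\v^{*}=q$ (surjectivity of $\nabla\cdot:\boldsymbol{H}^1(\Omega)\to L^2(\Omega)$, itself from the continuous complex), and apply the canonical $\boldsymbol{W}_k$-interpolation, which commutes with the divergence, so that $\nabla\cdot(\boldsymbol{I}_{\boldsymbol{W}}\v^{*})$ equals the $L^2(\Omega)$-projection of $\nabla\cdot\v^{*}$ onto $Q_{k-1}(\Omega)$, namely $q$; a mild regularization of $\v^{*}$ handles the point-evaluation degrees of freedom, and locally $\nabla\cdot\boldsymbol{W}_k(K)=P_{k-1}(K)$ by \cite{VEMforStokes}.

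\emph{Main obstacle.} The crux is the element-level claim that the continuous vector potential supplied by Theorem \ref{NonHomThem} actually lies in the finite-dimensional space $\boldsymbol{V}_{r-1,k+1}(K)$; this is precisely where the construction of the enlarged space \eqref{EnlargedV} around the $-\operatorname{curl}\Delta\operatorname{curl}$ problem, and the compatibility \eqref{compatibility} between $\mathbb{E}_{r-1,k+1}(\partial K)$ and $\boldsymbol{\mathbb{B}}_k(\partial K)$, are essential---without them the potential would carry the wrong tangential-trace or divergence structure. The second delicate point is the passage from this local exactness to global exactness at $\boldsymbol{W}_k(\Omega)$, which is the discrete analogue of the classical construction of a globally single-valued divergence-free vector potential and again relies on the contractibility of $\Omega$.
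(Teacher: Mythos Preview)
Your proof is correct and, at the crucial node, follows the same route as the paper: the element-level identity $\nabla\times\boldsymbol{V}_{r-1,k+1}(K)=\boldsymbol{W}_k(K)\cap\ker(\nabla\cdot)$ is obtained by recognizing a divergence-free $\w\in\boldsymbol{W}_k(K)$ as the velocity of a local Stokes problem and invoking Theorem~\ref{NonHomThem} (equivalently Remark~\ref{NonHomEqu}) to produce a vector potential that lands in the enlarged space \eqref{EnlargedV}; the compatibility of the enhancement constraints \eqref{WFinal} and \eqref{FinalSpace} with $\nabla\times$ then transfers the conclusion to the restricted spaces, exactly as you observe.

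The only noteworthy difference is in presentation and scope. The paper reduces first to the enlarged spaces $\widetilde{\boldsymbol{V}}_{r-1,k+1}(K)$ and $\widetilde{\boldsymbol{W}}_k(K)$ and then states the globalization in a single sentence (``by patching the local spaces''), while for the remaining nodes of the complex it simply defers to \cite[Theorem~4.1]{VEMforStokes}. You instead spell out exactness at $\boldsymbol{V}_{r-1,k+1}(\Omega)$ and surjectivity at $Q_{k-1}(\Omega)$ directly, and give a more explicit (if still sketchy) globalization via tangential-trace mismatches and the alternative dimension count. Both are valid; your version is more self-contained, while the paper's is more economical by leaning on the existing literature.
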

	\begin{proof}
		From the perspective of the vector potential problem for the Stokes equations with non-homogeneous boundary conditions,  we specifically prove the exactness 
		\begin{equation}\label{shortcomplex}
			\nabla\times\boldsymbol{V}_{r-1,k+1}(\Omega)= \boldsymbol{W}_k(\Omega)\cap \operatorname{ker(\nabla\cdot)}.
		\end{equation}
		The proof for the remaining part of the sequence follows the same lines as in \cite[Theorem 4.1]{VEMforStokes}.
		\par 
		Since the restrictions in \eqref{WFinal} and \eqref{FinalSpace} are compatible with the curl operator, it suffices to establish the exactness between the local enlarged spaces $\widetilde{\boldsymbol{V}}_{r-1,k+1}(K)$ and $\widetilde{\boldsymbol{W}}_k(K)$; the global result \eqref{shortcomplex} then follows by patching the local spaces.
		\par 
		Observe that for any $\boldsymbol{u}^{K} \in \boldsymbol{W}_k(K) \cap \ker(\nabla\cdot)$, it is the unique solution to the Stokes problem \eqref{StokesProblem} on the local element $K$, with given data $$\boldsymbol{f} \in \boldsymbol{x}_K \times \boldsymbol{P}_{k-1}(K) \quad\text{and} \quad  \boldsymbol{g}_1 \in \mathbb{B}_k(\partial K).$$  
		Similarly, for any $\boldsymbol{\psi}^{K} \in \boldsymbol{V}_{r-1,k+1}(K)$, it is the unique solution to the $-\operatorname{curl} \Delta \operatorname{curl}$ problem \eqref{curlDeltacurlProblem} on $K$, with data $$\boldsymbol{j} \in \nabla \times (\boldsymbol{x}_K \times \boldsymbol{P}_{k-1}(K)), \quad  j_d \in P_{r-1}(K), $$ 
		$$(\boldsymbol{h}, \boldsymbol{g}_2) \in (\mathbb{E}_{r-1,k+1}(\partial K),\mathbb{B}_k(\partial K)) \subset \boldsymbol{Y}(\partial K).$$
		It therefore follows from  Remark \ref{NonHomEqu} and Theorem \ref{NonHomThem} that when setting $\boldsymbol{j} = \nabla \times \boldsymbol{f}$ and $\boldsymbol{g}_1 = \boldsymbol{g}_2$, we obtain
		\begin{equation*}
			\nabla \times \boldsymbol{\psi}^{K} = \boldsymbol{u}^{K},
		\end{equation*}
		which implies
		\begin{equation}
			\nabla \times \widetilde{\boldsymbol{V}}_{r-1,k+1}(K) = \widetilde{\boldsymbol{W}}_k(K) \cap \ker(\nabla \cdot).
		\end{equation}
	\end{proof}
	
	\begin{remark}
		The $\boldsymbol{H}(\operatorname{grad-curl})$-conforming virtual element spaces presented in this work generalize three families of $\boldsymbol{H}(\operatorname{grad-curl})$-conforming finite elements \cite{ZZMQuadCurl} to polyhedral meshes, which correspond to the specific parameter choices $r = k$, $k+1$, and $k+2$. In the lowest-order case $r=k=1$, the degrees of freedom for our virtual element spaces, defined for general polyhedra and illustrated in Figure \ref{fig:dofgradcurl}, precisely recover those of the discrete finite element complex from \cite{ZZMQuadCurl} when restricted to simplex meshes.
	\end{remark}
	\begin{figure}
		\centering
		\includegraphics[width=1.0\linewidth, trim=0 3cm 0 3cm, clip]{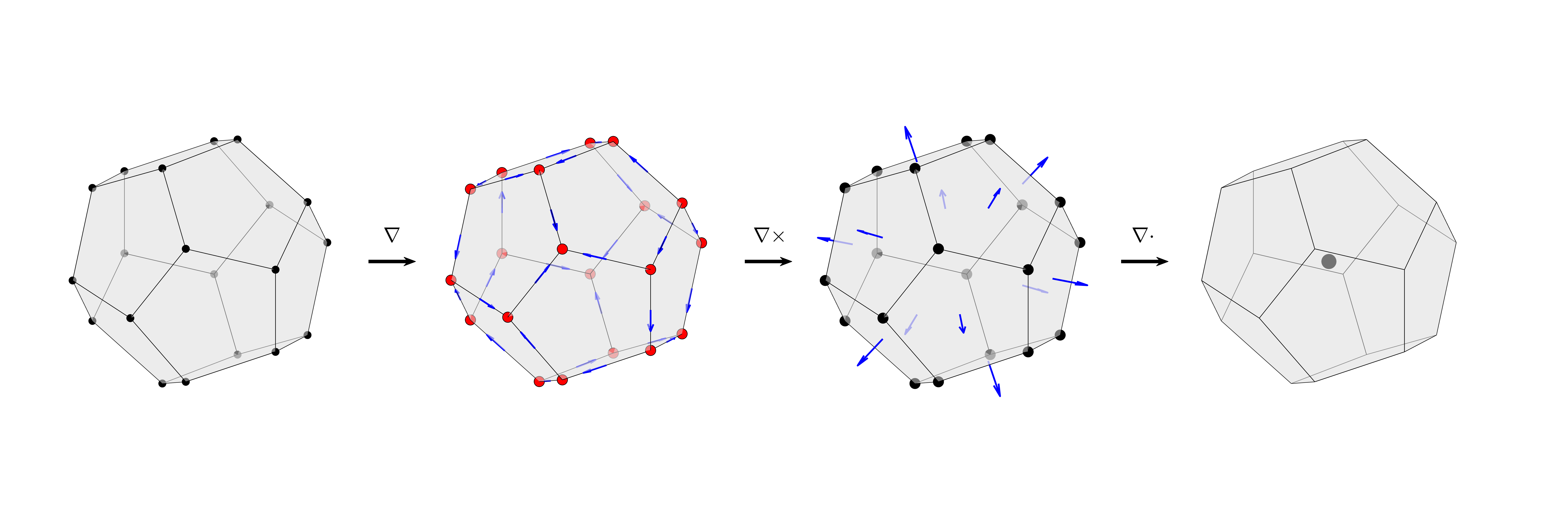}
		\caption{The lowest-order ($r = k = 1$) virtual element complex \eqref{DisComplex} on a polyhedral element.}
		\label{fig:dofgradcurl}
	\end{figure}
	\par
	For $s>\frac{1}{2}$, the following interpolation operators, determined  by the degrees of freedom of their virtual element spaces, are given by
	\begin{gather*}
		J_h:  H^{s+1}(\Omega) \to U_r(\Omega),\quad
		\boldsymbol{I}_h: \boldsymbol{V}^s(\Omega)\to \boldsymbol{V}_{r-1,k+1}(\Omega),\\
		\boldsymbol{J}_h: \boldsymbol{H}^{s+1}(\Omega) \to \boldsymbol{W}_k(\Omega), \quad 
		P_h:L_2(\Omega) \to Q_{k-1}(\Omega),
	\end{gather*}
	where
	$$
	\boldsymbol{V}^s(\Omega):= \{\v \in \boldsymbol{H}^s(\Omega): \nabla\times \v \in \boldsymbol{H}^{s+1}(\Omega)\}.
	$$
	The continuous embedding $H^{s+1}(\Omega) \hookrightarrow C^{0,s-\frac{1}{2}}(\Omega)$  and the trace theorem \eqref{Trace0} ensure that $J_h$, $\boldsymbol{J}_h$, and $\boldsymbol{I}_h$ are well-defined.  
	\begin{remark}
		The regularity result in Remark \ref{Regu} implies that for some $s > \frac{1}{2}$, the solution $\boldsymbol{\psi}$ to the grad-curl problem \eqref{VectorPotentialProblem} has the regularity $\boldsymbol{\psi} \in \boldsymbol{V}^s(\Omega)$, whence the interpolant $\boldsymbol{I}_h \boldsymbol{\psi}$ is well-defined.
	\end{remark}
	\begin{proposition}
		The last two rows of the following diagram commute.
		\begin{equation}\label{CommutDigram}
			\begin{tikzcd}
				\mathbb{R}\arrow[r, "\subset"] & H^1(\Omega) \arrow[r, "\nabla"] \arrow[d,"{\rotatebox{90}{$\subset$}}"] & \boldsymbol{V}( \Omega)  \arrow[r, "\nabla \times"] \arrow[d, "{\rotatebox{90}{$\subset$}}"] & \boldsymbol{H}^1( \Omega) \arrow[r, "\nabla\cdot"] \arrow[d,"{\rotatebox{90}{$\subset$}}" ] & L^2(\Omega) \arrow[r] \arrow[d, ] & 0 \\
				\mathbb{R} \arrow[r, "\subset"] & H^{s+1}(\Omega) \arrow[r, "\nabla"] \arrow[d, "J_h"] & \boldsymbol{V}^s(\Omega) \arrow[r, "\operatorname{\nabla\times}"] \arrow[d, "\boldsymbol{I}_h"] & \boldsymbol{H}^{s+1}( \Omega) \arrow[r, "\nabla\cdot"] \arrow[d, "\boldsymbol{J}_h"] & L^2(\Omega) \arrow[r] \arrow[d, "P_h"] & 0 
				\\
				\mathbb{R}\arrow[r, "\subset"] & U_r(\Omega) \arrow[r, "\nabla"] & \boldsymbol{V}_{r-1,k+1}(\Omega) \arrow[r, "\nabla \times"] & \boldsymbol{W}_{k}(\Omega) \arrow[r, "\nabla\cdot"] & Q_{k-1}(\Omega) \arrow[r] & 0 .
			\end{tikzcd}
		\end{equation}
	\end{proposition}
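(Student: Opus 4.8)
## Proof Proposal

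The plan is to verify the commutativity of the two squares involving $\nabla$ (from $H^{s+1}(\Omega)$ to $\boldsymbol{V}^s(\Omega)$) and $\nabla\times$ (from $\boldsymbol{V}^s(\Omega)$ to $\boldsymbol{H}^{s+1}(\Omega)$), since the commutativity of the remaining square with $\nabla\cdot$ follows exactly as in \cite[Theorem 4.1]{VEMforStokes}. Each square commutes if and only if the defining degrees of freedom of the target virtual element space annihilate the difference of the two compositions; thus the argument is a routine but careful bookkeeping over the degrees of freedom \eqref{dofV1}--\eqref{dofV8} for $\boldsymbol{V}_{r-1,k+1}(\Omega)$ and the analogous DOFs for $\boldsymbol{W}_k(\Omega)$ and $U_r(\Omega)$.

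\emph{Square $\nabla\times\boldsymbol{I}_h=\boldsymbol{J}_h\,\nabla\times$.} Given $\v\in\boldsymbol{V}^s(\Omega)$, I would show that $\nabla\times\boldsymbol{I}_h\v$ and $\boldsymbol{J}_h(\nabla\times\v)$ share all the degrees of freedom $\mathbf{D}^1_{\boldsymbol{W}}$--$\mathbf{D}^6_{\boldsymbol{W}}$. The key observation is that the DOFs of $\boldsymbol{V}_{r-1,k+1}(K)$ were deliberately designed so that $\mathbf{D}^1_{\boldsymbol{V}},\mathbf{D}^2_{\boldsymbol{V}},\mathbf{D}^4_{\boldsymbol{V}},\mathbf{D}^7_{\boldsymbol{V}}$ directly encode $\nabla\times\v$ through exactly the quantities $\mathbf{D}^1_{\boldsymbol{W}},\mathbf{D}^2_{\boldsymbol{W}},\mathbf{D}^3_{\boldsymbol{W}},\mathbf{D}^5_{\boldsymbol{W}}$ applied to $\nabla\times\v$; these match immediately. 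For $\mathbf{D}^4_{\boldsymbol{W}}$ (the normal face moments of $\nabla\times\v$) and $\mathbf{D}^6_{\boldsymbol{W}}$ (the interior divergence moments), I would use the face Stokes identity \eqref{rotv0}, namely $\int_f(\nabla\times\v)\cdot\boldsymbol{n}_f\,q_m=\int_f\v\cdot\overrightarrow{\nabla}_f\times q_m+\int_{\partial f}\v\cdot\boldsymbol{t}_{\partial f}\,q_m$, to rewrite the normal-curl face moments in terms of $\mathbf{D}^3_{\boldsymbol{V}}$ and $\mathbf{D}^5_{\boldsymbol{V}}$; since $\boldsymbol{I}_h\v$ is defined to reproduce these latter DOFs of $\v$, the identity forces the normal-curl face moments of $\nabla\times\boldsymbol{I}_h\v$ to equal those of $\nabla\times\v$, which are reproduced by $\boldsymbol{J}_h$. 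Finally, $\nabla\cdot(\nabla\times\v)=0$ shows $\mathbf{D}^6_{\boldsymbol{W}}$ is automatically matched (both sides yield zero divergence moments). By unisolvence of $\boldsymbol{W}_k(K)$ the two functions coincide on each $K$, hence globally.

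\emph{Square $\nabla J_h=\boldsymbol{I}_h\,\nabla$.} Given $v\in H^{s+1}(\Omega)$, I would check that $\nabla J_h v$ and $\boldsymbol{I}_h(\nabla v)$ agree on all DOFs \eqref{dofV1}--\eqref{dofV8}. Since $\nabla\times\nabla v=0$, the curl-type DOFs $\mathbf{D}^1_{\boldsymbol{V}},\mathbf{D}^2_{\boldsymbol{V}},\mathbf{D}^4_{\boldsymbol{V}},\mathbf{D}^7_{\boldsymbol{V}}$ vanish on both $\nabla J_h v$ and $\boldsymbol{I}_h(\nabla v)$ (the interpolant of a gradient has vanishing curl-DOFs by the already-established square just above, or directly because $\nabla\times\nabla v=0$ and the commuting relation for $\boldsymbol{W}_k$). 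For the tangential/edge DOFs $\mathbf{D}^3_{\boldsymbol{V}},\mathbf{D}^5_{\boldsymbol{V}},\mathbf{D}^6_{\boldsymbol{V}}$ and the volume moments $\mathbf{D}^8_{\boldsymbol{V}}$, I would integrate by parts: e.g. $\int_e\nabla v\cdot\boldsymbol{t}_e\,p_{r-1}=\int_e\partial_{\boldsymbol{t}_e}(v)\,p_{r-1}$ is determined by the pointwise values and edge moments of $v$, which are precisely the DOFs $\mathbf{D}^1_U,\mathbf{D}^2_U$ that $J_h$ reproduces; similarly $\int_K\nabla v\cdot\boldsymbol{x}_K\,p_{r-2}=-\int_K v\,\nabla\cdot(\boldsymbol{x}_K p_{r-2})$ reduces to $\mathbf{D}^4_U$ after absorbing the lower-order term, and the face DOFs $\mathbf{D}^6_{\boldsymbol{V}}$ reduce via $\int_f(\nabla v)_\tau\cdot\boldsymbol{x}_f\,p_{\widehat r-1}$ and integration by parts on $f$ to the surface DOFs $\mathbf{D}^3_U$ and $\mathbf{D}^{1,2}_U$ of $v|_f$. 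Unisolvence then gives $\nabla J_h v=\boldsymbol{I}_h(\nabla v)$ elementwise, hence globally, and the constant-preservation on the left-most column $\mathbb{R}$ is trivial.

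\emph{Main obstacle.} The delicate point is the face-moment matching for $\mathbf{D}^4_{\boldsymbol{W}}$ and $\mathbf{D}^5_{\boldsymbol{V}},\mathbf{D}^6_{\boldsymbol{V}}$: these involve the tangential and normal components on faces, and the correct pairing of $\overrightarrow{\nabla}_f\times$, $\boldsymbol{x}_f$, and the boundary contributions $\int_{\partial f}$ must be tracked carefully so that the surface Stokes theorem \eqref{rotv0} cleanly identifies the interpolated quantities with the prescribed DOFs of $\v$ (and of its trace on $f$). One must also handle the serendipity case $r=1$ separately, where $\widehat r=0$ and the face moment $\mathbf{D}^6_{\boldsymbol{V}}$ degenerates; here the extra integral constraint built into $\mathbb{E}_{0,k+1}(f)$ (and into $\mathbb{B}_{1,1}(f)$) must be invoked to close the argument, exactly paralleling the role it played in the proof of Proposition~\ref{L2prop}. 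Everything else is a mechanical DOF-by-DOF verification.
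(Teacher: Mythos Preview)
Your proposal is correct and follows essentially the same approach as the paper: a DOF-by-DOF verification, using the face Stokes identity \eqref{rotv0} to match the normal face moments $\mathbf{D}^4_{\boldsymbol{W}}$ with $\mathbf{D}^3_{\boldsymbol{V}}$ and $\mathbf{D}^5_{\boldsymbol{V}}$, and then invoking unisolvence of $\boldsymbol{W}_k(K)$. The paper only spells out the square $\nabla\times\boldsymbol{I}_h=\boldsymbol{J}_h\nabla\times$ and dismisses the others as ``similar'', so your treatment of $\nabla J_h=\boldsymbol{I}_h\nabla$ is in fact more detailed than the paper's; your flagged ``main obstacle'' concerning the serendipity case $r=1$ is not actually needed as a separate argument here, since the unisolvence of $\boldsymbol{V}_{r-1,k+1}(K)$ and $\boldsymbol{W}_k(K)$ already absorbs those constraints uniformly.
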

	
	\begin{proof}
		\par 
		We take the middle columns as an example.
		Let $\boldsymbol{v} \in \boldsymbol{V}^{s}(\Omega) $. The commutativity of $\boldsymbol{I}_h$ and $\boldsymbol{j}_{h}$  means that 
		\begin{equation}\label{commut}
			\nabla \times \boldsymbol{I}_h \boldsymbol{v} =\boldsymbol{j}_{h} \nabla \times \boldsymbol{v}.
		\end{equation}
		On each edge $e$ of $K$, the degrees of freedom \eqref{dofV1},  \eqref{dofV2}, \eqref{DW1}, and \eqref{DW2} yield 
		\begin{equation}\label{EdgeCom}
			\nabla \times \boldsymbol{I}_h \v|_e =\boldsymbol{J}_h \nabla\times \v |_e.	
		\end{equation}
		Combining this with the normal moments  \eqref{dofV5} and \eqref{DW4}, we have on each face $f \in \partial K$:
		\begin{equation}
			\begin{aligned}
				\int_f	\nabla\times \boldsymbol{I}_h \v \cdot \boldsymbol{n}_f q_m \d f
				&=\int_{\partial f} \nabla_{f}\times   \boldsymbol{I}_h \v q_m \d S
				= \int_{\partial f} \boldsymbol{I}_h \v \cdot \boldsymbol{t}_{\partial f} q_m \d S+ \int_f \boldsymbol{I}_h \v \cdot  \overrightarrow{\nabla}_{f}\times  q_m \d f\\
				&= \int_{\partial f}  \v \cdot \boldsymbol{t}_{\partial f} q_m \d S+ \int_f  \v \cdot  \overrightarrow{\nabla}_{f}\times  q_m \d f\\
				&=\int_f \nabla_{f}\times   \v  q_m \d f=\int_f \nabla \times \v \cdot \boldsymbol{n}_f q_m \d f\\
				&=\int_f (\boldsymbol{J}_h \nabla \times \v )\cdot \boldsymbol{n}_f q_m \d f,
				\quad \forall q_m \in P_m(f),
			\end{aligned}
		\end{equation}
		where $ m = \operatorname{max}\{0,k-2\}$.
		Moreover, the degrees of freedom of the tangential component \eqref{dofV4} and \eqref{DW3} imply that 
		\begin{equation}\label{TanMoment}
			\int_f(\nabla\times \boldsymbol{I}_h \v)_{\tau} q_{k-2} \d f  = \int_f (\boldsymbol{J}_h\nabla\times \v)_{\tau} q_{k-2} \d f, \quad \forall q_{k-2} \in P_{k-2}(f).
		\end{equation}
		It follows from the definitions of $\widehat{\mathbb{B}}_k(f)$ and $\overline{\mathbb{B}}_k(f)$, together with \eqref{EdgeCom}-\eqref{TanMoment}, that 
		\begin{equation}\label{commut1}
			(\nabla \times \boldsymbol{I}_h\boldsymbol{v} -\boldsymbol{j}_{h}\nabla \times\boldsymbol{v})|_{\partial K} =0. 
		\end{equation}
		On the element $K$, a straightforward verification shows
		\begin{equation}\label{commut2}
			\begin{aligned}
				(\nabla \times\boldsymbol{I}_h \boldsymbol{v}, \boldsymbol{x}_K \times \boldsymbol{p}_{k-3})_K&=  (\nabla\times \boldsymbol{v}, \boldsymbol{x}_K\times \boldsymbol{p}_{k-3})_K\\
				&=(\boldsymbol{J}_h\nabla\times \boldsymbol{v},\boldsymbol{x}_K\times \boldsymbol{p}_{k-3})_K
			\end{aligned}
		\end{equation}
		In view of \eqref{commut1} and \eqref{commut2},
		the unisolvence of the degrees of freedom for $\boldsymbol{W}_k(K)$ leads to \eqref{commut}.
		The commutativity of other columns can be proved in a similar way.
	\end{proof}
	\subsection{Interpolation error estimates}
	As indicated in Remark \ref{PolyCompl}, the choice of $r$ influences the polynomial compatibility. For simplicity, we restrict our attention to the case $r = k$ in the remainder of this paper, which minimizes the degrees of freedom; analogous arguments apply to other cases. This subsection establishes interpolation error estimates for the local space $\boldsymbol{V}_{k-1,k+1}(K)$.\par 
	We begin by introducing the $\boldsymbol{H}(\operatorname{curl})$-conforming virtual element space \cite{VEMforMaxwell,VEMforGener}$, $ with a slight modification of polynomial degrees. For $k\ge 2$, the local face space is defined as
	\begin{align*}
		\mathbb{E}^e_{k-1, k+1}(f)=\left\{ \boldsymbol{v} \in \boldsymbol{L}^2(f):  \nabla\cdot \boldsymbol{v}\in P_{k-1}(f), \nabla_{f}\times  \boldsymbol{v}\in P_{k}(f),\right. \\
		\left.(\boldsymbol{v}\cdot \boldsymbol{t}_{\partial f})\in P_{k-1}(e), \forall e\in \partial f\right\},
	\end{align*} 
	For $k=1$, it is given by 
	\begin{equation*}
		\begin{aligned}
			\mathbb{E}^e_{0,2}(f):=\left\{\boldsymbol{v} \in \boldsymbol{L}^2(f): \nabla \cdot  \boldsymbol{v}\in P_{0}(f) ,  \nabla_{f}\times   \boldsymbol{v} \in P_1(f),\left(\boldsymbol{v} \cdot \boldsymbol{t}_{\partial f}\right)|_e \in P_{0}(e), \right.\\
			\left.
			\forall e\in \partial f,\int_f \v \cdot \boldsymbol{x}_fp_0 =0, \forall p_0\in P_0(f)
			\right\}.
		\end{aligned}
	\end{equation*}
	The corresponding local element space is 
	\begin{equation}\label{Vke}
		\begin{aligned}
			\boldsymbol{V}^e_{k-1, k+1}(K):=\left\{\v \in  \boldsymbol{L}^2(K):\nabla \cdot \v \in P_{k-2}(K),\nabla \times \nabla \times  \v \in \boldsymbol{P}_{k-1}(K), \right. \\
			\left.  \v_{\tau} \in \mathbb{E}^e_{k-1,k+1}(f), \forall f \in \partial K \right\},
		\end{aligned}
	\end{equation}
	with the following degrees of freedom 
	\begin{flalign} \label{edgeDof1}
		&\	\bullet \mathbf{D}^1_{\boldsymbol{V}^e}: \text{on each edge } e \text{ of } K \text{ the moments } \frac{1}{|e|}\int_e \v\cdot \boldsymbol{t}_e p_{k-1} \d e, \\
		&\	\bullet \mathbf{D}^2_{\boldsymbol{V}^e}: \text{the face moments of } \frac{1}{|f|} \int_{f}  \v_{\tau}\cdot \overrightarrow{\nabla}_{f}\times  p_{k} \d f ,  \forall p_{k}\in P_{k}(f),\\
		&\ \bullet \mathbf{D}^3_{\boldsymbol{V}^e}: \text{the  face moments  } \frac{1}{|f|}\int_{f}  \boldsymbol{v}_{\tau}\cdot \boldsymbol{x}_f p_{\widehat{k}-1} \d f, \forall p_{\widehat{k}-1} \in P_{k-1}(f), &\\
		&\  \bullet \mathbf{D}^4_{\boldsymbol{V}^e}: \text{the volume moments of } \int_K \v \cdot \boldsymbol{x}_K p_{k-2} \d K, \forall p_{k-2}\in P_{k-2}(K)&\\
		&\	\bullet \mathbf{D}^5_{\boldsymbol{V}^e}: \text{the volume moments of } \frac{1}{|K|}\int_K \nabla \times  \v\cdot( \boldsymbol{x}_K\times\boldsymbol{p}_{k-1} ) \d K,\label{edgeDof5} \\
		\nonumber &\quad \quad \quad \quad \quad \quad \quad \quad \quad \quad \quad 
		\quad \quad \quad \quad \quad \quad \quad \, \, \, \forall \boldsymbol{p}_{k-1} \in \boldsymbol{P}_{k-1}(K).
	\end{flalign}
	Here, $\widehat{k}$ is defined as in \eqref{rwidehat}. According to \cite[Proposition 3.7]{VEMforMaxwell}, the $\boldsymbol{L}^2$-projection $\boldsymbol{\Pi}_{k-1}^{0,K}:\boldsymbol{V}^e_{k-1,k+1}(K) \to \boldsymbol{P}_{k-1}(K)$ is computable from the above degrees of freedom.
	As established  in \cite[Lemma 4.4]{VEMforGener}, the following auxiliary bound holds for $\boldsymbol{V}^e_{k-1, k+1}(K)$.
	Using an analogous argument, this bound extends to $\boldsymbol{V}_{k-1,k+1}(K)$ and their direct sum. For completeness, we present  a brief proof.
	\begin{lemma}\label{lowbound}
		For each $\boldsymbol{v} \in  \boldsymbol{V}^e_{k-1, k+1}(K)$, $ \boldsymbol{V}_{k-1, k+1}(K)$ or their sum, there holds
		\begin{align*}
			\|\boldsymbol{v}\|_{K} &\lesssim h_K\|\nabla \times \boldsymbol{v}\|_{K}+ \sup _{p_{k-2} \in P_{k-2}(K)} \frac{\int_{K} \boldsymbol{v} \cdot (\boldsymbol{x}_{K} p_{k-2}) \d K}{\left\|\boldsymbol{x}_{K}  p_{k-2}\right\|_{K}}.\\
			&\quad + \sum_{f\in \partial K}\left( h_f^{\frac{3}{2}}\|\nabla_{f}\times  \boldsymbol{v}_{\tau}\|_{f}+ h_f\|\v_{\tau} \cdot \boldsymbol{t}_{\partial f}\|_{\partial f} + \sup _{p_{k-1} \in P_{k-1}(f)} \frac{\int_{f} \boldsymbol{v}_{\tau} \cdot (\boldsymbol{x}_{f} p_{k-1}) \d f}{\left\|\boldsymbol{x}_{f}  p_{k-1}\right\|_{f}} \right).
		\end{align*}
	\end{lemma}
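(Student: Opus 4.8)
The plan is to follow the argument of \cite[Lemma 4.4]{VEMforGener}, which treats $\boldsymbol{V}^e_{k-1,k+1}(K)$, and to observe that it only uses structural properties shared by all three spaces. First I would introduce the $\boldsymbol{L}^2(K)$-orthogonal Helmholtz decomposition $\boldsymbol{v}=\nabla\phi+\boldsymbol{z}$, where $\phi\in H^1(K)/\mathbb{R}$ and $\boldsymbol{z}\in\boldsymbol{L}^2(K)$ satisfies $\nabla\cdot\boldsymbol{z}=0$ in $K$ and $\boldsymbol{z}\cdot\boldsymbol{n}_{\partial K}=0$ on $\partial K$. Such a decomposition exists, and is $\boldsymbol{L}^2(K)$-orthogonal, because $K$ is star-shaped (hence contractible with connected boundary): one solves the Neumann problem $\Delta\phi=\nabla\cdot\boldsymbol{v}$, $\partial_{\boldsymbol{n}}\phi=\boldsymbol{v}\cdot\boldsymbol{n}_{\partial K}$ (whose compatibility is the divergence theorem), sets $\boldsymbol{z}=\boldsymbol{v}-\nabla\phi$, and checks orthogonality by integration by parts. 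Then $\|\boldsymbol{v}\|_K^2=\|\nabla\phi\|_K^2+\|\boldsymbol{z}\|_K^2$, so it suffices to bound the two pieces, noting $\nabla\times\boldsymbol{z}=\nabla\times\boldsymbol{v}$ and that on each face $f\in\partial K$ one has $\boldsymbol{z}|_f=\boldsymbol{z}_\tau$ with $\nabla_f\times\boldsymbol{z}_\tau=(\nabla\times\boldsymbol{v})\cdot\boldsymbol{n}_f=\nabla_f\times\boldsymbol{v}_\tau$.

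The solenoidal part is the routine one: $\boldsymbol{z}$ is divergence-free with vanishing normal trace, and $K$ carries no nontrivial harmonic vector fields, so the Friedrichs inequality on $K$ gives $\|\boldsymbol{z}\|_K\lesssim h_K\|\nabla\times\boldsymbol{z}\|_K=h_K\|\nabla\times\boldsymbol{v}\|_K$, with the constant scaling like $h_K$ by dilating $K$ to a configuration of unit diameter and invoking the mesh regularity \textbf{(A1)}. This produces the first term on the right-hand side.

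The irrotational part is the crux, and it generates the remaining terms. The potential $\phi$ solves $-\Delta\phi=-\nabla\cdot\boldsymbol{v}\in P_{k-2}(K)$; the obstacle is that $\boldsymbol{v}\cdot\boldsymbol{n}_{\partial K}$ is not a degree of freedom of any of the three spaces, so one cannot close the estimate through the natural Neumann datum. Instead I would work with the Dirichlet trace of $\phi$: on each face $\nabla_f(\phi|_f)=(\nabla\phi)_\tau=\boldsymbol{v}_\tau-\boldsymbol{z}_\tau$, and since $\phi\in H^1(K)$ its boundary trace is edge-continuous on $\partial K$, so $\phi|_{\partial K}$ is pinned down, up to a single global constant, by the face data $\boldsymbol{v}_\tau-\boldsymbol{z}_\tau$. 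After normalizing $\phi$, a stability estimate for the resulting Dirichlet Poisson problem on $K$, together with a scaling argument (equivalence of norms on unit-diameter reference configurations, using \textbf{(A1)}--\textbf{(A3)}), bounds $\|\nabla\phi\|_K$ by: (i) the polynomial right-hand side $\nabla\cdot\boldsymbol{v}\in P_{k-2}(K)$, which through the identity $(\nabla\cdot\boldsymbol{v},q)_K=-(\boldsymbol{v},\nabla q)_K+\langle\boldsymbol{v}\cdot\boldsymbol{n}_{\partial K},q\rangle_{\partial K}$ for $q\in P_{k-1}(K)$ and the polynomial decompositions \eqref{Pkdc1}--\eqref{Pkdcf} is controlled by the interior moments $\sup_{p_{k-2}}\int_K\boldsymbol{v}\cdot(\boldsymbol{x}_Kp_{k-2})\,\d K/\|\boldsymbol{x}_Kp_{k-2}\|_K$ together with the face moments against $\boldsymbol{x}_fp_{k-1}$; and (ii) the boundary potential, whose face surface-gradients $\boldsymbol{v}_\tau-\boldsymbol{z}_\tau$ are in turn estimated face-by-face by the two-dimensional analogue of the present lemma applied to $\boldsymbol{v}_\tau$ (and to $\boldsymbol{z}_\tau$, whose surface scalar curl equals $\nabla_f\times\boldsymbol{v}_\tau$), which yields precisely the three face quantities $h_f^{3/2}\|\nabla_f\times\boldsymbol{v}_\tau\|_f$, $h_f\|\boldsymbol{v}_\tau\cdot\boldsymbol{t}_{\partial f}\|_{\partial f}$ and $\sup_{p_{k-1}}\int_f\boldsymbol{v}_\tau\cdot(\boldsymbol{x}_fp_{k-1})\,\d f/\|\boldsymbol{x}_fp_{k-1}\|_f$, once the face $\boldsymbol{L}^2$-norms are rescaled to volume-norm units. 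Summing over $f$ and collecting with the solenoidal bound gives the assertion.

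Finally, for the extension beyond $\boldsymbol{V}^e_{k-1,k+1}(K)$: the argument uses nothing about $\boldsymbol{v}$ beyond $\boldsymbol{v}\in\boldsymbol{L}^2(K)$, $\nabla\times\boldsymbol{v}\in\boldsymbol{L}^2(K)$, $\nabla\cdot\boldsymbol{v}\in P_{k-2}(K)$, and the fact that on each face $\boldsymbol{v}_\tau$ lies in a face space with $H^1$-regular surface scalar curl and edge-continuous tangential trace. These all hold for $\boldsymbol{V}_{k-1,k+1}(K)$ by the definitions \eqref{EnlargedV}--\eqref{FinalSpace} with $r=k$, and, since none of them is lost under addition, for the sum $\boldsymbol{V}^e_{k-1,k+1}(K)+\boldsymbol{V}_{k-1,k+1}(K)$ as well; the same proof therefore applies verbatim. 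I expect the main obstacle to be exactly the irrotational part — reconstructing the scalar potential's Dirichlet trace from tangential face data and closing the Poisson estimate without the normal-trace degree of freedom — together with the careful bookkeeping of the various powers of $h_K$ and $h_f$, which must be tracked through a scaling argument rather than inferred from dimensional counting alone.
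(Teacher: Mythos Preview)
Your Helmholtz decomposition is the wrong one. The argument in \cite[Lemma~4.4]{VEMforGener}, which the paper follows, takes $\phi\in H_0^1(K)$ solving the \emph{Dirichlet} problem $\Delta\phi=\nabla\cdot\boldsymbol{v}$ and writes $\boldsymbol{v}=\nabla\times\boldsymbol{\rho}+\nabla\phi$. With this choice $\nabla\phi$ has zero tangential trace, so the entire tangential boundary datum is carried by $\nabla\times\boldsymbol{\rho}$, and one gets $\|\nabla\times\boldsymbol{\rho}\|_K\lesssim h_K\|\nabla\times\boldsymbol{v}\|_K+h_K^{1/2}\|\boldsymbol{v}\times\boldsymbol{n}_{\partial K}\|_{\partial K}$. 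The boundary term is then bounded face by face by the two-dimensional analogue applied directly to $\boldsymbol{v}_\tau$, which \emph{does} lie in the face space and hence satisfies the required structural hypotheses ($\nabla_f\cdot\boldsymbol{v}_\tau\in P_{k-1}(f)$, $\boldsymbol{v}_\tau\cdot\boldsymbol{t}_e\in P_{k-1}(e)$). For the gradient part, $\phi|_{\partial K}=0$ kills the boundary term in $(\boldsymbol{v},\nabla\phi)_K=-(\nabla\cdot\boldsymbol{v},\phi)_K$, and since $\nabla\cdot\boldsymbol{v}\in P_{k-2}(K)$ this is controlled by the interior moments against $\boldsymbol{x}_Kp_{k-2}$ plus the already-bounded $\|\nabla\times\boldsymbol{\rho}\|_K$.

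Your Neumann choice breaks both halves. In your item~(i) the integration by parts produces $\langle\boldsymbol{v}\cdot\boldsymbol{n}_{\partial K},q\rangle_{\partial K}$, but the face moments in the statement involve the \emph{tangential} component $\boldsymbol{v}_\tau$, not the normal trace; there is no mechanism here to control $\boldsymbol{v}\cdot\boldsymbol{n}_f$. In your item~(ii) you need $\|\boldsymbol{z}_\tau\|_f$, yet $\boldsymbol{z}_\tau$ does not belong to any of the face spaces: its surface divergence and its edge tangential traces are not polynomial, so the two-dimensional lemma does not apply to it. Nor can you recover $\|\boldsymbol{z}_\tau\|_f$ from the clean volume bound $\|\boldsymbol{z}\|_K\lesssim h_K\|\nabla\times\boldsymbol{v}\|_K$ by a trace inequality, because on a general (non-convex) star-shaped polyhedron $\boldsymbol{z}\in\boldsymbol{H}(\operatorname{curl};K)\cap\boldsymbol{H}_0(\operatorname{div};K)$ is only in $\boldsymbol{H}^s(K)$ for some $s>\tfrac12$, not $\boldsymbol{H}^1(K)$. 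Switching to the Dirichlet decomposition removes both obstacles at once.
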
 
	\begin{proof}
		It follows from \cite[Lemma 4.4]{VEMforGener} that the following Helmholtz decomposition holds:
		\begin{equation*}
			\boldsymbol{v} = \nabla\times \boldsymbol{\rho}+\nabla\phi,
		\end{equation*}
		where $\phi \in H^1_0(K)$ satisfies the Poisson equation
		\begin{equation*}
			\Delta\phi = \div \v \text{ in } H^{-1}(K),
		\end{equation*}
		and $\boldsymbol{\rho}\in \boldsymbol{H}(\curl;K)\cap\boldsymbol{H}(\div;K)$ satisfies weakly
		\begin{equation*}
			\begin{aligned}
				\nabla\times\nabla\times \boldsymbol{\rho}&=\nabla\times \boldsymbol{v},\quad \nabla\cdot \boldsymbol{\rho}=0 \quad \text{ in } K,\\
				(\nabla\times\boldsymbol{\rho})\times\boldsymbol{n}_{\partial K}&=\boldsymbol{v}\times \boldsymbol{n}_{\partial K},\quad \boldsymbol{\rho}\cdot \boldsymbol{n}_{\partial K}=0 \quad \text{ on }\partial K.
			\end{aligned}
		\end{equation*}
		Moreover,  the following identities hold:
		\begin{equation}\label{DecElement}
			(\nabla\times \boldsymbol{\rho},\nabla\phi)_K=0,\quad \|\boldsymbol{v}\|^2_K=\|\nabla\times \boldsymbol{\rho}\|_K^2+\|\nabla\phi\|_K^2,
		\end{equation}
		along with the estimates
		\begin{gather}
			\|\nabla\times \boldsymbol{\boldsymbol{\rho}}\|_K\lesssim h_K\|\nabla\times \boldsymbol{v}\|_K + h_K^{\frac{1}{2}}\|\boldsymbol{v}\times\boldsymbol{n}_{\partial K}\|_{\partial K} ,\\
			\|\nabla\phi\|_K^2\lesssim \left(h_K\|\nabla\times \boldsymbol{v}\|_K+ h_K^{\frac{1}{2}}\|\boldsymbol{v}\times\boldsymbol{n}_{\partial K}\|_{\partial K}+\sup _{p_{k-2} \in P_{k-2}(K)} \frac{\int_{K} \boldsymbol{v} \cdot (\boldsymbol{x}_{K} p_{k-2}) \d K}{\left\|\boldsymbol{x}_{K}  p_{k-2}\right\|_{K}}\right)\|\v\|_K.
		\end{gather}
		For the second inequality, we have used the fact that $\nabla \cdot \boldsymbol{v} \in P_{k-2}(K)$, which follows from the definitions of the spaces in \eqref{FinalSpace} and \eqref{Vke}.
		A similar Helmholtz decomposition on the face $f$ from \cite[Lemma 3.1]{VEMforGener} yields 
		\begin{equation}\label{FaceEsti}
			\|\boldsymbol{v}_{\tau}\|_f\lesssim h_f\|\nabla_f\times \v_{\tau}\|+ h_f^{\frac{1}{2}}\|\boldsymbol{v}_\tau\cdot \boldsymbol{t}_{\partial f}\|_{\partial f} + \sup _{p_{k-1} \in P_{k-1}(f)} \frac{\int_{f} \boldsymbol{v}_{\tau} \cdot (\boldsymbol{x}_{f} p_{k-1}) \d f}{\left\|\boldsymbol{x}_{f}  p_{k-1}\right\|_{f}},
		\end{equation}
		which is valid for $\boldsymbol{v}_{\tau}$ satisfying $\nabla_f \cdot \boldsymbol{v}_{\tau} \in P_{k-1}(f)$ and $\v\cdot \boldsymbol{t}_e \in P_{k-1}(e)$ on each edge $e\in \partial f$. Combing \eqref{DecElement}-\eqref{FaceEsti}, we have the desired result.
	\end{proof}
	An auxiliary interpolation operator $\boldsymbol{I}_h^e: \{\v\in \boldsymbol{H}^s(K): \nabla\times\v \in \boldsymbol{H}^s(K)\}  \to \boldsymbol{V}^e_{k-1,k+1}(K)$  with $s >\frac{1}{2}$, defined based on the degrees of freedom   \eqref{edgeDof1}-\eqref{edgeDof5},
	satisfies the  following interpolation error estimates  \cite[Theorem 4.5]{VEMforGener}. Here, $[s]$ denotes the greatest integer strictly less than $s$. For example, $[1]=0$.
	\begin{lemma}
		Let $\boldsymbol{v} \in \boldsymbol{H}^s(K)$ with $\nabla \times \boldsymbol{v} \in \boldsymbol{H}^{l}(K)$, where $\frac{1}{2} < s \leq k$ and $\frac{1}{2} < l \leq k+1$. Define $\widetilde{l} = \min\{l, [s]\}$. Then the following estimates hold:
		\begin{align}\label{Ihe}
			\|\boldsymbol{v}-\boldsymbol{I}_h^e\boldsymbol{v}\|_K &\lesssim h_K^s|\boldsymbol{v}|_{s, K} + h_K^{\widetilde{l}+1}|\nabla\times\v|_{\widetilde{l},K}+ h_K\|\nabla\times \v\|_K,  \\
			\label{CurlIhe}
			\|\nabla \times(\boldsymbol{v}-\boldsymbol{I}_h^e\boldsymbol{v})\|_{K}&\lesssim h_K^{l}|\nabla \times \boldsymbol{v}|_{l, K}.
		\end{align}
		The third term on the right-hand side of \eqref{Ihe} vanishes when $s \geq 1$. Furthermore, for any face $f \in \partial K$, 
		it holds
		\begin{equation}
			\label{RotIne}
			\|\nabla_{f}\times  (\v-\boldsymbol{I}_h^e\v)_{\tau}\|_f \lesssim h_f^{l-\frac{1}{2}}|\nabla_{f}\times  \v_{\tau}|_{l-\frac{1}{2},f}.
		\end{equation} 
	\end{lemma}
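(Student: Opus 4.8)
The plan is to follow the standard two-step route for virtual element interpolation estimates. First, reduce each of the three bounds to polynomial approximation: let $\boldsymbol{q}$ be an averaged Taylor (Dupont--Scott) polynomial of $\boldsymbol{v}$; since $\boldsymbol{P}_{k-1}(K)\subset\boldsymbol{V}^e_{k-1,k+1}(K)$ the interpolant reproduces polynomials of degree $\le k-1$, so $\boldsymbol{v}-\boldsymbol{I}_h^e\boldsymbol{v}=(\boldsymbol{v}-\boldsymbol{q})-\boldsymbol{I}_h^e(\boldsymbol{v}-\boldsymbol{q})$. Second, control the norm of the virtual correction $\boldsymbol{I}_h^e(\boldsymbol{v}-\boldsymbol{q})$ — which is not a polynomial and so cannot be estimated directly — by feeding it into the auxiliary bound of Lemma~\ref{lowbound}, using the defining property that $\boldsymbol{I}_h^e$ reproduces the degrees of freedom of its argument. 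Throughout, the passage between an element or face $G$ and its reference configuration relies on the usual scaling and multiplicative trace inequalities valid under (A1)--(A3).

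I would first establish the curl estimate \eqref{CurlIhe}, as it is needed in \eqref{Ihe}. The key point is a commuting property: combining $\mathbf{D}^2_{\boldsymbol{V}^e}$ with $\mathbf{D}^1_{\boldsymbol{V}^e}$ through the surface Stokes theorem on each face turns them into the moments of $(\nabla\times\boldsymbol{v})\cdot\boldsymbol{n}_f$ against $P_k(f)$, while $\mathbf{D}^5_{\boldsymbol{V}^e}$ supplies the interior moments of $\nabla\times\boldsymbol{v}$ against $\boldsymbol{x}_K\times\boldsymbol{P}_{k-1}(K)$; since $\nabla\times\boldsymbol{v}$ is divergence-free, these data determine $\nabla\times\boldsymbol{I}_h^e\boldsymbol{v}$ and coincide with the defining moments of the curl-image virtual element space, which contains the divergence-free part of $\boldsymbol{P}_k(K)$. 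Hence $\nabla\times\boldsymbol{I}_h^e\boldsymbol{q}=\nabla\times\boldsymbol{q}$ whenever $\nabla\times\boldsymbol{q}\in\boldsymbol{P}_k(K)$. Writing $\nabla\times(\boldsymbol{v}-\boldsymbol{I}_h^e\boldsymbol{v})$ as the error of this curl interpolant applied to $\nabla\times\boldsymbol{v}-\boldsymbol{p}$ with $\boldsymbol{p}\in\boldsymbol{P}_k(K)$, bounding the interpolant's defining moments by $\|\cdot\|_K+h_K|\cdot|_{1,K}$ (standard scaling plus the multiplicative trace inequality), and applying Bramble--Hilbert to $\nabla\times\boldsymbol{v}-\boldsymbol{p}$ yields $\|\nabla\times(\boldsymbol{v}-\boldsymbol{I}_h^e\boldsymbol{v})\|_K\lesssim h_K^{l}|\nabla\times\boldsymbol{v}|_{l,K}$.

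For the $\boldsymbol{L}^2$ estimate \eqref{Ihe}, take $\boldsymbol{q}\in\boldsymbol{P}_{[s]}(K)$ (note $[s]\le k-1$) and apply Lemma~\ref{lowbound} to $\boldsymbol{I}_h^e(\boldsymbol{v}-\boldsymbol{q})$. The interior moment $\int_K\boldsymbol{I}_h^e(\boldsymbol{v}-\boldsymbol{q})\cdot(\boldsymbol{x}_K p_{k-2})$ and the face quantities $\int_f(\boldsymbol{I}_h^e(\boldsymbol{v}-\boldsymbol{q}))_{\tau}\cdot(\boldsymbol{x}_f p_{k-1})$, $(\boldsymbol{I}_h^e(\boldsymbol{v}-\boldsymbol{q}))_{\tau}\cdot\boldsymbol{t}_{\partial f}$, $\nabla_f\times(\boldsymbol{I}_h^e(\boldsymbol{v}-\boldsymbol{q}))_{\tau}$ all equal the corresponding quantities for $\boldsymbol{v}-\boldsymbol{q}$ by definition of the interpolant; a scaling argument and the multiplicative trace inequality bound them by $\|\boldsymbol{v}-\boldsymbol{q}\|_K$ and weighted traces of $\boldsymbol{v}-\boldsymbol{q}$, hence by $h_K^{s}|\boldsymbol{v}|_{s,K}$, with the face-curl contribution handled as in \eqref{CurlIhe} restricted to $f$. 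The remaining term $h_K\|\nabla\times\boldsymbol{I}_h^e(\boldsymbol{v}-\boldsymbol{q})\|_K$ splits into $h_K\|\nabla\times(\boldsymbol{v}-\boldsymbol{I}_h^e\boldsymbol{v})\|_K\lesssim h_K^{l+1}|\nabla\times\boldsymbol{v}|_{l,K}$ via \eqref{CurlIhe}, plus $h_K\|\nabla\times(\boldsymbol{v}-\boldsymbol{q})\|_K$; since $\nabla\times\boldsymbol{q}\in\boldsymbol{P}_{[s]-1}(K)$, Bramble--Hilbert gives $h_K^{\widetilde{l}+1}|\nabla\times\boldsymbol{v}|_{\widetilde{l},K}$ with $\widetilde{l}=\min\{l,[s]\}$, and for $s<1$ no approximation of the curl is available, leaving precisely the residual $h_K\|\nabla\times\boldsymbol{v}\|_K$ that disappears once $s\ge1$. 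Finally, the face estimate \eqref{RotIne} is the two-dimensional analogue of \eqref{CurlIhe} on each $f$: $(\boldsymbol{I}_h^e\boldsymbol{v})_{\tau}\in\mathbb{E}^e_{k-1,k+1}(f)$ has scalar curl ranging over $\boldsymbol{P}_k(f)$, so $\nabla_f\times(\boldsymbol{I}_h^e\boldsymbol{v})_{\tau}$ is the face interpolant of $\nabla_f\times\boldsymbol{v}_{\tau}$ reproducing $\boldsymbol{P}_k(f)$; since $\nabla_f\times\boldsymbol{v}_{\tau}=(\nabla\times\boldsymbol{v})\cdot\boldsymbol{n}_f$ inherits only $\boldsymbol{H}^{l-1/2}(f)$ regularity from $\nabla\times\boldsymbol{v}\in\boldsymbol{H}^{l}(K)$ by the trace theorem, Bramble--Hilbert on $f$ produces the rate $h_f^{\,l-1/2}$.

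I expect the main obstacle to be the bookkeeping inside Lemma~\ref{lowbound}: one must verify that $\boldsymbol{I}_h^e$ genuinely reproduces each moment appearing in the auxiliary bound, so that the data of the virtual correction are exactly those of $\boldsymbol{v}-\boldsymbol{q}$, and one must confirm that the truncation $\widetilde{l}=\min\{l,[s]\}$ together with the trace exponent $l-\tfrac12$ on faces is optimal — the polynomial $\boldsymbol{q}$ forced on us by the reproduction argument simultaneously caps the achievable curl approximation order, which is exactly what makes $\widetilde{l}$ appear. Everything else is routine scaling, multiplicative trace, and Bramble--Hilbert estimates on star-shaped elements and faces.
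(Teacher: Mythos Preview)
The paper does not actually prove this lemma: it is stated as a direct citation of \cite[Theorem~4.5]{VEMforGener} (``satisfies the following interpolation error estimates \cite[Theorem 4.5]{VEMforGener}''), with no proof given in the paper itself beyond the one-line observation afterward that for $s<1$ one has $\widetilde{l}=0$ so the second and third terms of \eqref{Ihe} coincide.

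Your outline is essentially the argument one finds in that reference: reproduce polynomials via $\boldsymbol{P}_{k-1}(K)\subset\boldsymbol{V}^e_{k-1,k+1}(K)$, feed the virtual part $\boldsymbol{I}_h^e(\boldsymbol{v}-\boldsymbol{q})$ into the auxiliary bound (Lemma~\ref{lowbound}), exploit that the interpolant matches the degrees of freedom appearing there, and finish with scaling plus Bramble--Hilbert. One small sharpening: for the face estimate \eqref{RotIne} you can be more explicit than ``face interpolant reproducing $P_k(f)$''. Combining $\mathbf{D}^1_{\boldsymbol{V}^e}$ and $\mathbf{D}^2_{\boldsymbol{V}^e}$ via the surface Stokes identity shows that $\nabla_f\times(\boldsymbol{I}_h^e\boldsymbol{v})_{\tau}$ is exactly the $L^2(f)$-orthogonal projection $\Pi^{0,f}_k(\nabla_f\times\boldsymbol{v}_{\tau})$, from which \eqref{RotIne} is immediate by standard $L^2$-projection approximation in $H^{l-1/2}(f)$. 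Otherwise the plan is sound and matches the cited proof.
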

	We observe that when $s < 1$, $\widetilde{l} = \min\{l, 0\} = 0$, so that $h_K^{\widetilde{l}+1}|\nabla\times\boldsymbol{v}|_{\widetilde{l},K} = h_K\|\nabla\times\boldsymbol{v}\|_K$, thereby trivially allowing the omission of the third term in \eqref{Ihe} regardless of $s$.
	We also introduce the interpolation error estimates for $\boldsymbol{j}_{h}$, see \cite[Theorem 5]{JM2023} and \cite[Lemma 4.2]{VEMforMHD}.
	\begin{lemma}
		For every $\boldsymbol{v} \in \boldsymbol{H}^1_0(\Omega) \cap \boldsymbol{H}^{s}(\Omega)$ with $\frac{3}{2} < s \le k+1$, it holds that
		\begin{equation}\label{interpolation_J}
			\|\boldsymbol{v}-\boldsymbol{J}_h\boldsymbol{v}\|_K+h_K|\boldsymbol{v}-\boldsymbol{J}_h\boldsymbol{v}|_{1, K} \lesssim h_K^s|\boldsymbol{v}|_{s, K}.
		\end{equation}
	\end{lemma}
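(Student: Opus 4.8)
The plan is to argue element by element and run the standard three-step virtual-element interpolation estimate: \emph{(i)} $\boldsymbol{J}_h$ reproduces $\boldsymbol{P}_k(K)$; \emph{(ii)} a scaling-robust bound for $\boldsymbol{J}_h$ on $\boldsymbol{W}_k(K)$ in terms of the degrees of freedom; \emph{(iii)} Bramble--Hilbert polynomial approximation. For \emph{(i)}: $\boldsymbol{P}_k(K)\subset\boldsymbol{W}_k(K)$ by polynomial completeness (cf. Remark~\ref{PolyCompl} and \cite{VEMforStokes}), since every $\boldsymbol{p}\in\boldsymbol{P}_k(K)$ satisfies $\boldsymbol{\Pi}_k^{\nabla,K}\boldsymbol{p}=\boldsymbol{p}$, so the restriction in \eqref{WFinal} is automatic, and unisolvence of \eqref{DW1}--\eqref{DW6} then forces $\boldsymbol{J}_h\boldsymbol{p}=\boldsymbol{p}$. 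Choosing a polynomial $\boldsymbol{p}\in\boldsymbol{P}_k(K)$ to be fixed below and writing $\boldsymbol{v}-\boldsymbol{J}_h\boldsymbol{v}=(\boldsymbol{v}-\boldsymbol{p})-\boldsymbol{J}_h(\boldsymbol{v}-\boldsymbol{p})$, the triangle inequality reduces the claim to controlling $\|\boldsymbol{v}-\boldsymbol{p}\|_K+h_K|\boldsymbol{v}-\boldsymbol{p}|_{1,K}$ and $\|\boldsymbol{J}_h(\boldsymbol{v}-\boldsymbol{p})\|_K+h_K|\boldsymbol{J}_h(\boldsymbol{v}-\boldsymbol{p})|_{1,K}$ separately.

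For $\boldsymbol{p}$ I would take the degree-$k$ averaged Taylor polynomial of $\boldsymbol{v}$ over the ball provided by assumption (A1). With $\boldsymbol{w}:=\boldsymbol{v}-\boldsymbol{p}$ and $3/2<s\le k+1$, the Dupont--Scott form of the Bramble--Hilbert lemma yields the local estimates
\begin{equation*}
|\boldsymbol{w}|_{j,K}\lesssim h_K^{\,s-j}|\boldsymbol{v}|_{s,K}\ (j=0,1),\qquad \|\boldsymbol{w}\|_{\boldsymbol{L}^\infty(K)}\lesssim h_K^{\,s-3/2}|\boldsymbol{v}|_{s,K},\qquad \|\boldsymbol{w}\|_{0,\partial K}\lesssim h_K^{\,s-1/2}|\boldsymbol{v}|_{s,K},
\end{equation*}
where the last two bounds use $s>3/2$ (so that $\boldsymbol{H}^s(K)\hookrightarrow\boldsymbol{C}^0(\overline K)$) and $s\le k+1$ (so degree $k$ suffices for the approximation order). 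The first bound already gives $\|\boldsymbol{v}-\boldsymbol{p}\|_K+h_K|\boldsymbol{v}-\boldsymbol{p}|_{1,K}\lesssim h_K^{\,s}|\boldsymbol{v}|_{s,K}$.

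For the interpolant term I would invoke the $h_K$-robust norm equivalence on the finite-dimensional space $\boldsymbol{W}_k(K)$: with the degrees of freedom \eqref{DW1}--\eqref{DW6} rescaled by the appropriate powers of $h_K$, one has $h_K^{-1}\|\boldsymbol{w}_h\|_K+|\boldsymbol{w}_h|_{1,K}\lesssim\big(\textstyle\sum(\text{scaled dofs of }\boldsymbol{w}_h)^2\big)^{1/2}$ for all $\boldsymbol{w}_h\in\boldsymbol{W}_k(K)$; this is where the mesh regularity (A1)--(A3) and the elliptic/inf-sup structure built into \eqref{EnlargedW}--\eqref{WFinal} are used, and it is precisely the statement of \cite[Theorem 5]{JM2023} and \cite[Lemma 4.2]{VEMforMHD}. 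Since the degrees of freedom of $\boldsymbol{J}_h\boldsymbol{w}$ coincide with those of $\boldsymbol{w}$ by construction, it remains to bound the scaled dofs of $\boldsymbol{w}$: the vertex and interior-edge point values by $h_K^{3/2}\|\boldsymbol{w}\|_{\boldsymbol{L}^\infty(K)}$; the tangential and normal face moments \eqref{DW3}--\eqref{DW4} by Cauchy--Schwarz against $\|\boldsymbol{w}\|_{0,\partial K}$; and the volume moments \eqref{DW5}--\eqref{DW6} by Cauchy--Schwarz against $\|\boldsymbol{w}\|_K$. Inserting the Bramble--Hilbert estimates gives $\|\boldsymbol{J}_h\boldsymbol{w}\|_K+h_K|\boldsymbol{J}_h\boldsymbol{w}|_{1,K}\lesssim h_K^{\,s}|\boldsymbol{v}|_{s,K}$, and adding the two contributions finishes the proof.

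\textbf{Main obstacle.} The only genuinely delicate ingredient is the $h_K$-robust norm equivalence on $\boldsymbol{W}_k(K)$: the members of $\boldsymbol{W}_k(K)$ are not known explicitly, so one must exploit the local Stokes-type problem defining $\widetilde{\boldsymbol{W}}_k(K)$ in \eqref{EnlargedW} to convert knowledge of the degrees of freedom into an $\boldsymbol{H}^1$-bound, and then track every power of $h_K$ passing through the trace and Sobolev embeddings so that no spurious negative power of $h_K$ survives in the final estimate. Since this has already been carried out in \cite[Theorem 5]{JM2023} and \cite[Lemma 4.2]{VEMforMHD}, in the paper this step would simply be cited, the remaining polynomial-approximation and dof-bounding steps being elementary.
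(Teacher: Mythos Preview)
Your proposal is correct and is exactly the standard argument underlying the references the paper cites; in fact the paper gives no proof of this lemma at all but simply invokes \cite[Theorem 5]{JM2023} and \cite[Lemma 4.2]{VEMforMHD}, which you have anticipated. One small slip: the degree of freedom \eqref{DW6} reads $\int_K(\nabla\cdot\boldsymbol{w})\widehat{p}_{k-1}\,\d K$, so it is bounded by $|\boldsymbol{w}|_{1,K}$ (or via integration by parts by $\|\boldsymbol{w}\|_K+h_K^{1/2}\|\boldsymbol{w}\|_{\partial K}$) rather than directly by $\|\boldsymbol{w}\|_K$; this changes nothing in the final estimate.
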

	The classical trace inequality on an element or face $G$ is given by
	\begin{equation}\label{H1trace}
		\|v\|_{\partial G} \lesssim h_G^{-\frac{1}{2}}\|v\|_G + h_G^{\frac{1}{2}}|v|_{1,G},\quad \forall v\in H^1(G).
	\end{equation}
	By restricting the boundary $\partial K$ to each face $f \in\partial K$, we generalize the trace inequality in \cite[Lemma 2.1]{VEMforSmallEdge} to functions in the higher-regularity space $H^{s+1}(K)$ with $s > \frac{1}{2}$:
	\begin{equation}\label{Hs1trace}
		|v|_{s+\frac{1}{2},f} \lesssim h_K^{[s]-s}|v|_{[s+1],K} + |v|_{s+1,K},\quad \forall v\in H^{s+1}(K).
	\end{equation}
	Building on the above preparation, we establish the interpolation error estimates for $\boldsymbol{I}_h$ as follows:
	\begin{theorem}
		If $\boldsymbol{v} \in \boldsymbol{H}^s(\Omega)$ with $\nabla \times \boldsymbol{v} \in \boldsymbol{H}^{s+1}(\Omega)$, where $\frac{1}{2} < s \le k$, then the following estimates hold:
		\begin{align}
			\label{Intpolation1}
			\|\boldsymbol{v}-\boldsymbol{I}_h\boldsymbol{v}\|_K&\lesssim h^s_K|\boldsymbol{v}|_{s, K}+ h_K^{[s]+1}|\nabla\times\v|_{[s],K}\\
			&\quad+h_K^{[s]+2}|\nabla\times \v|_{[s+1],K}+h^{s+2}_K|\nabla \times\boldsymbol{v}|_{s+1, K},\\
			\label{Intpolation2}
			\|\nabla \times(\boldsymbol{v}-\boldsymbol{I}_h\boldsymbol{v})\|_K&\lesssim h_K^{s+1}|\nabla \times\boldsymbol{v}|_{s+1, K},\\
			\label{Intpolation3}
			|\nabla \times(\boldsymbol{v}-\boldsymbol{I}_h\boldsymbol{v})|_{1, K}&\lesssim h^s_K|\nabla \times\boldsymbol{v}|_{s+1, K}.
		\end{align}
	\end{theorem}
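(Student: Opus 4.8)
The plan is to handle the two curl-estimates \eqref{Intpolation2}--\eqref{Intpolation3} separately from the $\boldsymbol{L}^2$-estimate \eqref{Intpolation1}, using throughout the commutativity relation $\nabla\times\boldsymbol{I}_h\boldsymbol{v}=\boldsymbol{J}_h\nabla\times\boldsymbol{v}$ from \eqref{commut}. For \eqref{Intpolation2} and \eqref{Intpolation3} I would simply observe that $\nabla\times(\boldsymbol{v}-\boldsymbol{I}_h\boldsymbol{v})=\nabla\times\boldsymbol{v}-\boldsymbol{J}_h\nabla\times\boldsymbol{v}$ and, since $\nabla\times\boldsymbol{v}\in\boldsymbol{H}^{s+1}(\Omega)$ with $\tfrac32<s+1\le k+1$, apply the interpolation estimate \eqref{interpolation_J} for $\boldsymbol{J}_h$ with regularity exponent $s+1$ (the estimate being local on $K$, the global boundary condition appearing in that lemma is immaterial). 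This yields $\|\nabla\times\boldsymbol{v}-\boldsymbol{J}_h\nabla\times\boldsymbol{v}\|_K+h_K|\nabla\times\boldsymbol{v}-\boldsymbol{J}_h\nabla\times\boldsymbol{v}|_{1,K}\lesssim h_K^{s+1}|\nabla\times\boldsymbol{v}|_{s+1,K}$, which is precisely \eqref{Intpolation2} together with \eqref{Intpolation3}.

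The core of the argument is \eqref{Intpolation1}. I would introduce the auxiliary $\boldsymbol{H}(\operatorname{curl})$-conforming interpolant $\boldsymbol{I}_h^e\boldsymbol{v}$ and split $\|\boldsymbol{v}-\boldsymbol{I}_h\boldsymbol{v}\|_K\le\|\boldsymbol{v}-\boldsymbol{I}_h^e\boldsymbol{v}\|_K+\|\boldsymbol{I}_h^e\boldsymbol{v}-\boldsymbol{I}_h\boldsymbol{v}\|_K$. The first summand is controlled by \eqref{Ihe} with $l=s+1$, so that $\widetilde{l}=\min\{s+1,[s]\}=[s]$, producing $h_K^s|\boldsymbol{v}|_{s,K}+h_K^{[s]+1}|\nabla\times\boldsymbol{v}|_{[s],K}$ (the residual term $h_K\|\nabla\times\boldsymbol{v}\|_K$ being absorbed into $h_K^{[s]+1}|\nabla\times\boldsymbol{v}|_{[s],K}$, as noted after \eqref{Ihe}). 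For the second summand I set $\boldsymbol{w}:=\boldsymbol{I}_h^e\boldsymbol{v}-\boldsymbol{I}_h\boldsymbol{v}$; since we work in the regime $r=k$, $\boldsymbol{w}$ lies in $\boldsymbol{V}^e_{k-1,k+1}(K)+\boldsymbol{V}_{k-1,k+1}(K)$, so Lemma \ref{lowbound} applies. The decisive point is that $\boldsymbol{I}_h$ and $\boldsymbol{I}_h^e$ share the edge moments ($\mathbf{D}^3_{\boldsymbol{V}}$ vs.\ $\mathbf{D}^1_{\boldsymbol{V}^e}$), the lowest-order volume moments ($\mathbf{D}^8_{\boldsymbol{V}}$ vs.\ $\mathbf{D}^4_{\boldsymbol{V}^e}$), and the $\boldsymbol{x}_f$-weighted face moments ($\mathbf{D}^6_{\boldsymbol{V}}$ vs.\ $\mathbf{D}^3_{\boldsymbol{V}^e}$, supplemented by the serendipity face constraint shared by $\mathbb{E}_{0,k+1}(f)$ and $\mathbb{E}^e_{0,2}(f)$ in the case $k=1$, where these face moments degenerate). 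Hence $\boldsymbol{w}\cdot\boldsymbol{t}_e|_e=0$ on every edge (both edge components lying in $P_{k-1}(e)$, which also kills the $\boldsymbol{w}_\tau\cdot\boldsymbol{t}_{\partial f}$ term), $\int_K\boldsymbol{w}\cdot\boldsymbol{x}_Kp_{k-2}\,\d K=0$ for all $p_{k-2}\in P_{k-2}(K)$, and $\int_f\boldsymbol{w}_\tau\cdot\boldsymbol{x}_fp_{k-1}\,\d f=0$ for all $p_{k-1}\in P_{k-1}(f)$, so three of the five terms in Lemma \ref{lowbound} vanish and we are left with $\|\boldsymbol{w}\|_K\lesssim h_K\|\nabla\times\boldsymbol{w}\|_K+\sum_{f\in\partial K}h_f^{3/2}\|\nabla_f\times\boldsymbol{w}_\tau\|_f$.

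It remains to bound these two surviving quantities. For $h_K\|\nabla\times\boldsymbol{w}\|_K$ I would write $\nabla\times\boldsymbol{w}=-(\nabla\times\boldsymbol{v}-\nabla\times\boldsymbol{I}_h^e\boldsymbol{v})+(\nabla\times\boldsymbol{v}-\boldsymbol{J}_h\nabla\times\boldsymbol{v})$ via \eqref{commut} and control the two terms using \eqref{CurlIhe} (with $l=s+1$) and \eqref{interpolation_J}, giving $h_K\|\nabla\times\boldsymbol{w}\|_K\lesssim h_K^{s+2}|\nabla\times\boldsymbol{v}|_{s+1,K}$. For the face term I would use the identity $\nabla_f\times\boldsymbol{u}_\tau=(\nabla\times\boldsymbol{u})\cdot\boldsymbol{n}_f$ (cf.\ \eqref{EnhaceDeRhamTraceCom}) together with \eqref{commut} to write $\nabla_f\times\boldsymbol{w}_\tau=-\nabla_f\times(\boldsymbol{v}-\boldsymbol{I}_h^e\boldsymbol{v})_\tau+\left((\nabla\times\boldsymbol{v}-\boldsymbol{J}_h\nabla\times\boldsymbol{v})\cdot\boldsymbol{n}_f\right)$, bound the first part by \eqref{RotIne} (with $l=s+1$) followed by the higher-regularity trace inequality \eqref{Hs1trace} applied to $(\nabla\times\boldsymbol{v})\cdot\boldsymbol{n}_f\in H^{s+1}(K)$, and bound the second part by the classical trace inequality \eqref{H1trace} followed by \eqref{interpolation_J}; with $h_f\simeq h_K$ this gives $\sum_{f\in\partial K}h_f^{3/2}\|\nabla_f\times\boldsymbol{w}_\tau\|_f\lesssim h_K^{[s]+2}|\nabla\times\boldsymbol{v}|_{[s+1],K}+h_K^{s+2}|\nabla\times\boldsymbol{v}|_{s+1,K}$. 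Adding the bounds for $\|\boldsymbol{v}-\boldsymbol{I}_h^e\boldsymbol{v}\|_K$ and $\|\boldsymbol{w}\|_K$ produces exactly the four terms in \eqref{Intpolation1}.

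I expect the main obstacle to be this last comparison step. One has to verify with some care which degrees of freedom of $\boldsymbol{I}_h$ and $\boldsymbol{I}_h^e$ genuinely coincide when $r=k$, to patch the lowest-order case $k=1$ where the relevant face moments degenerate and one must instead invoke the serendipity constraint built into the spaces, and to orchestrate \eqref{RotIne}, \eqref{Hs1trace} and \eqref{H1trace} so that each power of $h_K$ lands precisely on one of the four terms of \eqref{Intpolation1}. The remainder is routine bookkeeping with estimates already available in the excerpt.
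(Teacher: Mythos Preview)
Your proposal is correct and follows essentially the same route as the paper's own proof: commutativity plus \eqref{interpolation_J} for the curl estimates, the triangle-inequality splitting $\|\boldsymbol{v}-\boldsymbol{I}_h^e\boldsymbol{v}\|_K+\|\boldsymbol{I}_h^e\boldsymbol{v}-\boldsymbol{I}_h\boldsymbol{v}\|_K$ for \eqref{Intpolation1}, the DOF-matching argument to kill three of the five terms in Lemma~\ref{lowbound}, and then \eqref{CurlIhe}, \eqref{RotIne}, \eqref{H1trace}, \eqref{Hs1trace} to control the two surviving ones. Your treatment of the $k=1$ serendipity case is in fact slightly more explicit than the paper's.
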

	\begin{proof}
		Using the commutative property \eqref{commut} between $\boldsymbol{I}_h $ and $\boldsymbol{J}_h$, along with the interpolation error estimate \eqref{interpolation_J}, the results \eqref{Intpolation2} and \eqref{Intpolation3} follow directly.
		Applying the interpolation error estimate \eqref{Ihe}, we derive \eqref{Intpolation1} as follows:
		\begin{equation}\label{v-Ihv}
			\begin{aligned}
				\|\boldsymbol{v}-\boldsymbol{I}_h\boldsymbol{v}\|_K&\lesssim \|\boldsymbol{v}-\boldsymbol{I}_h^e\boldsymbol{v}\|_K+\|\boldsymbol{I}_h^e\boldsymbol{v}-\boldsymbol{I}_h\boldsymbol{v}\|_K\\
				&\lesssim h_K^s|\v|_{s,K}+h_K^{[s]+1}|\nabla\times \v|_{[s],K}+\|\boldsymbol{I}_h^e\boldsymbol{v}-\boldsymbol{I}_h\boldsymbol{v}\|_K.
			\end{aligned}
		\end{equation}
		We now turn to estimating the error $\boldsymbol{I}_h^e \boldsymbol{v}-\boldsymbol{I}_h\boldsymbol{v}$.
		From the definition of $\boldsymbol{I}_h^e $ and $\boldsymbol{I}_h$,  the following hold
		\begin{align*}
			(\boldsymbol{I}^e_h\v - \boldsymbol{I}_h\v)\cdot \boldsymbol{t}_{e} &= 0 \quad \text{on each edge } e \text{ of } K, \\
			\int_f (\boldsymbol{I}_h^e\boldsymbol{v}-\boldsymbol{I}_h\boldsymbol{v})_{\tau}\cdot (\boldsymbol{x}_f p_{k-1})\d f&=0, \quad \forall p_{k-1}\in P_{k-1}(f),\\
			\int_K (\boldsymbol{I}^e_h\v - \boldsymbol{I}_h\v)\cdot (\boldsymbol{x}_K p_{k-2}) \d K&= 0, \quad \forall p_{k-2} \in P_{k-2}(K).
		\end{align*} 
		Combining these  properties with Lemma \ref{lowbound}, we obtain
		\begin{align}\label{Ih-Ihe}
			\|\boldsymbol{I}_h^e\boldsymbol{v}-\boldsymbol{I}_h\boldsymbol{v}\|_{K} \lesssim h_K\|\nabla \times (\boldsymbol{I}_h^e\boldsymbol{v}-\boldsymbol{I}_h\boldsymbol{v})\|_{K} + \sum_{f\in \partial K} h_f^{\frac{3}{2}}\|\nabla_{f}\times  (\boldsymbol{I}_h^e\boldsymbol{v}-\boldsymbol{I}_h\boldsymbol{v})_{\tau}\|_{f}.
		\end{align}
		Using \eqref{CurlIhe} and \eqref{Intpolation2} to estimate the first term on the right-hand side of \eqref{Ih-Ihe} yields
		\begin{equation}\label{curlIh-Ihe}
			\begin{aligned}
				h_K\|\nabla \times (\boldsymbol{I}_h^e\boldsymbol{v}-\boldsymbol{I}_h\boldsymbol{v})\|_{K}&\lesssim h_K\|\nabla\times(\v-\boldsymbol{I}_h^e\v)\|_K+h_K\|\nabla\times(\boldsymbol{v}-\boldsymbol{I}_h\v)\|_K\\
				&\lesssim h_K^{s+2}|\nabla\times \v|_{s+1,K}.
			\end{aligned}
		\end{equation}
		For the last term in \eqref{Ih-Ihe}, using the interpolation error estimates \eqref{RotIne}, \eqref{Intpolation2}, \eqref{Intpolation3} and the trace inequality \eqref{Hs1trace}, we obtain
		\begin{equation}\label{RotIn-Ihe}
			\begin{aligned}
				\sum_{f\in \partial K} h_f^{\frac{3}{2}}\|\nabla_{f}\times  (\boldsymbol{I}_h^e\boldsymbol{v}-\boldsymbol{I}_h\boldsymbol{v})_{\tau}\|_{f}&\lesssim \sum_{f\in \partial K} h_f^{\frac{3}{2}}(\|\nabla_{f}\times  (\v-\boldsymbol{I}_h\v)_{\tau}\|_f+\|\nabla_{f}\times  (\v-\boldsymbol{I}_h^e\v)_{\tau}\|_f)\\
				&\lesssim
				h_K^{s-\frac{1}{2}}\|\nabla\times(\v-\boldsymbol{I}_h\v)\|_K+ h_K^{s+\frac{1}{2}}|\nabla\times(\v-\boldsymbol{I}_h\v)|_{1,K}\\
				&\quad +
				\sum_{f\in \partial K}
				h_f^{s+2}|\nabla_{f}\times  \v|_{s+\frac{1}{2},f} \\
				&\lesssim h_K^{[s]+2}|\nabla\times \v|_{[s+1],K} + h_K^{s+2}|\nabla\times \v|_{s+1,K}.
			\end{aligned}
		\end{equation}
		Thus, combining \eqref{v-Ihv} through \eqref{RotIn-Ihe}, the proof is complete.
	\end{proof} 
	\section{Discretization}
	\subsection{The discrete bilinear forms}
	For any $\boldsymbol{v}_h,\boldsymbol{w}_h \in \boldsymbol{V}_{k-1,k+1}(\Omega)$,
	we define the following bilinear form to discretize $(\nabla \nabla\times \boldsymbol{v}_h,\nabla\nabla\times \boldsymbol{w}_h)_K$:
	\begin{align*}
		a_h^K(\nabla\times\boldsymbol{v}_h, \nabla\times\boldsymbol{w}_h)&:=(\nabla\boldsymbol{\Pi}_k^{\nabla,K}\nabla \times\boldsymbol{v}_h, \nabla\boldsymbol{\Pi}_k^{\nabla,K}\nabla \times\boldsymbol{w}_h)_K\\
		&\quad+S_1^K((I-\boldsymbol{\Pi}_k^{\nabla,K})\nabla \times\boldsymbol{v}_h, (I-\boldsymbol{\Pi}_k^{\nabla,K})\nabla \times\boldsymbol{w}_h).
	\end{align*}
	The stabilization $S_1^K: \boldsymbol{W}_k(K)\times \boldsymbol{W}_k(K) \to \mathbb{R}$ of \cite{JM2023} is extended to $k\ge1$ by 
	\begin{align*}
		S_1^K(\boldsymbol{\xi}_h,\boldsymbol{\eta}_h)&:= h_K^{-2}(\boldsymbol{\Pi}^{\nabla,K}_{k}\boldsymbol{\xi}_h,\boldsymbol{\Pi}^{\nabla,K}_{k}\boldsymbol{\eta}_h)+ (\nabla\cdot \boldsymbol{\xi}_h,\nabla\cdot \boldsymbol{\eta}_h)_K\\
		&\quad+\sum_{f\in \partial_K} (h_f^{-1}(\boldsymbol{\Pi}^{0,f}_k\boldsymbol{\xi}_h,\boldsymbol{\Pi}^{0,f}_k\boldsymbol{\eta}_h)_f+(\boldsymbol{\xi}_h,\boldsymbol{\eta}_h)_{\partial f}),
	\end{align*}
	which satisfies the following bounds for
	\begin{align}
		\label{LowerBoundH1}
		\alpha_* |\boldsymbol{\xi}_h|_{1,K}^2 &\le S_1^K(\boldsymbol{\xi}_h,\boldsymbol{\xi}_h),\quad \forall \boldsymbol{\xi}_h \in \boldsymbol{W}_k(K),\\
		S_1^K(\boldsymbol{\xi}_h,\boldsymbol{\xi}_h)&\le \alpha^*|\boldsymbol{\xi}_h|_{1,K}^2, \quad \forall \boldsymbol{\xi}_h \in \operatorname{ker}(\boldsymbol{\Pi}^{\nabla,K}_k) \cap \boldsymbol{W}_k(K).
	\end{align}
	Here, the positive constants $\alpha_*$ and $\alpha^* $ are independent of $h_K$.
	By the standard argument \cite{VEM2013}, the local bilinear form $a_h^K(\cdot, \cdot)$ satisfies the following properties:\\
	$\bullet$ consistency: for all $\v_h\in \boldsymbol{V}_{k-1,k+1}(K)$ and  $\boldsymbol{q}_{k+1}\in \boldsymbol{P}_{k+1}(K)$, 
	\begin{equation}\label{ahConsist}
		a_h^K(\nabla\times\v_h,\nabla\times\boldsymbol{q}_{k+1}) = a^K(\nabla\times\v_h,\nabla\times\boldsymbol{q}_{k+1}),
	\end{equation}
	$\bullet$ stability: for all $\v_h\in \boldsymbol{V}_{k-1,k+1}(K)$, 
	\begin{equation}\label{ahStab}
		(\nabla\nabla\times\v_h,\nabla\nabla\times\v_h)_K \lesssim a_h^K(\nabla\times\boldsymbol{v}_h,\nabla\times\v_h)\lesssim  (\nabla\nabla\times\v_h,\nabla\nabla\times\v_h)_K.
	\end{equation}
	\par
	For the bilinear form  $(\v_h,\nabla q_h )_K$ with $\boldsymbol{v}_h\in\boldsymbol{V}_{k-1,k+1}(K)$ and  $q_h\in U_k(K)$,  we define the discrete bilinear form 
	\begin{equation*}
		b_h^K(\cdot, \cdot ) :\boldsymbol{V}_{k-1,k+1}(K) \times \boldsymbol{V}_{k-1,k+1}(K) \to \mathbb{R},
	\end{equation*}
	by
	\begin{equation}\label{bhk}
		b_h^K(\boldsymbol{v}_h, \boldsymbol{w}_h):=(\boldsymbol{\Pi}_{k-1}^{0,K}\boldsymbol{v}_h, \boldsymbol{\Pi}_{k-1}^{0,K}\boldsymbol{w}_h)_K+S_2^K((I-\boldsymbol{\Pi}^{0,K}_{k-1})\boldsymbol{v}_h, (I-\boldsymbol{\Pi}_{k-1}^{0,K})\boldsymbol{w}_h).
	\end{equation}
	The stabilization $S_2^K$, defined for all $\boldsymbol{\xi}_h,\boldsymbol{\eta}_h \in \boldsymbol{V}_{k-1,k+1}(K)$, is given by
	\begin{equation}\label{SK} 
		\begin{aligned}
			S_2^K(\boldsymbol{\xi}_h, \boldsymbol{\eta}_h):=h_K^2(\boldsymbol{\Pi}_{k}^{0, K}\nabla \times\boldsymbol{\xi}_h, \boldsymbol{\Pi}_{k}^{0, K}\nabla \times\boldsymbol{\eta}_h)_{ K}+\sum_{f\in \partial K}\left[h_f(\boldsymbol{\Pi}^{0,f}_{+,k-1}\boldsymbol{\xi}_{h,\tau},\boldsymbol{\Pi}^{0,f}_{+,k-1}\boldsymbol{\eta}_{h,\tau})_f\right.\\
			+h_f^{3}(\boldsymbol{\Pi}^{0, f}_k \nabla \times\boldsymbol{\xi}_h, \boldsymbol{\Pi}^{0, f}_k \nabla \times\boldsymbol{\eta}_h)_{f} 
			\left.+h_f^{4}(\nabla \times\boldsymbol{\xi}_h, \nabla \times\boldsymbol{\eta}_h)_{ \partial f} 
			+h_f(\boldsymbol{\xi}_h\cdot \boldsymbol{t}_{\partial f}, \boldsymbol{\eta}_h\cdot \boldsymbol{t}_{\partial f})_{ \partial f}\right],
		\end{aligned}
	\end{equation}
	where the orthogonal projector $\boldsymbol{\Pi}^{0,f}_{+,k-1}: \boldsymbol{L}^2(f)\to \boldsymbol{x}_fP_{k-1}(f)$, which can be computed from the degrees of freedom defined in \eqref{dofV6} for $k\ge2$, satisfies
	\begin{equation*}
		(\boldsymbol{\Pi}^{0,f}_{+,k-1}\v -\v , \boldsymbol{x}_f p_{k-1})_f=0, \quad \forall \v \in  \mathbb{E}_{k-1,k+1}(f),\quad \forall p_{k-1} \in P_{k-1}(f).
	\end{equation*}
	Note that for $k=1$ and $\v\in \mathbb{E}_{0,2}(f)$, definition \eqref{reducedE} gives $\boldsymbol{\Pi}^{0,f}_{+,k-1}\v=0$.
	For all $ \boldsymbol{v}_h \in \boldsymbol{V}_{k-1, k+1}(K)\cap \ker(\boldsymbol{\Pi}^{0, K}_{k-1})$, this stabilization satisfies the following stability bounds, with the proof deferred to the next subsection (see Theorem \ref{LemmaS2Stab}):
	\begin{equation}\label{Stab2Esti}
		\begin{aligned}
			\beta_*\|\boldsymbol{v}_h\|^2_{K}\le S_2^K(\boldsymbol{v}_h, \boldsymbol{v}_h)\le \beta^* \|\v_h\|_{h,\boldsymbol{V}(K)},
		\end{aligned}
	\end{equation}
	where the scaled norm $\|\v_h\|_{h,\boldsymbol{V}(K)}$ is defined by 
	\begin{equation*}
		\|\v_h\|^2_{h,\boldsymbol{V}(K)}:= \|\vh\|^2_K+h_K^2\|\nabla \times \v_h\|^2_K+h_K^4|\nabla \times\v_h|^2_{1,K}
	\end{equation*}
	and the positive constants $\beta_*$ and $\beta^* $ are independent of $h_K$.
	From the lower bound estimate of \eqref{Stab2Esti}, for any $\boldsymbol{v}_h,\boldsymbol{w}_h\in\boldsymbol{V}_{k-1,k+1}(K)$, we easily obtain the coercivity 
	\begin{equation}\label{bhCoer}
		(\boldsymbol{v}_h,\boldsymbol{v}_h)_K\lesssim b_h^K(\boldsymbol{v}_h,\boldsymbol{v}_h).
	\end{equation}
	Using the upper bound estimate of \eqref{Stab2Esti} and the polynomial inverse estimate, we have the continuity   
	\begin{equation}
		\begin{aligned}\label{bhConti}
			b_h^K(\boldsymbol{v}_h,\boldsymbol{w}_h) &\lesssim (\|\vh\|^2_K+h_K^2\|\nabla\times (I-\boldsymbol{\Pi}_{k-1}^{0,K})\vh\|^2_K+h_K^4|\nabla\times (I-\boldsymbol{\Pi}_{k-1}^{0,K})\v_h|^2_{1,K})^{\frac{1}{2}} \\
			&\quad\quad(\|\wh\|^2_K+h_K^2\|\nabla\times (I-\boldsymbol{\Pi}_{k-1}^{0,K})\w_h\|^2_K+h_K^4|\nabla\times (I-\boldsymbol{\Pi}_{k-1}^{0,K})\w_h|^2_{1,K})^{\frac{1}{2}}\\
			&\lesssim  \|\vh\|_{h,\boldsymbol{V}(K)} \|\wh\|_{h,\boldsymbol{V}(K)}.
		\end{aligned}
	\end{equation}
	The consistency for $b_h^K(\cdot,\cdot)$ is satisfied by
	\begin{equation}\label{bhConsist}
		b_h^K(\boldsymbol{w}_h,\boldsymbol{q}_{k-1})=(\boldsymbol{w}_h,\boldsymbol{q}_{k-1})_K,\quad \forall \boldsymbol{w}_h\in \boldsymbol{V}_{k-1,k+1}(K),\boldsymbol{q}_{k-1}\in\boldsymbol{P}_{k-1}(K).
	\end{equation}
	\par
	As usual,  the global bilinear forms $a_h(\cdot, \cdot) $ are   $b_h(\cdot, \cdot) $  are defined by 
	\begin{equation*}
		\begin{aligned}
			a_h(\nabla\times\boldsymbol{v}, \nabla\times\boldsymbol{w})&=\sum_{K\in \mathcal{T}_h} a_h^K(\nabla\times\boldsymbol{v}, \nabla\times\boldsymbol{w}),  \quad \forall \boldsymbol{v} , \w \in \boldsymbol{V}_{k-1, k+1}(\Omega),  \\
			b_h(\boldsymbol{v}, \nabla q) &=\sum_{K\in \mathcal{T}_h}b_h^K(\boldsymbol{v}, \nabla q),  \quad  \forall \boldsymbol{v}\in \boldsymbol{V}_{k-1, k+1}(\Omega), q \in U_{k}(\Omega).
		\end{aligned}
	\end{equation*}
	\subsection{The stability results}
	In this subsection, we concentrate on proving the stability of $S_2^K$ defined by \eqref{SK}.  
	We give the following scaled trace inequalities \cite{Buffa2002}:
	\begin{align}
		\label{HrotTrace}
		\|\boldsymbol{v}\cdot \boldsymbol{t}_{\partial f}\|_{-\frac{1}{2},\partial f}&\lesssim \|\v\|_{f} +h_f\|\nabla_{f}\times  \v\|_{f},\quad \forall \v\in\boldsymbol{H}({\operatorname{curl}};f),\\
		\label{HcurlTrace}
		\|\boldsymbol{v}\times\boldsymbol{n}_{\partial K}\|_{-\frac{1}{2},\partial K}&\lesssim \|\boldsymbol{v}\|_K+ h_K\|\nabla\times \v\|_K,\quad\forall \v \in \boldsymbol{H}(\operatorname{curl};K).
	\end{align}
	Note that for any given $\v\in\mathbb{E}^e_{r-1,k+1}(f)$, $\v\cdot\boldsymbol{t}_{\partial f}$ is a piecewise polynomial on the boundary $\partial f$. Using the polynomial inverse inequality and \eqref{HrotTrace}, we obtain the following trace inequality:
	\begin{equation}\label{TraceTangetial}
		\|\v \cdot \boldsymbol{t}_{\partial f}\|_{\partial f} \lesssim h_f^{-\frac{1}{2}}\|\v \cdot \boldsymbol{t}_{\partial f}\|_{-\frac{1}{2},\partial f}\lesssim h_f^{-\frac{1}{2}}\|\v\|_{f} +h_f^{\frac{1}{2}}\|\nabla_{f}\times  \v\|_{f},\quad \forall \v\in\mathbb{E}^e_{r-1,k+1}(f).
	\end{equation}
	Although $\mathbb{E}^e_{k-1,k+1}(f)$ is not a polynomial space, an inverse inequality nevertheless holds on it. This inequality is crucial for establishing the stability result on $\boldsymbol{V}^e_{k-1,k+1}(K)$; see \cite[Lemma 5.4]{VEMforGener}:
	\begin{lemma}\label{EeTrace}
		The following inverse inequality holds true:
		\begin{equation*}
			\|\boldsymbol{v}\|_f\lesssim h_f^{-\frac{1}{2}}\|\boldsymbol{v}\|_{-\frac{1}{2},f},\quad \forall \boldsymbol{v} \in \mathbb{E}^e_{k-1,k+1}(f),
		\end{equation*}
		which, together with \eqref{HcurlTrace}, yields
		\begin{equation*}
			\sum_{f\in\partial K}h_f^\frac{1}{2}\|\v_{\tau}\|_f\lesssim  \|\boldsymbol{v}\|_K +h_K\|\nabla\times \v\|_K, \quad \forall \v\in\boldsymbol{V}^e_{k-1,k+1}(K).
		\end{equation*}
	\end{lemma}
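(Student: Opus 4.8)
The plan is to treat the two displayed estimates separately: the first is an inverse inequality internal to the finite-dimensional face space $\mathbb{E}^e_{k-1,k+1}(f)$, while the second follows from it together with the scaled $\boldsymbol{H}(\operatorname{curl})$ trace inequality \eqref{HcurlTrace}.

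For the first inequality I would argue by duality. Writing $\|\boldsymbol{v}\|_f^2=(\boldsymbol{v},\boldsymbol{v})_f\le\|\boldsymbol{v}\|_{-\frac12,f}\,\|\boldsymbol{v}\|_{\frac12,f}$, it suffices to establish the forward inverse estimate $\|\boldsymbol{v}\|_{\frac12,f}\lesssim h_f^{-\frac12}\|\boldsymbol{v}\|_f$ on $\mathbb{E}^e_{k-1,k+1}(f)$, since inserting it and cancelling one factor of $\|\boldsymbol{v}\|_f$ gives exactly $\|\boldsymbol{v}\|_f\lesssim h_f^{-\frac12}\|\boldsymbol{v}\|_{-\frac12,f}$. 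The forward estimate is where the structure of the space enters: every $\boldsymbol{v}\in\mathbb{E}^e_{k-1,k+1}(f)$ has polynomial rotation $\nabla_f\times\boldsymbol{v}\in P_k(f)$, polynomial divergence $\nabla_f\cdot\boldsymbol{v}\in P_{k-1}(f)$, and polynomial tangential edge traces $\boldsymbol{v}\cdot\boldsymbol{t}_{\partial f}\in P_{k-1}(e)$. I would invoke the regular (Helmholtz) decomposition of \cite[Lemma 3.1 and Lemma 4.4]{VEMforGener} on the star-shaped polygon $f$, $\boldsymbol{v}=\overrightarrow{\nabla}_f\times\psi+\nabla_f\chi$, where $\psi$ and $\chi$ solve planar Poisson problems whose data are precisely the polynomials above; the corresponding solution operators are uniformly stable under the mesh regularity assumption (A2), so a scaling argument to a reference configuration yields the factor $h_f^{-\frac12}$ with a constant independent of $h$ and of the face shape. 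Since $\mathbb{E}^e_{k-1,k+1}(f)$ differs from the face space of \cite{VEMforGener} only in the polynomial degrees of these constraints, the argument of \cite[Lemma 5.4]{VEMforGener} applies essentially verbatim.

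For the second inequality I would specialize the first estimate to the tangential component $\boldsymbol{v}_\tau$, which lies in $\mathbb{E}^e_{k-1,k+1}(f)$ for every face $f$ by the definition \eqref{Vke} of $\boldsymbol{V}^e_{k-1,k+1}(K)$, obtaining $h_f^{\frac12}\|\boldsymbol{v}_\tau\|_f\lesssim\|\boldsymbol{v}_\tau\|_{-\frac12,f}$. Using the mesh regularity ($h_f\simeq h_K$ and a uniformly bounded number of faces) together with the rotational identification $|\boldsymbol{v}_\tau|=|\boldsymbol{v}\times\boldsymbol{n}_f|$, I sum over the faces and localize the negative-order dual norm to the boundary; modulo the standard equivalence of the face-localized negative norm with the one dual to zero-extendable $\boldsymbol{H}^{\frac12}$ traces, this gives $\sum_{f\in\partial K}\|\boldsymbol{v}_\tau\|_{-\frac12,f}\lesssim\|\boldsymbol{v}\times\boldsymbol{n}_{\partial K}\|_{-\frac12,\partial K}$. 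Applying \eqref{HcurlTrace} bounds the right-hand side by $\|\boldsymbol{v}\|_K+h_K\|\nabla\times\boldsymbol{v}\|_K$, which completes the proof.

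The main obstacle is the shape-robustness of the forward inverse estimate on the \emph{non-polynomial} space $\mathbb{E}^e_{k-1,k+1}(f)$: since the virtual functions are not polynomials, the inverse inequality cannot be obtained by a direct polynomial scaling argument, and one must route the estimate through the regular decomposition and the uniform stability of the planar elliptic solvers under (A2). A secondary delicate point is the correct localization of the $H^{-\frac12}$ dual norm in passing from the individual faces to $\partial K$, namely matching the face-localized negative norm with the one dual to zero-extendable $\boldsymbol{H}^{\frac12}(\partial K)$ traces, which is exactly what legitimizes the summation step.
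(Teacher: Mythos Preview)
The paper does not give its own proof of this lemma; it simply records the result and cites \cite[Lemma~5.4]{VEMforGener} for the inverse inequality, then states the trace consequence as immediate from \eqref{HcurlTrace}. Your proposal is correct and reproduces precisely the argument behind that citation: the duality reduction to a forward $H^{1/2}$ inverse estimate on $\mathbb{E}^e_{k-1,k+1}(f)$, handled via the regular decomposition and uniform elliptic stability on star-shaped polygons, followed by face-wise application and \eqref{HcurlTrace} for the second estimate. The one point you flag---matching the face-localized $H^{-1/2}$ norm with the dual of zero-extendable $H^{1/2}(\partial K)$ traces so that the sum over faces is controlled by $\|\boldsymbol v\times\boldsymbol n_{\partial K}\|_{-1/2,\partial K}$---is exactly the subtlety that makes the combination work, and under the mesh assumptions (A1)--(A3) with a uniformly bounded number of faces it goes through as you indicate.
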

	Although we cannot extend the inverse inequality to $\mathbb{E}_{r-1,k+1}(f)$, Lemma \ref{EeTrace} enables us to generalize the trace inequality to the local space $\boldsymbol{V}_{k-1,k+1}(K)$ as follows.
	\begin{lemma}\label{VkTraceIneq}
		The following trace inequality holds:
		\begin{equation*}
			\sum_{f\in \partial K}h_f^{\frac{1}{2}}\|\v_{\tau}\|_f \lesssim \|\v\|_{h,\boldsymbol{V}(K)}, \quad \forall \boldsymbol{v}\in \boldsymbol{V}_{k-1,k+1}(K).
		\end{equation*}
		\begin{proof}
			For any given $\v\in \boldsymbol{V}_{r-1,k+1}(K)$, there exists a decomposition
			\begin{equation*}
				\v= \v^0+\v^e,
			\end{equation*}
			satisfying the following properties on each face $f\in \partial K$:
			\begin{gather*}
				\nabla_{f}\times  \v^0_{\tau} \in \overline{\mathbb{B}}_k(f)\cap L_0^2(f),\quad \nabla\cdot \boldsymbol{v}_{\tau}^0=0,\quad \boldsymbol{v}_{\tau}^0\cdot\boldsymbol{t}_{e}=0, \quad \forall e\in \partial f,\\
				\nabla_{f}\times  \v^e_{\tau} \in P_0(f),\quad \nabla\cdot \boldsymbol{v}_{\tau}^e\in P_{k-1}(f),\quad \boldsymbol{v}_{\tau}\cdot\boldsymbol{t}_{e}\in P_{k-1}(e), \quad \forall e\in \partial f.
			\end{gather*}
			In the face space $\mathbb{E}_{k-1,k+1}(f)$, the decomposition is uniquely determined due to the well-posedness of both curl-div systems \cite{VEMfordivcurl}. Note that $\boldsymbol{v}^0$ is divergence-free and tangential trace-free on each face $f$, while $\boldsymbol{v}^e_{\tau} \in \mathbb{E}^e_{k-1,k+1}(f)$. 
			Then, the two-dimensional Friedrichs inequality yields
			\begin{equation*}
				\|\v^0_{\tau}\|_f \lesssim h_f \|\nabla_{f}\times   \v_{\tau}^0\|_f, \quad \forall f\in\partial K.
			\end{equation*}
			Combining this with Lemma \ref{EeTrace} and the trace inequalities \eqref{TraceTangetial} and \eqref{H1trace}, we have
			\begin{align*}
				\sum_{f\in \partial K}h_f^{\frac{1}{2}}\|\v_{\tau}\|_f &\le   \sum_{f\in \partial K}h_f^{\frac{1}{2}}(\|\boldsymbol{v}^0_{\tau}\|_f+
				\|\boldsymbol{v}^e_{\tau}\|_f)  \\
				&\lesssim   \sum_{f\in \partial K}h_f^{\frac{3}{2}}\|\nabla_{f}\times   \boldsymbol{v}^0_{\tau}\|_f +
				\sum_{f\in \partial K}\| \boldsymbol{v}^e_{\tau}\|_{-\frac{1}{2},f}\\
				&\lesssim \| \boldsymbol{v}^e\|_K + h_K\|\nabla \times \boldsymbol{v}^e\|_K + h_K\|\nabla \times \boldsymbol{v}^0\|_K+h_K^{2}|\nabla\times \boldsymbol{v}^0|_{1,K}.
			\end{align*}
			The proof is complete.
		\end{proof}
	\end{lemma}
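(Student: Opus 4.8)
The plan is to establish the trace inequality for $\boldsymbol{V}_{k-1,k+1}(K)$ by reducing it, via the decomposition $\boldsymbol{v} = \boldsymbol{v}^0 + \boldsymbol{v}^e$ already sketched, to the two known building blocks: the $H(\operatorname{curl})$-type inverse/trace inequality on the edge-conforming pieces (Lemma~\ref{EeTrace}) and the two-dimensional Friedrichs inequality on the divergence-free, tangential-trace-free pieces. First I would introduce, on each face $f\in\partial K$, the splitting of the tangential trace $\boldsymbol{v}_{\tau}$ according to its scalar curl: write $\boldsymbol{v}_{\tau} = \boldsymbol{v}^0_{\tau} + \boldsymbol{v}^e_{\tau}$ where $\boldsymbol{v}^0_{\tau}$ carries the higher-order part $\nabla_f\times\boldsymbol{v}_{\tau}$ of the curl (so $\nabla_f\times\boldsymbol{v}^0_{\tau}\in\overline{\mathbb{B}}_k(f)\cap L_0^2(f)$) and is chosen divergence-free with vanishing tangential trace on $\partial f$, while $\boldsymbol{v}^e_{\tau}\in\mathbb{E}^e_{k-1,k+1}(f)$ absorbs the polynomial curl $P_0(f)$, the divergence in $P_{k-1}(f)$, and the edge traces in $P_{k-1}(e)$. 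The solvability and uniqueness of this splitting inside $\mathbb{E}_{k-1,k+1}(f)$ follows from well-posedness of the relevant curl--div systems \cite{VEMfordivcurl}, exactly as in the cited construction. Since this face-level decomposition is consistent across shared edges (both components have well-defined edge behavior), it patches to a genuine decomposition $\boldsymbol{v}=\boldsymbol{v}^0+\boldsymbol{v}^e$ on $\partial K$.

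Next I would estimate each piece on each face. For $\boldsymbol{v}^0_{\tau}$, which is divergence-free on $f$ and has zero tangential trace on $\partial f$, the two-dimensional Friedrichs (Poincaré-type) inequality gives $\|\boldsymbol{v}^0_{\tau}\|_f \lesssim h_f\|\nabla_f\times\boldsymbol{v}^0_{\tau}\|_f$; scaling by $h_f^{1/2}$ and summing over faces contributes $\sum_{f}h_f^{3/2}\|\nabla_f\times\boldsymbol{v}^0_{\tau}\|_f$. For $\boldsymbol{v}^e_{\tau}\in\mathbb{E}^e_{k-1,k+1}(f)$, I would invoke Lemma~\ref{EeTrace} in the form $h_f^{1/2}\|\boldsymbol{v}^e_{\tau}\|_f \lesssim \|\boldsymbol{v}^e_{\tau}\|_{-1/2,f}$, then use the boundary trace inequality \eqref{HcurlTrace} (dual form) to pass from the face norms to element norms, yielding $\sum_f h_f^{1/2}\|\boldsymbol{v}^e_{\tau}\|_f \lesssim \|\boldsymbol{v}^e\|_K + h_K\|\nabla\times\boldsymbol{v}^e\|_K$. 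Combining the two contributions gives
\[
\sum_{f\in\partial K}h_f^{1/2}\|\boldsymbol{v}_{\tau}\|_f \lesssim \sum_{f}h_f^{3/2}\|\nabla_f\times\boldsymbol{v}^0_{\tau}\|_f + \|\boldsymbol{v}^e\|_K + h_K\|\nabla\times\boldsymbol{v}^e\|_K.
\]
It then remains to bound the right-hand side by $\|\boldsymbol{v}\|_{h,\boldsymbol{V}(K)} = \big(\|\boldsymbol{v}\|_K^2 + h_K^2\|\nabla\times\boldsymbol{v}\|_K^2 + h_K^4|\nabla\times\boldsymbol{v}|_{1,K}^2\big)^{1/2}$.

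For this last reduction I would control the auxiliary quantities by the original $\boldsymbol{v}$: the surface curl of $\boldsymbol{v}^0_{\tau}$ equals the $L_0^2(f)$-part of $(\nabla\times\boldsymbol{v})\cdot\boldsymbol{n}_f$, so $\sum_f h_f^{3/2}\|\nabla_f\times\boldsymbol{v}^0_{\tau}\|_f \lesssim \sum_f h_f^{3/2}\|(\nabla\times\boldsymbol{v})\cdot\boldsymbol{n}_f\|_f$, and a scaled trace inequality \eqref{H1trace} applied to $\nabla\times\boldsymbol{v}\in\boldsymbol{H}^1(K)$ turns this into $h_K\|\nabla\times\boldsymbol{v}\|_K + h_K^2|\nabla\times\boldsymbol{v}|_{1,K}$, which is absorbed into $\|\boldsymbol{v}\|_{h,\boldsymbol{V}(K)}$. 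For the $\boldsymbol{v}^e$ terms I would argue that $\nabla\times\boldsymbol{v}^e$ and $\nabla\times\boldsymbol{v}^0$ are controlled by $\nabla\times\boldsymbol{v}$ together with lower-order face/edge data that is itself dominated by $\|\boldsymbol{v}\|_K$ and $\|\nabla\times\boldsymbol{v}\|_K$ (using Lemma~\ref{lowbound} or the stability of the face decomposition); similarly $\|\boldsymbol{v}^e\|_K \lesssim \|\boldsymbol{v}\|_K + h_K\|\nabla\times\boldsymbol{v}\|_K$. The main obstacle is precisely this stability of the face-wise splitting $\boldsymbol{v}_{\tau}=\boldsymbol{v}^0_{\tau}+\boldsymbol{v}^e_{\tau}$ with $h$-explicit constants: one must verify that decomposing in $\mathbb{E}_{k-1,k+1}(f)$ does not blow up the individual norms, i.e. that $\|\boldsymbol{v}^0_{\tau}\|_{\boldsymbol{H}(\operatorname{curl};f)} + \|\boldsymbol{v}^e_{\tau}\|_{\boldsymbol{H}(\operatorname{curl};f)} \lesssim \|\boldsymbol{v}_{\tau}\|_{\boldsymbol{H}(\operatorname{curl};f)}$ uniformly in $h_f$, which rests on the $h$-uniform well-posedness of the underlying curl--div problems under the mesh regularity assumptions (A1)--(A3); once this is in hand, all remaining steps are routine scaled trace and inverse estimates.
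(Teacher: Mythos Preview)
Your proposal is correct and follows essentially the same approach as the paper: the same face-wise splitting $\boldsymbol{v}_\tau = \boldsymbol{v}^0_\tau + \boldsymbol{v}^e_\tau$, the two-dimensional Friedrichs inequality on $\boldsymbol{v}^0_\tau$, and Lemma~\ref{EeTrace} on $\boldsymbol{v}^e_\tau\in\mathbb{E}^e_{k-1,k+1}(f)$, followed by the $H^1$ trace inequality \eqref{H1trace} on $\nabla\times\boldsymbol{v}^0$. The paper's proof in fact stops at the intermediate bound $\|\boldsymbol{v}^e\|_K + h_K\|\nabla\times\boldsymbol{v}^e\|_K + h_K\|\nabla\times\boldsymbol{v}^0\|_K + h_K^{2}|\nabla\times\boldsymbol{v}^0|_{1,K}$ and declares completion, so your explicit discussion of the stability of the decomposition (needed to reach $\|\boldsymbol{v}\|_{h,\boldsymbol{V}(K)}$) is more detailed than what the paper provides; just make sure the decomposition is realized as $\boldsymbol{v}^e\in\boldsymbol{V}^e_{k-1,k+1}(K)$ on the element, not only on $\partial K$, so that Lemma~\ref{EeTrace} applies.
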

	\begin{theorem}\label{LemmaS2Stab}
		There exist two positive constants $\beta_*$ and $\beta^* $ independent of $h_K$ such that  
		\begin{equation}\label{stab2}
			\begin{aligned}
				\beta_*\|\boldsymbol{v}_h\|^2_{K}\le S_2^K(\boldsymbol{v}_h, \boldsymbol{v}_h)\le &\beta^* \|\v\|_{h,\boldsymbol{V}(K)}^2,\\
				&\forall \boldsymbol{v}_h \in \boldsymbol{V}_{k-1, k+1}(K)\cap \ker(\boldsymbol{\Pi}^{0, K}_{k-1}).
			\end{aligned}
		\end{equation}
	\end{theorem}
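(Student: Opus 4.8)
The plan is to prove the two inequalities in \eqref{stab2} separately, treating the upper bound by a termwise estimate and the lower bound by a Poincaré/Friedrichs-type argument combined with the trace inequality of Lemma \ref{VkTraceIneq}.

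For the upper bound $S_2^K(\boldsymbol{v}_h,\boldsymbol{v}_h)\lesssim \|\boldsymbol{v}_h\|_{h,\boldsymbol{V}(K)}^2$, I would bound each of the four groups of terms in \eqref{SK} individually. The volume term $h_K^2(\boldsymbol{\Pi}_k^{0,K}\nabla\times\boldsymbol{v}_h,\boldsymbol{\Pi}_k^{0,K}\nabla\times\boldsymbol{v}_h)_K$ is $\le h_K^2\|\nabla\times\boldsymbol{v}_h\|_K^2$ since $\boldsymbol{\Pi}_k^{0,K}$ is an $\boldsymbol{L}^2$-orthogonal projection. The face term $h_f(\boldsymbol{\Pi}^{0,f}_{+,k-1}\boldsymbol{v}_{h,\tau},\boldsymbol{\Pi}^{0,f}_{+,k-1}\boldsymbol{v}_{h,\tau})_f \le h_f\|\boldsymbol{v}_{h,\tau}\|_f^2$ and is controlled by $\|\boldsymbol{v}_h\|_{h,\boldsymbol{V}(K)}^2$ via Lemma \ref{VkTraceIneq}. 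The term $h_f^3(\boldsymbol{\Pi}^{0,f}_k\nabla\times\boldsymbol{v}_h,\boldsymbol{\Pi}^{0,f}_k\nabla\times\boldsymbol{v}_h)_f \le h_f^3\|\nabla\times\boldsymbol{v}_h\|_f^2$, and then using the trace inequality \eqref{H1trace} applied to $\nabla\times\boldsymbol{v}_h\in\boldsymbol{H}^1(K)$ gives $h_f^3\|\nabla\times\boldsymbol{v}_h\|_f^2 \lesssim h_K^2\|\nabla\times\boldsymbol{v}_h\|_K^2 + h_K^4|\nabla\times\boldsymbol{v}_h|_{1,K}^2$. For the edge term $h_f^4(\nabla\times\boldsymbol{v}_h,\nabla\times\boldsymbol{v}_h)_{\partial f}$, since $(\nabla\times\boldsymbol{v}_h)|_{\partial K}\in\boldsymbol{\mathbb{B}}_k(\partial K)$ is piecewise polynomial on edges, a polynomial inverse inequality on each edge together with \eqref{H1trace} on the face reduces this to $h_K^{-1}\cdot h_f^4\cdot(h_f^{-1}\|\nabla\times\boldsymbol{v}_h\|_f^2)$-type bounds, again absorbed into $\|\boldsymbol{v}_h\|_{h,\boldsymbol{V}(K)}^2$ after one more application of \eqref{H1trace}; the last edge term $h_f(\boldsymbol{v}_h\cdot\boldsymbol{t}_{\partial f},\boldsymbol{v}_h\cdot\boldsymbol{t}_{\partial f})_{\partial f}$ is handled by the trace inequality \eqref{TraceTangetial}, giving $h_f\|\boldsymbol{v}_h\cdot\boldsymbol{t}_{\partial f}\|_{\partial f}^2 \lesssim \|\boldsymbol{v}_{h,\tau}\|_f^2 + h_f^2\|\nabla_f\times\boldsymbol{v}_{h,\tau}\|_f^2$, whose first term is controlled via Lemma \ref{VkTraceIneq} and whose second term is controlled by $h_f^{3/2}$-weighted rotation norms that reduce to $h_K^2\|\nabla\times\boldsymbol{v}_h\|_K^2+h_K^4|\nabla\times\boldsymbol{v}_h|_{1,K}^2$ via \eqref{H1trace} applied to $\nabla\times\boldsymbol{v}_h$ (noting $\nabla_f\times\boldsymbol{v}_{h,\tau}=(\nabla\times\boldsymbol{v}_h)\cdot\boldsymbol{n}_f$ on $f$).

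For the lower bound $\|\boldsymbol{v}_h\|_K^2\lesssim S_2^K(\boldsymbol{v}_h,\boldsymbol{v}_h)$, I would invoke the auxiliary estimate of Lemma \ref{lowbound}, which applies to $\boldsymbol{V}_{k-1,k+1}(K)$. That lemma gives
\[
\|\boldsymbol{v}_h\|_K \lesssim h_K\|\nabla\times\boldsymbol{v}_h\|_K + \sup_{p_{k-2}}\frac{\int_K \boldsymbol{v}_h\cdot(\boldsymbol{x}_K p_{k-2})}{\|\boldsymbol{x}_K p_{k-2}\|_K} + \sum_{f}\left(h_f^{3/2}\|\nabla_f\times\boldsymbol{v}_{h,\tau}\|_f + h_f\|\boldsymbol{v}_{h,\tau}\cdot\boldsymbol{t}_{\partial f}\|_{\partial f} + \sup_{p_{k-1}}\frac{\int_f \boldsymbol{v}_{h,\tau}\cdot(\boldsymbol{x}_f p_{k-1})}{\|\boldsymbol{x}_f p_{k-1}\|_f}\right).
\]
Since $\boldsymbol{v}_h\in\ker(\boldsymbol{\Pi}^{0,K}_{k-1})$, the second term (the volume polynomial moment against $\boldsymbol{x}_K p_{k-2}$, $p_{k-2}\in P_{k-2}(K)$) vanishes identically, because $\boldsymbol{x}_K p_{k-2}\in\boldsymbol{P}_{k-1}(K)$ and $(\boldsymbol{v}_h,\boldsymbol{q})_K=0$ for all $\boldsymbol{q}\in\boldsymbol{P}_{k-1}(K)$. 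The face moment term $\sup_{p_{k-1}}\frac{\int_f\boldsymbol{v}_{h,\tau}\cdot\boldsymbol{x}_f p_{k-1}}{\|\boldsymbol{x}_f p_{k-1}\|_f}$ is exactly $\|\boldsymbol{\Pi}^{0,f}_{+,k-1}\boldsymbol{v}_{h,\tau}\|_f$ by the definition of $\boldsymbol{\Pi}^{0,f}_{+,k-1}$, so after multiplying by $h_f^{1/2}$ (and dividing out by $h_f^{1/2}$ which is comparable to $h_K^{1/2}$) it matches the first face term in $S_2^K$. The remaining terms $h_f^{3/2}\|\nabla_f\times\boldsymbol{v}_{h,\tau}\|_f = h_f^{3/2}\|\boldsymbol{\Pi}^{0,f}_k\nabla\times\boldsymbol{v}_h\cdot\boldsymbol{n}_f\|_f$ (using $\nabla_f\times\boldsymbol{v}_{h,\tau}=(\nabla\times\boldsymbol{v}_h)|_f\cdot\boldsymbol{n}_f\in P_k(f)$, so the projection acts as identity), $h_f\|\boldsymbol{v}_{h,\tau}\cdot\boldsymbol{t}_{\partial f}\|_{\partial f}$, and $h_K\|\nabla\times\boldsymbol{v}_h\|_K$ all appear (up to $h_K$-scaling) among the summands of $S_2^K(\boldsymbol{v}_h,\boldsymbol{v}_h)$; for $h_K\|\nabla\times\boldsymbol{v}_h\|_K$ I would bound it by the volume term $h_K^2(\boldsymbol{\Pi}_k^{0,K}\nabla\times\boldsymbol{v}_h,\boldsymbol{\Pi}_k^{0,K}\nabla\times\boldsymbol{v}_h)_K$ after noting $\nabla\times\boldsymbol{v}_h\in\boldsymbol{W}_k(K)$ satisfies a Poincaré-type bound $\|\nabla\times\boldsymbol{v}_h\|_K\lesssim \|\boldsymbol{\Pi}_k^{0,K}\nabla\times\boldsymbol{v}_h\|_K$ (since a virtual field whose $\boldsymbol{L}^2$-projection vanishes and whose relevant boundary data vanishes must itself vanish — equivalently, use the stability of $S_1^K$ of \eqref{LowerBoundH1} together with $|\nabla\times\boldsymbol{v}_h|_{1,K}$ control), though more directly one uses that $\|\nabla\times\boldsymbol{v}_h\|_K$ is itself bounded by a combination of the face and projection terms already accounted for. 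Squaring and summing the resulting chain of inequalities yields $\|\boldsymbol{v}_h\|_K^2\lesssim S_2^K(\boldsymbol{v}_h,\boldsymbol{v}_h)$.

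The main obstacle I anticipate is the careful bookkeeping of mesh-size powers: the stabilization $S_2^K$ in \eqref{SK} is built with weights $h_f, h_f^3, h_f^4, h_f$ on the various face and edge contributions, and matching these against the weights $h_K, h_K^2, h_K^4$ appearing in Lemma \ref{lowbound} and in the scaled norm $\|\cdot\|_{h,\boldsymbol{V}(K)}$ requires repeated use of the mesh regularity $h_f\simeq h_K$ (from assumptions (A1)--(A3)) and of the two trace inequalities \eqref{H1trace}, \eqref{TraceTangetial} in the correctly scaled forms. A secondary delicate point is justifying the Poincaré-type control $\|\nabla\times\boldsymbol{v}_h\|_K\lesssim \|\boldsymbol{\Pi}_k^{0,K}\nabla\times\boldsymbol{v}_h\|_K + (\text{boundary terms})$ for $\nabla\times\boldsymbol{v}_h\in\boldsymbol{W}_k(K)$, which is where the lower-bound direction genuinely uses the structure of the virtual element space rather than pure polynomial estimates; this can be extracted from the stability of $S_1^K$ in \eqref{LowerBoundH1} combined with a standard scaled Poincaré inequality on $K$, but it must be invoked rather than proved from scratch here.
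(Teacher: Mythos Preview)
Your overall strategy matches the paper's: Lemma \ref{lowbound} plus $\ker(\boldsymbol{\Pi}^{0,K}_{k-1})$ for the lower bound, termwise trace estimates plus Lemma \ref{VkTraceIneq} for the upper bound. The upper-bound argument is essentially identical to the paper's, and your handling of the element-level control $\|\nabla\times\boldsymbol{v}_h\|_K\lesssim \|\boldsymbol{\Pi}^{0,K}_k\nabla\times\boldsymbol{v}_h\|_K + (\text{boundary terms})$ via \eqref{LowerBoundH1} and a scaled Poincar\'e inequality is exactly what the paper does in \eqref{aux2}.

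There is, however, one genuine gap in your lower-bound argument. You write that $\nabla_f\times\boldsymbol{v}_{h,\tau}=(\nabla\times\boldsymbol{v}_h)|_f\cdot\boldsymbol{n}_f\in P_k(f)$, so that $\boldsymbol{\Pi}^{0,f}_k$ acts as the identity. This is false: by the definition of $\boldsymbol{\mathbb{B}}_k(\partial K)$, the normal component $(\nabla\times\boldsymbol{v}_h)\cdot\boldsymbol{n}_f$ lies in the enhanced face space $\overline{\mathbb{B}}_k(f)\subset\mathbb{B}_{k+2,k}(f)$, which strictly contains $P_k(f)$ (its functions satisfy $\Delta w\in P_{k-1}(f)$ and $w|_e\in P_{k+2}(e)$, not $w\in P_k(f)$). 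Hence $\|\nabla_f\times\boldsymbol{v}_{h,\tau}\|_f$ is \emph{not} equal to $\|\boldsymbol{\Pi}^{0,f}_k(\nabla_f\times\boldsymbol{v}_{h,\tau})\|_f$, and you cannot directly match this term to the $h_f^3(\boldsymbol{\Pi}^{0,f}_k\nabla\times\boldsymbol{v}_h,\boldsymbol{\Pi}^{0,f}_k\nabla\times\boldsymbol{v}_h)_f$ summand of $S_2^K$.

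The paper fills this gap by invoking a face-level stability result for the virtual space $\overline{\mathbb{B}}_k(f)$ (cited as \cite[Lemma 4.3]{ZJQQuadCurl}):
\[
\|\nabla_f\times\boldsymbol{v}_{h,\tau}\|_f \lesssim \|\Pi^{0,f}_k(\nabla_f\times\boldsymbol{v}_{h,\tau})\|_f + h_f^{1/2}\|\nabla_f\times\boldsymbol{v}_{h,\tau}\|_{\partial f}.
\]
This is precisely the face analogue of the ``Poincar\'e-type control for virtual functions'' that you correctly anticipate at the element level; you need it at the face level too. Once this bound is in place, the right-hand side consists only of the projection term and the edge term $\|\nabla\times\boldsymbol{v}_h\|_{\partial f}$, both of which appear (with the correct $h_f$-weights) in $S_2^K$, and the lower bound closes as in \eqref{aux3}.
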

	\begin{proof}
		For any $\boldsymbol{v}_h \in \boldsymbol{V}_{k-1, k+1}(K)\cap \ker(\boldsymbol{\Pi}^{0, K}_{k-1})$, applying Lemma \ref{lowbound} obtains
		\begin{equation}\label{aux1}
			\|\boldsymbol{v}_h\|_{K} \lesssim h_K\|\nabla \times \boldsymbol{v}_h\|_{K}+
			\sum_{f\in \partial K}\left( h_f^{\frac{3}{2}}\|\nabla_{f}\times  \boldsymbol{v}_{h,\tau}\|_{f}+ h_f\|\v_{h,\tau}\cdot \boldsymbol{t}_{\partial f}\|_{\partial f} +\|\boldsymbol{\Pi}^0_{+,k-1}\boldsymbol{v}_{h,\tau}\|_f \right).
		\end{equation}
		Sine $ \nabla \times \boldsymbol{v}_h \in \boldsymbol{W}_k(K)$,  the stability estimates \eqref{LowerBoundH1} yields
		\begin{equation*}
			\begin{aligned}
				|\nabla \times \boldsymbol{v}_h|^2_{1, K}
				\lesssim h_K^{-2}\|\boldsymbol{\Pi}^{\nabla, K}_k \nabla \times\v_h\|^2_K +\sum_{f\in\partial K} (h_f^{-1}\|\boldsymbol{\Pi}^{0, f}_k\nabla \times\v_h\|^2_f +\|\nabla \times\v_h\|^2_{ \partial f}).
			\end{aligned}
		\end{equation*}
		Then we get the $L^2$-norm estimate
		\begin{equation}\label{aux2}
			\begin{aligned}
				\|\nabla\times \v_h\|_{K}&\le \|\nabla\times \v_h-\boldsymbol{\Pi}^{0, K}_{k}\nabla\times \v_h\|_K+\|\boldsymbol{\Pi}^{0, K}_{k}\nabla\times \v_h\|_K\\
				&\lesssim h_K|\nabla \times\v_h|_{1, K}+\|\boldsymbol{\Pi}^{0, K}_{k}\nabla\times \v_h\|_K\\
				&\lesssim \|\boldsymbol{\Pi}^{0, K}_k \nabla \times\v_h\|_K +\sum_{f\in\partial K} (h_f^{\frac{1}{2}}\|\boldsymbol{\Pi}^{0, f}_k\nabla \times\v_h\|_f +h_f\|\nabla \times\v_h\|_{ \partial f}).
			\end{aligned}
		\end{equation} 
		From \cite[Lemma 4.3]{ZJQQuadCurl}, we obtain the following estimate in $\mathbb{E}^e_{k-1,k+1}(f)$:
		\begin{equation*}
			\|\nabla_{f}\times  \v_{h,\tau}\|_f \lesssim \|\Pi^{0,f}_{k} \operatorname*{rot}\boldsymbol{v}_{h,\tau}\|_f + h_f^{\frac{1}{2}}\|\operatorname*{rot}\boldsymbol{v}_{h,\tau}\|_{\partial f}
		\end{equation*}
		which, together with \eqref{aux1} and  \eqref{aux2}, leads to 
		\begin{equation}\label{aux3}
			\begin{aligned}
				\|\boldsymbol{v}_h\|_K\lesssim h_K\|\boldsymbol{\Pi}^{0, K}_k \nabla \times\boldsymbol{v}_h\|_K+\sum_{f\in \partial K}\left(h_f^{\frac{3}{2}} \|\boldsymbol{\Pi}^{0, f}_k \nabla \times\boldsymbol{v}_h\|_f +h_f^{\frac{1}{2}}\|\boldsymbol{\Pi}^{0,f}_{+,k-1}\boldsymbol{v}_{h,\tau}\|_f \right.\\
				\left.+h_f^2 \|\nabla \times\boldsymbol{v}_{h}\|_{\partial f}+h_f\|\boldsymbol{v}_{h,\tau}\cdot \boldsymbol{t}_{\partial f}\|_{\partial f}\right).
			\end{aligned}
		\end{equation}
		This establishes the lower bound in \eqref{stab2}.
		\par 
		Next, we estimate the five terms on the right-hand side of \eqref{aux3} to derive the upper bound.
		From the stability of projection $\boldsymbol{\Pi}_{k}^{0,K}$ and  $\boldsymbol{\Pi}_{k}^{0,f}$, along with the trace inequality \eqref{H1trace}, we obtain
		\begin{equation}\label{aux5}
			\begin{aligned}
				\|\boldsymbol{\Pi}^{0,K}_{k}\nabla \times \v_h\|_K+\sum_{f\in\partial K}h_f^{\frac{3}{2}}\|\boldsymbol{\Pi}^{0, f}_k\nabla \times \v_h \|_f
				&\lesssim
				h_K\|\nabla \times\v_h\|_{ K}+ h_K^{\frac{3}{2}}\|\nabla \times\v_h\|_{\partial K}	\\
				&\lesssim h_K\|\nabla \times\v_h\|_{ K}+h_K^2|\nabla \times \vh|_{1, K}.
			\end{aligned}
		\end{equation}
		The trace inequality \eqref{H1trace} and inverse inequality in $\widehat{\mathbb{B}}_k(f)$ or $\overline{\mathbb{B}}_k(f)$ \cite{ChenLong2018}  lead to  
		\begin{equation}
			\begin{aligned}
				\sum_{f\in\partial K}h_f^2\|\nabla\times \vh\|_{\partial f} &\lesssim \sum_{f\in \partial K}h_f^2 (h_f^{-\frac{1}{2}}\|\nabla\times \vh\|_{ f}+h_f^{\frac{1}{2}}|\nabla\times \vh|_{1, f}) 
				\\ &\lesssim h_K^{\frac{3}{2}}\|\nabla \times\v_h\|_{\partial K} \lesssim h_K\|\nabla\times \vh\|_{K}+h_K^2|\nabla\times \vh|_{1, K}.
			\end{aligned}
		\end{equation}
		Finally, using  the trace inequality \eqref{TraceTangetial}  and \eqref{H1trace},
		along with Lemma \ref{VkTraceIneq}, we have
		\begin{equation}\label{PolyIn}
			\begin{aligned}
				\sum_{f\in\partial K}&(h_f^{\frac{1}{2}}\|\Pi^0_{+,k-1}\boldsymbol{v}_{h,\tau}\|_f+h_f\|\boldsymbol{v}_{h,\tau}\cdot \boldsymbol{t}_{\partial f}\|_{\partial f})\\
				&\lesssim \sum_{f\in\partial K}(h_f^{\frac{1}{2}}\|\boldsymbol{v}_{h,\tau}\|_{ f}+h_f^{\frac{3}{2}}\|\nabla_{f}\times  \boldsymbol{v}_{h,\tau}\|_{ f})\\
				&\lesssim \|\boldsymbol{v}_h\|_K+h_K\|\nabla\times \vh\|_{K}+h_K^2|\nabla\times \vh|_{1, K}. 
			\end{aligned}
		\end{equation}
		Thus, combining \eqref{aux5}-\eqref{PolyIn}, we conclude that the upper bound in \eqref{stab2} holds.
	\end{proof}

	\subsection{The discrete problem}
	By imposing homogeneous boundary conditions, we define the following discrete spaces:
	\begin{gather*}
		U_h:= U_{k}(\Omega)\cap H_0^1(\Omega), \quad\boldsymbol{V}_h:=\boldsymbol{V}_{k-1,k+1}(\Omega)\cap \boldsymbol{V}_0(\Omega), \\\boldsymbol{W}_h:=\boldsymbol{W}_k(\Omega)\cap \boldsymbol{H}_0^1(\Omega), \quad Q_h:=Q_{k-1}(\Omega)\cap L_0^2(\Omega).
	\end{gather*}
	These spaces form an exact discrete complex:
	\begin{equation}\label{homdisComplex}
		0 \stackrel{}{\longrightarrow} U_h\stackrel{\nabla}{\longrightarrow} \boldsymbol{V}_{h} \stackrel{\nabla \times }{\longrightarrow} \boldsymbol{W}_{h} \stackrel{\nabla \cdot }{\longrightarrow} Q_{h}\longrightarrow 0.
	\end{equation}
	The virtual element scheme for the grad-curl problem \eqref{VectorPotentialProblem} finds $(\boldsymbol{\psi}_h,  \lambda_h)\in \boldsymbol{V}_h  \times U_h$ such that
	\begin{equation}\label{disProblem}
		\left\{\begin{aligned}
			a_{h}\left(\nabla\times\boldsymbol{\psi}_{h}, \nabla\times\boldsymbol{\phi}_{h}\right)+b_{h}\left(\boldsymbol{\phi}_{h}, \nabla \lambda_{h}\right) & =\frac{1}{\nu}\left(\boldsymbol{f}_{h}, \nabla \times  \boldsymbol{\phi}_{h}\right), \quad \forall \boldsymbol{\phi}_{h} \in \boldsymbol{V}_h, \\
			b_{h}\left(\boldsymbol{\psi}_{h}, \nabla q_h \right) & =0, \quad \forall q_{h} \in U_h,
		\end{aligned}\right.
	\end{equation}
	where $\boldsymbol{f}_h|_K = \boldsymbol{\Pi }^{0,K}_{k} \boldsymbol{f}$ approximates $\boldsymbol{f}$ with the optimal convergence: 
	\begin{equation}\label{rightHandError}
		\|\boldsymbol{f}-\boldsymbol{f}_h\|\lesssim h^{s}\|\boldsymbol{f}\|_{s}, \quad s\le k.
	\end{equation}
	\par 
	We define the subspace
	\begin{equation*}
		\boldsymbol{X}_h =\{\boldsymbol{v}_h \in \boldsymbol{V}_{h}(\Omega): b_h(\boldsymbol{v}_h,\nabla q_h)=0,\, \forall q_h\in U_{h}(\Omega)\}
	\end{equation*}
	and present the well-posedness of the discrete problem as follows.
	\begin{theorem}\label{ExiUni}
		The discrete problem \eqref{disProblem} has a unique solution with $\lambda_h=0$.
	\end{theorem}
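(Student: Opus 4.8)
The plan is to reduce the statement to a purely finite-dimensional argument. Since \eqref{disProblem} is a square linear system on $\boldsymbol{V}_h\times U_h$, existence follows once the homogeneous problem is shown to have only the trivial solution, and the assertion $\lambda_h=0$ is then obtained by a separate test-function choice. The only ingredients needed are the stability bounds \eqref{ahStab} and \eqref{bhCoer} and the exactness of the homogeneous discrete complex \eqref{homdisComplex}; in particular no quantitative discrete Friedrichs-type estimate is required for this theorem.

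First I would take $\boldsymbol{f}_h=0$ and let $(\boldsymbol{\psi}_h,\lambda_h)$ solve the resulting problem. The second equation of \eqref{disProblem} says $\boldsymbol{\psi}_h\in\boldsymbol{X}_h$, so choosing $\boldsymbol{\phi}_h=\boldsymbol{\psi}_h$ in the first equation and using $b_h(\boldsymbol{\psi}_h,\nabla\lambda_h)=0$ gives $a_h(\nabla\times\boldsymbol{\psi}_h,\nabla\times\boldsymbol{\psi}_h)=0$. The lower bound in \eqref{ahStab} then forces $|\nabla\times\boldsymbol{\psi}_h|_{1,K}=0$ for every $K\in\mathcal{T}_h$, so $\nabla\times\boldsymbol{\psi}_h$ is constant on each element; since $\boldsymbol{\psi}_h\in\boldsymbol{V}_h\subset\boldsymbol{V}_0(\Omega)$ entails $\nabla\times\boldsymbol{\psi}_h\in\boldsymbol{H}_0^1(\Omega)$, this globally continuous, piecewise-constant field with vanishing trace must be identically zero, i.e. $\nabla\times\boldsymbol{\psi}_h=0$. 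Exactness of \eqref{homdisComplex} at $\boldsymbol{V}_h$ produces $\zeta_h\in U_h$ with $\boldsymbol{\psi}_h=\nabla\zeta_h$; the second equation with $q_h=\zeta_h$ then yields $b_h(\nabla\zeta_h,\nabla\zeta_h)=0$, and \eqref{bhCoer} forces $\nabla\zeta_h=0$, hence $\boldsymbol{\psi}_h=0$. The first equation now reduces to $b_h(\boldsymbol{\phi}_h,\nabla\lambda_h)=0$ for all $\boldsymbol{\phi}_h\in\boldsymbol{V}_h$; taking $\boldsymbol{\phi}_h=\nabla\lambda_h\in\boldsymbol{V}_h$ (which is admissible by the structure of \eqref{homdisComplex}) and invoking \eqref{bhCoer} once more gives $\nabla\lambda_h=0$, so $\lambda_h=0$ by the Poincar\'e inequality on $H_0^1(\Omega)$. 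Thus the kernel is trivial, and \eqref{disProblem} is uniquely solvable for every right-hand side.

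To see that the solution always has $\lambda_h=0$, I would test the first equation of \eqref{disProblem} with $\boldsymbol{\phi}_h=\nabla\lambda_h\in\boldsymbol{V}_h$. Since $a_h$ is a bilinear form in the curls of its arguments and $\nabla\times\nabla\lambda_h=0$, the $a_h$-term drops out, and $(\boldsymbol{f}_h,\nabla\times\nabla\lambda_h)=0$ as well, leaving $b_h(\nabla\lambda_h,\nabla\lambda_h)=0$; hence $\nabla\lambda_h=0$ by \eqref{bhCoer} and $\lambda_h=0$ by Poincar\'e. This is the discrete counterpart of the observation recorded after \eqref{VectorPotentialProblem}.

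The argument is essentially bookkeeping once the earlier results are in place, and there is no serious obstacle. The one step that deserves a little care is the implication $a_h(\nabla\times\boldsymbol{\psi}_h,\nabla\times\boldsymbol{\psi}_h)=0\Rightarrow\nabla\times\boldsymbol{\psi}_h=0$, which combines the elementwise lower stability estimate of \eqref{ahStab} with the $\boldsymbol{H}(\operatorname{curl})$-to-$\boldsymbol{H}_0^1$ smoothness built into $\boldsymbol{V}_0(\Omega)$ to upgrade ``piecewise constant'' to ``identically zero''. I would also stress that the genuinely delicate estimate---coercivity of $a_h$ on all of $\boldsymbol{X}_h$ through a discrete Friedrichs-type inequality---is \emph{not} needed for this well-posedness statement, and will instead be invoked only in the later convergence analysis.
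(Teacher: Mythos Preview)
Your proof is correct and takes a genuinely different, more elementary route than the paper. The paper establishes well-posedness via the Babu\v{s}ka--Brezzi framework: it proves a discrete Friedrichs inequality \eqref{disFre} (coercivity of $a_h$ on $\boldsymbol{X}_h$ in the full $\|\cdot\|_{\boldsymbol{V}(\Omega)}$ norm, with constant independent of $h$) together with the discrete inf-sup condition \eqref{disLBB}, and only then concludes existence, uniqueness, and $\lambda_h=0$. You instead exploit finite-dimensionality directly, reducing everything to triviality of the homogeneous kernel; this sidesteps both \eqref{disFre} and \eqref{disLBB} and is shorter for the bare well-posedness statement. The trade-off is that the paper's argument delivers more: the uniform-in-$h$ coercivity constant it produces is exactly what drives the later a~priori estimate in Theorem~\ref{TheoremConv1} (the first step there, $\|\boldsymbol{z}_h-\boldsymbol{\psi}_h\|_{\boldsymbol{V}(\Omega)}^2\lesssim a_h(\boldsymbol{z}_h-\boldsymbol{\psi}_h,\boldsymbol{\delta}_h)$, relies on \eqref{disFre}, not merely on the local bound \eqref{ahStab}). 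Your closing remark anticipates this correctly: your argument settles Theorem~\ref{ExiUni} as stated, but the discrete Friedrichs inequality must still be proved somewhere before the convergence analysis can proceed.
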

	\begin{proof}
		We first prove that the discrete Friedrichs inequality on $\boldsymbol{X}_h$: for any given $\v_h \in \boldsymbol{X}_h$, there exists a constant $\alpha $ such that 
		\begin{equation}\label{disFre}
			a_h(\nabla\times\boldsymbol{v}_h,\nabla\times\boldsymbol{v}_h)\ge \alpha \|\boldsymbol{v}_h\|_{\boldsymbol{V}(\Omega)}.
		\end{equation}		  
		Consider the unique solution $w\in H_0^1(\Omega)$ to the Poisson equation:
		\begin{equation*}
			\Delta w= \nabla \cdot \boldsymbol{v}_h \quad \text{in } H^{-1}(\Omega).
		\end{equation*}
		Define $\boldsymbol{z}=\boldsymbol{v}_h -\nabla w$. Then $\boldsymbol{z}$ satisfies weakly
		\begin{equation*}
			\nabla \times\boldsymbol{z} =\nabla \times\boldsymbol{ v}_h , \quad \nabla \cdot \boldsymbol{z}=0\quad \text{in } \Omega, \quad \boldsymbol{z}\times \boldsymbol{n}=0 \quad \text{on } \Gamma,
		\end{equation*}
		which implies $ \boldsymbol{z}\in \boldsymbol{X}(\Omega)$.
		By the Friedrichs inequality \eqref{FreIneq},  it holds for $s>\frac{1}{2}$
		\begin{equation}\label{aux6}
			\|\boldsymbol{z}\|_{s}\lesssim  \|\nabla \times\boldsymbol{z}\|=\|\nabla\times \v_h\|.
		\end{equation}
		Hence, $\boldsymbol{I}_h \boldsymbol{z}$ is well-defined, and the commutativity between $\boldsymbol{I}_h$ and $\boldsymbol{J}_h$ in diagram \eqref{CommutDigram} establishes that
		\begin{equation*}
			\nabla\times (\boldsymbol{I}_h\boldsymbol{z} -\boldsymbol{v}_h) = \boldsymbol{J}_h \nabla\times \boldsymbol{z} -\nabla\times \v_h=\boldsymbol{J}_h \nabla\times\boldsymbol{v}_h -\nabla\times \v_h =0.
		\end{equation*}
		Combined with the exactness of discrete complex \eqref{homdisComplex}, there exists $q_h \in U_h$ satisfying
		\begin{equation}\label{Ihz}
			\nabla q_h = \boldsymbol{v}_h- \boldsymbol{I}_h \boldsymbol{z}.
		\end{equation}
		Applying \eqref{aux6} and the interpolation error estimate \eqref{Intpolation1} yields
		\begin{equation}\label{auxIh}
			\|\boldsymbol{I}_h\boldsymbol{z}\| \le \|\boldsymbol{z}\| +\|\boldsymbol{z} -\boldsymbol{I}_h \boldsymbol{z}\|\lesssim  \|\nabla \times\boldsymbol{v}_h\|_1.
		\end{equation}
		Using \eqref{Ihz} and the fact that $\boldsymbol{v}_h \in \boldsymbol{X}_h$, we obtain 
		\begin{equation*}
			b_h(\boldsymbol{v}_h, \boldsymbol{v}_h)=b_h( \boldsymbol{v}_h,\boldsymbol{I}_h\boldsymbol{z}+\nabla q_h)=b_h(\boldsymbol{v}_h, \boldsymbol{I}_h\boldsymbol{z}).
		\end{equation*}
		The coercivity \eqref{bhCoer} and continuity \eqref{bhConti} lead to
		\begin{equation*}
			\|\boldsymbol{v}_h\|^2\lesssim b_h(\boldsymbol{v}_h, \boldsymbol{v}_h)=b_h(\boldsymbol{v}_h, \boldsymbol{I}_h\boldsymbol{z})\lesssim  \|\boldsymbol{v}_h\|_{\boldsymbol{V}(\Omega)}\|\boldsymbol{I}_h\boldsymbol{z}\|_{\boldsymbol{V}(\Omega)},
		\end{equation*}
		which, combined with   \eqref{auxIh}, yields 
		\begin{equation*}
			\|\boldsymbol{v}_h\|^2\lesssim \|\boldsymbol{v}_h\|_{\boldsymbol{V}(\Omega)}\|\nabla \times\boldsymbol{v}_h\|_1.
		\end{equation*}
		This inequality, together with the stability \eqref{ahStab}, establishes the discrete Friedrichs inequality  \eqref{disFre}.
		\par
		We now prove the discrete inf-sup condition:
		there exists a positive constant $\beta $ independent of $h$ such that 
		\begin{equation}\label{disLBB}
			\sup _{\boldsymbol{\phi}_{h} \in \boldsymbol{V}_h /\{0\}} \frac{b_h\left(\boldsymbol{\phi}_h, \nabla  q_h\right)}{\left\|\boldsymbol{\phi}_{h}\right\|_{\boldsymbol{V}(\Omega)}} \geq \beta\|\nabla  q_{h}\|,  \quad \forall q_{h} \in U_h.
		\end{equation}
		To prove this, take $\boldsymbol{\phi}_h =\nabla q_h$ for any $q_h\in U_h$ and use the coercivity \eqref{bhCoer}, which  gives
		\begin{equation*}
			b_h(\boldsymbol{\phi}_h,\nabla q_h)= b_h(\nabla q_h, \nabla q_h) \gtrsim \|\nabla q_h\|^2.
		\end{equation*}
		Combined with $\|\boldsymbol{\phi}_h\|_{\boldsymbol{V}(\Omega)} = \|\nabla q_h\|$, the discrete inf-sup condition \eqref{disLBB} holds.
		\par 
		In conclusion,  the discrete problem \eqref{disProblem} is well-posed.
		Furthermore, by choosing
		$\v_h = \nabla \lambda_h$ in the first equation of \eqref{disProblem} and applying the coercivity \eqref{bhCoer},  we deduce that $\lambda_h =0$.
	\end{proof}
	\begin{remark}\label{disEqv}
		We introduce
		the discrete velocity-pressure pair $(\bu_h, p_h) \in \boldsymbol{W}_h\times Q_h$  as the unique solution to the virtual element discretization of the Stokes problem \eqref{StokesProblem} from \cite{VEMforStokes}: 
		\begin{equation}\label{DisStokesproblem}
			\begin{aligned}
				\nu a_h(\bu_h,\v_h) +(\nabla\cdot \boldsymbol{v}_h, p_h) &= (\boldsymbol{f}_h, \boldsymbol{v}_h), \quad\forall \v_h \in \boldsymbol{W}_h\\
				(\nabla\cdot \bu_h,q_h)&=0. \quad \forall q_h \in U_h.
			\end{aligned}
		\end{equation}
		Let $(\boldsymbol{\psi}_h,\lambda_h)\in \boldsymbol{V}_h\times U_h$ be the unique solution of \eqref{disProblem}. The exactness of the discrete sequence \eqref{homdisComplex} implies the relations:
		\begin{equation*}
			\nabla \times \boldsymbol{X}_h = \boldsymbol{W}_h\cap \ker(\nabla \cdot), \text{ and  } \nabla\cdot \boldsymbol{W}_h= Q_h.
		\end{equation*}
		It then follows that the discrete Stokes problem \eqref{DisStokesproblem} is equivalent to finding $\bu_h \in \boldsymbol{W}_h \cap \ker(\nabla \cdot)$ satisfying
		\begin{equation*}
			\nu a_h(\bu_h, \nabla\times \boldsymbol{\phi}_h)=(\boldsymbol{f}_h,\nabla\times \boldsymbol{\phi}_h), \quad \forall \boldsymbol{\phi}_h \in \boldsymbol{X}_h.
		\end{equation*}
		This formulation is identical to the discrete grad-curl problem \eqref{disProblem} with $\lambda_h=0$, which implies that $\boldsymbol{\psi}_h$ acts as the discrete vector potential for the velocity $\bu_h$ in \eqref{DisStokesproblem}, in the sense that
		$$ \nabla\times \boldsymbol{\psi}_h =\bu_h.$$
	\end{remark}	
	\begin{remark}
		The discrete scheme \eqref{disProblem} with the right-hands $(\boldsymbol{j}_h,\boldsymbol{\phi}_h)$ can also be applied to the quad-curl problem \eqref{primalForm},
		where $\boldsymbol{j}_h|_K = \boldsymbol{\Pi}_{k-1}^{0,K}\boldsymbol{j}$ approximates $\boldsymbol{j}$ with optimal convergence:
		\begin{equation*}
			\|\boldsymbol{j}-\boldsymbol{j}_h\| \lesssim h^s\|\boldsymbol{j}\|_s, \quad s\le k.
		\end{equation*}
		While the convergence results (Theorems \ref{TheoremConv1} and \ref{allNormError}) in the next subsection are presented for the grad-curl problem, a parallel analysis yields analogous results for the quad-curl problem.
	\end{remark}

	\subsection{Convergence analysis}
	The standard Dupont–Scott theory \cite{Polyesti} provides the following local approximation results.
	\begin{lemma}
		For all $\v \in \boldsymbol{H}^s(\Omega)$ satisfying $\nabla \times\v \in \boldsymbol{H}^{s+1}(\Omega)$,
		there exist  $\v^{\pi}_{k-1}\in \boldsymbol{P}^{\text{dc}}_{k-1}(\Omega)$ and $ \v^{\pi}_{k+1} \in \boldsymbol{P}^{\text{dc}}_{k+1}(\Omega)$ with $0<s\le k$ such that 
		\begin{align}
			\label{polyappro}
			|\v -\v^{\pi}_{k-1}|_{m, K}&\lesssim h_K^{l-m}|\v|_{l, K}, \quad 0\le m\le l\le s, \\
			\label{divpolyappro}
			|\nabla \times(\v-\v^{\pi}_{k+1})|_{m, K}&\lesssim h_K^{l-m}|\nabla \times\v|_{l, K}, \quad 0\le m \le  l\le s+1,
		\end{align}
		where  $\boldsymbol{P}_k^{\text{dc}}(\Omega) = \left\{\boldsymbol{v}\in \boldsymbol{L}^2(\Omega); \boldsymbol{v}|_K \in \boldsymbol{P}_k(K), \forall K \in \mathcal{T}_h \right\}$.
	\end{lemma}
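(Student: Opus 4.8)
The plan is to construct both polynomials elementwise from the averaged Taylor polynomial of Dupont--Scott \cite{Polyesti}, using the mesh regularity assumption (A1). Fix $K\in\mathcal{T}_h$. Since $K$ is star-shaped with respect to a ball $B\subset K$ of radius $\ge\mu h_K$, for every integer $j\ge 0$ the averaged Taylor polynomial $Q_j^K:L^2(K)\to P_j(K)$ of degree $j$ over $B$ is well defined and satisfies the Bramble--Hilbert estimate
\[
|w-Q_j^K w|_{m,K}\lesssim h_K^{\,l-m}|w|_{l,K},\qquad 0\le m\le l\le j+1,
\]
with a constant depending only on $\mu$, $j$, and the dimension (extending to fractional $l$ by real interpolation). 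Applying $Q_j^K$ componentwise to vector fields, denote the resulting operator by $\boldsymbol{Q}_j^K$, and let $\v^\pi_{k-1}$, $\v^\pi_{k+1}$ be the piecewise polynomials whose restrictions to each $K$ are $\boldsymbol{Q}_{k-1}^K\v$ and $\boldsymbol{Q}_{k+1}^K\v$, respectively; both clearly lie in the broken spaces $\boldsymbol{P}^{\mathrm{dc}}_{k-1}(\Omega)$ and $\boldsymbol{P}^{\mathrm{dc}}_{k+1}(\Omega)$.

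For \eqref{polyappro}, since $s\le k=(k-1)+1$ the range $0\le m\le l\le s$ is admissible for $j=k-1$, so the displayed Bramble--Hilbert bound applied to $w=v_i$ on each $K$, summed over $\mathcal{T}_h$, is exactly the claimed estimate. For \eqref{divpolyappro}, the key observation is the commutation of the averaged Taylor projection with first-order differentiation, $\partial_i Q_j^K w = Q_{j-1}^K(\partial_i w)$, a classical property of the averaged Taylor kernel; by linearity this gives, on each $K$,
\[
\nabla\times \v^\pi_{k+1}\big|_K=\nabla\times\boldsymbol{Q}_{k+1}^K\v=\boldsymbol{Q}_{k}^K(\nabla\times\v)\big|_K .
\]
Here the right-hand side makes sense because $\nabla\times\v\in\boldsymbol{H}^{s+1}(K)$ is a genuine function even when $s<1$. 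Applying the Bramble--Hilbert estimate with $j=k$ to the field $\nabla\times\v$ yields $|\nabla\times(\v-\v^\pi_{k+1})|_{m,K}=|\nabla\times\v-\boldsymbol{Q}_k^K(\nabla\times\v)|_{m,K}\lesssim h_K^{\,l-m}|\nabla\times\v|_{l,K}$ for $0\le m\le l\le s+1\le k+1$, which is \eqref{divpolyappro}.

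The only genuinely delicate points are the differentiation--averaging commutation and the uniformity of the constants in terms of $\mu$ alone; both are standard consequences of (A1) together with the explicit integral representation of the averaged Taylor polynomial over the inscribed ball, so no new argument is required. The stated estimates then follow after summation over $K\in\mathcal{T}_h$.
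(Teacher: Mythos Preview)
Your proof is correct and follows exactly the approach the paper intends: the paper gives no proof beyond citing standard Dupont--Scott theory \cite{Polyesti}, and your elementwise averaged Taylor polynomial construction with the Bramble--Hilbert estimate and the differentiation--averaging commutation $\nabla\times\boldsymbol{Q}_{k+1}^K\v=\boldsymbol{Q}_k^K(\nabla\times\v)$ is precisely that theory spelled out. The density of smooth fields in $\boldsymbol{H}(\curl;K)$ justifies the commutation even when individual partial derivatives of $\v$ lack $L^2$ regularity, so there is no gap.
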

	\begin{theorem}\label{TheoremConv1}
		Suppose that $(\boldsymbol{\psi}, \lambda) \in \boldsymbol{V}_0(\Omega) \times H^1_0(\Omega) $ is the solution of the  problem \eqref{VectorPotentialProblem} with  $\lambda=0$, 
		and  $(\boldsymbol{\psi}_h, \lambda_h) \in \boldsymbol{V}_h\times U_h$  the solution of the discrete scheme \eqref{disProblem} with $\lambda_h=0$.  There holds 
		\begin{equation*}
			\begin{aligned}
				\|\boldsymbol{\psi}-\boldsymbol{\psi}_h\|_{\boldsymbol{V}(\Omega)}&\lesssim  \inf _{\boldsymbol{z}_{h} \in \boldsymbol{X}_{h}}\left\|\boldsymbol{\psi}-\boldsymbol{z}_{h}\right\|_{\boldsymbol{V}(\Omega)} \\
				\nonumber
				& \quad +\inf _{\boldsymbol{v}^{\pi}_{k+1} \in \boldsymbol{P}_{k+1}^{\text{dc}}(\Omega)}\left|\nabla \times\left(\boldsymbol{\psi}-\boldsymbol{v}^{\pi}_{k+1}\right)\right|_1+\frac{1}{\nu}\left\|\boldsymbol{f}-\boldsymbol{f}_{h}\right\|. 
			\end{aligned}
		\end{equation*}
	\end{theorem}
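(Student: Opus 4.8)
The plan is to run a Céa/Strang-type argument in the virtual element framework, exploiting that the theorem assumes $\lambda=0$ and, by Theorem~\ref{ExiUni}, $\lambda_h=0$, so that only the coercive block of the saddle-point problem enters the analysis. First I would fix an arbitrary $\boldsymbol{z}_h\in\boldsymbol{X}_h$ and an arbitrary piecewise polynomial $\boldsymbol{v}^{\pi}_{k+1}\in\boldsymbol{P}_{k+1}^{\text{dc}}(\Omega)$, and set $\boldsymbol{\delta}_h:=\boldsymbol{\psi}_h-\boldsymbol{z}_h$; since $\boldsymbol{\psi}_h\in\boldsymbol{X}_h$ by the second equation of \eqref{disProblem}, also $\boldsymbol{\delta}_h\in\boldsymbol{X}_h$. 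By the triangle inequality $\|\boldsymbol{\psi}-\boldsymbol{\psi}_h\|_{\boldsymbol{V}(\Omega)}\le\|\boldsymbol{\psi}-\boldsymbol{z}_h\|_{\boldsymbol{V}(\Omega)}+\|\boldsymbol{\delta}_h\|_{\boldsymbol{V}(\Omega)}$, so the task reduces to bounding $\|\boldsymbol{\delta}_h\|_{\boldsymbol{V}(\Omega)}$.

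The heart of the proof is a discrete error identity for $a_h(\nabla\times\boldsymbol{\delta}_h,\nabla\times\boldsymbol{\delta}_h)$. By the discrete Friedrichs inequality \eqref{disFre} and the stability \eqref{ahStab} (together with the Poincaré inequality on $\nabla\times\boldsymbol{V}_h\subset\boldsymbol{H}_0^1(\Omega)$, which lifts the curl-seminorm control to the full $\boldsymbol{V}(\Omega)$-norm), $\|\boldsymbol{\delta}_h\|_{\boldsymbol{V}(\Omega)}^2\lesssim a_h(\nabla\times\boldsymbol{\delta}_h,\nabla\times\boldsymbol{\delta}_h)$. Testing the first equation of \eqref{disProblem} with $\boldsymbol{\phi}_h=\boldsymbol{\delta}_h$ and using $\lambda_h=0$ gives $a_h(\nabla\times\boldsymbol{\psi}_h,\nabla\times\boldsymbol{\delta}_h)=\frac{1}{\nu}(\boldsymbol{f}_h,\nabla\times\boldsymbol{\delta}_h)$, hence $a_h(\nabla\times\boldsymbol{\delta}_h,\nabla\times\boldsymbol{\delta}_h)=\frac{1}{\nu}(\boldsymbol{f}_h,\nabla\times\boldsymbol{\delta}_h)-a_h(\nabla\times\boldsymbol{z}_h,\nabla\times\boldsymbol{\delta}_h)$. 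I would then, elementwise, add and subtract $\boldsymbol{v}^{\pi}_{k+1}$, use the polynomial consistency \eqref{ahConsist} (and the symmetry of $a_h^K$) to replace $\sum_K a_h^K(\nabla\times\boldsymbol{v}^{\pi}_{k+1},\nabla\times\boldsymbol{\delta}_h)$ by the exact $\sum_K(\nabla\nabla\times\boldsymbol{v}^{\pi}_{k+1},\nabla\nabla\times\boldsymbol{\delta}_h)_K$, and write $\boldsymbol{v}^{\pi}_{k+1}=\boldsymbol{\psi}-(\boldsymbol{\psi}-\boldsymbol{v}^{\pi}_{k+1})$. Summing over $K$ and inserting the continuous relation $(\nabla\nabla\times\boldsymbol{\psi},\nabla\nabla\times\boldsymbol{\delta}_h)=\frac{1}{\nu}(\boldsymbol{f},\nabla\times\boldsymbol{\delta}_h)$, valid because $\boldsymbol{\delta}_h\in\boldsymbol{V}_h\subset\boldsymbol{V}_0(\Omega)$ and $\lambda=0$ in \eqref{VectorPotentialProblem}, yields
\begin{equation*}
a_h(\nabla\times\boldsymbol{\delta}_h,\nabla\times\boldsymbol{\delta}_h)=\frac{1}{\nu}(\boldsymbol{f}_h-\boldsymbol{f},\nabla\times\boldsymbol{\delta}_h)-\sum_K a_h^K(\nabla\times(\boldsymbol{z}_h-\boldsymbol{v}^{\pi}_{k+1}),\nabla\times\boldsymbol{\delta}_h)+\sum_K(\nabla\nabla\times(\boldsymbol{\psi}-\boldsymbol{v}^{\pi}_{k+1}),\nabla\nabla\times\boldsymbol{\delta}_h)_K.
\end{equation*}

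From here I would bound the three terms: the first by $\frac{1}{\nu}\|\boldsymbol{f}-\boldsymbol{f}_h\|\,\|\nabla\times\boldsymbol{\delta}_h\|$, and the second and third by Cauchy--Schwarz together with the continuity of $a_h^K$ (a consequence of \eqref{ahStab}) and of the exact form, producing a factor $(|\nabla\times(\boldsymbol{z}_h-\boldsymbol{v}^{\pi}_{k+1})|_1+|\nabla\times(\boldsymbol{\psi}-\boldsymbol{v}^{\pi}_{k+1})|_1)\,|\nabla\times\boldsymbol{\delta}_h|_1$. Since $\|\nabla\times\boldsymbol{\delta}_h\|$ and $|\nabla\times\boldsymbol{\delta}_h|_1$ are both $\le\|\boldsymbol{\delta}_h\|_{\boldsymbol{V}(\Omega)}$, dividing by $\|\boldsymbol{\delta}_h\|_{\boldsymbol{V}(\Omega)}$, then applying the further triangle inequality $|\nabla\times(\boldsymbol{z}_h-\boldsymbol{v}^{\pi}_{k+1})|_1\le\|\boldsymbol{\psi}-\boldsymbol{z}_h\|_{\boldsymbol{V}(\Omega)}+|\nabla\times(\boldsymbol{\psi}-\boldsymbol{v}^{\pi}_{k+1})|_1$, combining with the opening triangle inequality, and taking the infimum over $\boldsymbol{z}_h\in\boldsymbol{X}_h$ and $\boldsymbol{v}^{\pi}_{k+1}\in\boldsymbol{P}_{k+1}^{\text{dc}}(\Omega)$ yields the stated estimate.

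The step I expect to be the main obstacle — more a matter of care than of difficulty — is the consistency manipulation: one must verify that $a_h^K(\cdot,\cdot)$ is still meaningful when a degree-$(k+1)$ polynomial $\boldsymbol{v}^{\pi}_{k+1}$ sits in one slot (it is, because $a_h^K$ depends only on the curls of its arguments, $\nabla\times\boldsymbol{v}^{\pi}_{k+1}\in\boldsymbol{P}_k(K)\cap\ker(\nabla\cdot)$, and by exactness of \eqref{homdisComplex} this is the curl of a function in $\boldsymbol{V}_{k-1,k+1}(K)$), and that \eqref{ahConsist} is invoked in the correct slot via the symmetry of $a_h^K$. The explicit convergence rates would then follow by choosing $\boldsymbol{z}_h=\boldsymbol{I}_h\boldsymbol{\psi}$ and combining the interpolation estimates with \eqref{divpolyappro} and \eqref{rightHandError}, which lies beyond the present statement.
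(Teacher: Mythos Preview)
Your proposal is correct and follows essentially the same Strang-type argument as the paper's proof: split via an arbitrary $\boldsymbol{z}_h\in\boldsymbol{X}_h$, use the discrete Friedrichs inequality \eqref{disFre} together with \eqref{ahStab} for coercivity on $\boldsymbol{\delta}_h$, insert $\boldsymbol{v}^{\pi}_{k+1}$ elementwise, apply the consistency \eqref{ahConsist} and the continuous equation \eqref{VectorPotentialProblem}, and bound the three resulting terms. The only cosmetic difference is the sign convention for $\boldsymbol{\delta}_h$ (the paper takes $\boldsymbol{z}_h-\boldsymbol{\psi}_h$ rather than $\boldsymbol{\psi}_h-\boldsymbol{z}_h$), and your closing remark about the well-definedness of $a_h^K$ on polynomial arguments is precisely the point the paper leaves implicit.
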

	\begin{proof}
		For any $\boldsymbol{z}_h\in \boldsymbol{X}_h $ and $\boldsymbol{v}^{\pi}_{k+1} \in \boldsymbol{P}^{dc}_{k+1}(\Omega)$, setting $\boldsymbol{\delta}_h = \boldsymbol{z}_h -\boldsymbol{u}_h \in \boldsymbol{X}_h$, we obtain  
		\begin{equation*}
			\begin{aligned}
				\|\boldsymbol{z}_h& -\boldsymbol{\psi}_h\|^2_{\boldsymbol{V}(\Omega)}\\
				&\lesssim  a_h(\boldsymbol{z}_h-\boldsymbol{\psi}_h, \boldsymbol{\delta}_h)
				\quad (\text{use the  coercivity } \eqref{ahStab})\\
				&=\sum_{K\in \mathcal{T}_h}(a_h^K(\boldsymbol{z}_h-\boldsymbol{v}^{\pi}_{k+1}, \boldsymbol{\delta}_h)+a_h^K(\boldsymbol{v}^{\pi}_{k+1}, \boldsymbol{\delta}_h)) \nonumber \\
				&\quad -\frac{1}{\nu}(\boldsymbol{f}_{h}, \nabla \times\boldsymbol{\delta}_h) \quad (\text{use } \eqref{disProblem} \text{ with }\lambda_h=0)\\
				&=\sum_{K\in \mathcal{T}_h}\left(a_h^K(\boldsymbol{z}_h-\boldsymbol{v}^{\pi}_{k+1}, \boldsymbol{\delta}_h)+(\nabla\nabla \times\boldsymbol{v}^{\pi}_{k+1}, \nabla\nabla \times\boldsymbol{\delta}_h)_K\right) \nonumber\\
				&\quad-\frac{1}{\nu}(\boldsymbol{f}_{h}, \nabla \times\boldsymbol{\delta}_h)\quad (\text{use the consistency } \eqref{ahConsist})\\
				&=\sum_{K\in \mathcal{T}_h}\left(a_h^K(\boldsymbol{z}_h-\boldsymbol{v}^{\pi}_{k+1}, \boldsymbol{\delta}_h)+(\nabla\nabla \times(\boldsymbol{v}^{\pi}_{k+1}-\boldsymbol{\psi}), \nabla\nabla \times\boldsymbol{\delta}_h)_K\right)\nonumber \\
				&\quad+\frac{1}{\nu}(\boldsymbol{f}-\boldsymbol{f}_h, \nabla \times\boldsymbol{\delta}_h) \quad (\text{use } \eqref{VectorPotentialProblem} \text{ with }\lambda=0).
			\end{aligned}
		\end{equation*}
		It follows from coercivity \eqref{ahStab} that 
		\begin{equation*}
			\|\boldsymbol{z}_h-\boldsymbol{\psi}_h\|_{\boldsymbol{V}(\Omega)}\lesssim (|\nabla \times(\boldsymbol{\psi-z}_h)|_1+|\nabla \times(\boldsymbol{\psi}-\boldsymbol{v}^{\pi}_{k+1})|_1+\frac{1}{\nu}\|\boldsymbol{f}-\boldsymbol{f}_h\|).
		\end{equation*} 
		By using the triangle inequality,  we get the desired result
		\begin{equation*}
			\|\boldsymbol{\psi} -\boldsymbol{\psi}_h\|_{\boldsymbol{V}(\Omega)}\lesssim (\|\boldsymbol{\psi-z}_h\|_{\boldsymbol{V}(\Omega)}+|\nabla\times(\boldsymbol{\psi}-\boldsymbol{v}^{\pi}_{k+1})|_1+\frac{1}{\nu}\|\boldsymbol{f}-\boldsymbol{f}_h\|).
		\end{equation*}
		Since $\boldsymbol{z}_h$ and $\boldsymbol{v}^{\pi}_{k+1}$ are arbitrary, the proof is complete.
	\end{proof}
	\begin{theorem}\label{allNormError}
		Suppose that $(\boldsymbol{\psi},  \lambda) \in \boldsymbol{V}_0(\Omega) \times H^1_0(\Omega) $ is the solution of the  problem \eqref{VectorPotentialProblem} with  $\lambda=0$, 
		and let  $(\boldsymbol{\psi}_h, \lambda_h) \in \boldsymbol{V}_h\times U_h$ be the solution of the discrete scheme \eqref{disProblem} with  $\lambda_h=0$.
		Then for $\boldsymbol{f}\in \boldsymbol{H}^{s}(\Omega)$ and $\boldsymbol{\psi} \in \boldsymbol{H}^{s}(\Omega)$ with $\nabla\times\boldsymbol{\psi} \in \boldsymbol{H}^{s+1}(\Omega)$, $\frac{1}{2}<s\le k$, we have 
		\begin{align}\label{GradCurlNormConv}
			\|\boldsymbol{\psi}-\boldsymbol{\psi}_h\|_{\boldsymbol{V}(\Omega)} \lesssim h^{s}(\|\boldsymbol{\psi}\|_{s}+\|\nabla\times\boldsymbol{\psi}\|_{s+1}+\frac{1}{\nu}\|\boldsymbol{f}\|_s).
		\end{align}
	\end{theorem}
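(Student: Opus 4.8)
The plan is to start from the quasi-optimal bound of Theorem~\ref{TheoremConv1} and estimate each of its three terms using the interpolation and polynomial approximation results already proved. Two of them are immediate: $\inf_{\boldsymbol{v}^{\pi}_{k+1}}|\nabla\times(\boldsymbol{\psi}-\boldsymbol{v}^{\pi}_{k+1})|_1\lesssim h^s|\nabla\times\boldsymbol{\psi}|_{s+1}$ follows from \eqref{divpolyappro} with $m=1$, $l=s+1$, and $\frac1\nu\|\boldsymbol{f}-\boldsymbol{f}_h\|\lesssim \frac1\nu h^s\|\boldsymbol{f}\|_s$ is exactly \eqref{rightHandError}. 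All the real work lies in bounding the best-approximation term $\inf_{\boldsymbol{z}_h\in\boldsymbol{X}_h}\|\boldsymbol{\psi}-\boldsymbol{z}_h\|_{\boldsymbol{V}(\Omega)}$, because the obvious candidate $\boldsymbol{I}_h\boldsymbol{\psi}$ (well defined by the regularity $\boldsymbol{\psi}\in\boldsymbol{V}^s(\Omega)$ from Remark~\ref{Regu}) need not lie in $\boldsymbol{X}_h$.

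To produce an admissible $\boldsymbol{z}_h$, I would first observe that $\boldsymbol{\psi}\in\boldsymbol{X}(\Omega)$ forces $\nabla\cdot(\nabla\times\boldsymbol{\psi})=0$, so the commutativity of \eqref{CommutDigram} gives $\nabla\cdot(\nabla\times\boldsymbol{I}_h\boldsymbol{\psi})=\nabla\cdot\boldsymbol{J}_h(\nabla\times\boldsymbol{\psi})=P_h\,\nabla\cdot(\nabla\times\boldsymbol{\psi})=0$; hence $\nabla\times\boldsymbol{I}_h\boldsymbol{\psi}\in\boldsymbol{W}_h\cap\ker(\nabla\cdot)=\nabla\times\boldsymbol{X}_h$ by the exactness of \eqref{homdisComplex}. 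Thus there is a unique $\boldsymbol{z}_h\in\boldsymbol{X}_h$ with $\nabla\times\boldsymbol{z}_h=\nabla\times\boldsymbol{I}_h\boldsymbol{\psi}$, and exactness again yields $q_h\in U_h$ with $\boldsymbol{I}_h\boldsymbol{\psi}-\boldsymbol{z}_h=\nabla q_h$. Testing $b_h(\boldsymbol{z}_h,\nabla q_h)=0$ against $\nabla q_h$ and using the coercivity \eqref{bhCoer} gives $\|\nabla q_h\|^2\lesssim b_h(\nabla q_h,\nabla q_h)=b_h(\boldsymbol{I}_h\boldsymbol{\psi},\nabla q_h)$. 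Splitting $\boldsymbol{I}_h\boldsymbol{\psi}=(\boldsymbol{I}_h\boldsymbol{\psi}-\boldsymbol{\psi}^{\pi}_{k-1})+\boldsymbol{\psi}^{\pi}_{k-1}$ with $\boldsymbol{\psi}^{\pi}_{k-1}$ the piecewise polynomial of \eqref{polyappro}, the consistency \eqref{bhConsist} turns the polynomial part into $(\boldsymbol{\psi}^{\pi}_{k-1},\nabla q_h)=(\boldsymbol{\psi}^{\pi}_{k-1}-\boldsymbol{\psi},\nabla q_h)$, the last step using $\boldsymbol{\psi}\in\boldsymbol{X}(\Omega)$ and $q_h\in H^1_0(\Omega)$; for the remaining part the continuity \eqref{bhConti} together with $\|\nabla q_h\|_{h,\boldsymbol{V}(K)}=\|\nabla q_h\|_K$ applies. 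Consequently $\|\nabla q_h\|\lesssim\|\boldsymbol{I}_h\boldsymbol{\psi}-\boldsymbol{\psi}\|_{h,\boldsymbol{V}(K)}+\|\boldsymbol{\psi}-\boldsymbol{\psi}^{\pi}_{k-1}\|_{h,\boldsymbol{V}(K)}+\|\boldsymbol{\psi}^{\pi}_{k-1}-\boldsymbol{\psi}\|$, which by \eqref{Intpolation1}--\eqref{Intpolation3} and \eqref{polyappro}--\eqref{divpolyappro} is $\lesssim h^s(\|\boldsymbol{\psi}\|_s+\|\nabla\times\boldsymbol{\psi}\|_{s+1})$.

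It then remains to combine $\|\boldsymbol{\psi}-\boldsymbol{z}_h\|_{\boldsymbol{V}(\Omega)}\le\|\boldsymbol{\psi}-\boldsymbol{I}_h\boldsymbol{\psi}\|_{\boldsymbol{V}(\Omega)}+\|\nabla q_h\|$ (the second term because $\nabla\times(\boldsymbol{I}_h\boldsymbol{\psi}-\boldsymbol{z}_h)=0$ so its $\boldsymbol{V}(\Omega)$-norm reduces to the $\boldsymbol{L}^2$-norm) with the interpolation estimates \eqref{Intpolation1}, \eqref{Intpolation2}, \eqref{Intpolation3}, using $\|\cdot\|_{\boldsymbol{V}(\Omega)}^2=\|\cdot\|^2+\|\nabla\times\cdot\|_1^2$, to get $\|\boldsymbol{\psi}-\boldsymbol{I}_h\boldsymbol{\psi}\|_{\boldsymbol{V}(\Omega)}\lesssim h^s(\|\boldsymbol{\psi}\|_s+\|\nabla\times\boldsymbol{\psi}\|_{s+1})$; feeding the three bounds into Theorem~\ref{TheoremConv1} gives \eqref{GradCurlNormConv}. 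I expect the main obstacle to be precisely the middle step — getting a computable bound on $\|\nabla q_h\|$ — since it requires simultaneously exploiting the commuting diagram, the exactness of the discrete complex, the polynomial consistency of $b_h$, and the exact (weak) divergence-free property of the continuous solution; once that is in place the rest is routine assembly of estimates already available.
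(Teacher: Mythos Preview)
Your proposal is correct and follows essentially the same route as the paper: reduce to Theorem~\ref{TheoremConv1}, handle the polynomial and right-hand terms directly, and for the best-approximation term construct $\boldsymbol{z}_h=\boldsymbol{I}_h\boldsymbol{\psi}-\nabla q_h\in\boldsymbol{X}_h$ and bound $\|\nabla q_h\|$ via $b_h(\boldsymbol{I}_h\boldsymbol{\psi},\nabla q_h)$ using consistency \eqref{bhConsist}, the divergence-free property of $\boldsymbol{\psi}$, and continuity \eqref{bhConti}. The only cosmetic difference is that the paper obtains $\boldsymbol{z}_h$ by invoking the $b_h$-orthogonal decomposition $\boldsymbol{V}_h=\nabla U_h\oplus^{\bot}\boldsymbol{X}_h$ directly, whereas you route through the commuting diagram and exactness; the resulting $\boldsymbol{z}_h$ and $q_h$ coincide, and the estimates are identical.
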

	\begin{proof}
		From the coercivity \eqref{bhCoer}, we have the following orthogonal decomposition with
		respect to the discrete inner products $b_h(\cdot,\cdot)$
		\begin{equation*}
			\boldsymbol{V}_h =\nabla U_h \oplus^\bot \boldsymbol{X}_h.
		\end{equation*}
		Hence, there exists $\phi_h \in U_h$ such that $\boldsymbol{I}_h \boldsymbol{\psi} - \nabla \phi_h \in \boldsymbol{X}_h$, and it follows that
		\begin{equation}\label{curlLBB}
			|\phi_h|_1\lesssim \sup_{q_h \in U_k(\Omega)/\{0\} }\frac{b_h(\nabla \phi_h , \nabla q_h)}{\|\nabla q_h\|} = \sup_{q_h \in U_k(\Omega)/ \{0\} }\frac{b_h(\boldsymbol{I}_h \boldsymbol{\psi}, \nabla q_h)}{\|\nabla q_h\|}.
		\end{equation}
		Let $\boldsymbol{\psi}^{\pi}_{k-1} \in \boldsymbol{P}_{k-1}^{dc}(\Omega)$ be the approximation to $\boldsymbol{\psi}$ satisfying \eqref{polyappro}.
		Using the consistency \eqref{bhConsist}, we obtain
		\begin{equation*}
			\begin{aligned}
				b_h(\boldsymbol{I}_h\boldsymbol{\psi}, \nabla \phi_h)=(\boldsymbol{\psi}^{\pi}_{k-1}-\boldsymbol{\psi}, \nabla \phi_h)+b_h(\boldsymbol{I}_h\boldsymbol{\psi}-\boldsymbol{\psi}^{\pi}_{k-1}, \nabla \phi_h).
			\end{aligned}
		\end{equation*}
		For the first term on the rignt-hand of the above equation, it holds
		\begin{equation}\label{aux7}
			(\boldsymbol{\psi}^{\pi}_{k-1}-\boldsymbol{\psi}, \nabla \phi_h)\lesssim h^s\|\boldsymbol{\psi}\|_s|\phi_h|_1.
		\end{equation}
		For the second one, according to the continuity \eqref{bhConti},  definition of the scaled norm $\|\cdot\|_{h,\boldsymbol{V}(K)}$, interpolation errors \eqref{Intpolation1}, \eqref{Intpolation2}, \eqref{Intpolation3}, we get
		\begin{equation*}
			\begin{aligned}
				b_h^K(&\boldsymbol{I}_h\boldsymbol{\psi}-\boldsymbol{\psi}^{\pi}_{k-1},  \nabla \phi_h)\nonumber\\
				&\lesssim \|\boldsymbol{I}_h\boldsymbol{\psi}-\boldsymbol{\psi}^{\pi}_{k-1}\|_{h,\boldsymbol{V}(K)}\|\nabla\phi_h\|_{h,\boldsymbol{V}(K)}\\
				&\lesssim (\|\boldsymbol{\psi}-\boldsymbol{I}_h\boldsymbol{\psi}\|_{h,\boldsymbol{V}(K)}+\|\boldsymbol{\psi}- \boldsymbol{\psi}^{\pi}_{k-1}\|_{h,\boldsymbol{V}(K)})|\phi_h|_{1,K}\\
				&\lesssim\left(\|\boldsymbol{\psi}-\boldsymbol{I}_h\boldsymbol{\psi}\|_K+h_K\|\nabla \times (\boldsymbol{\psi}-\boldsymbol{I}_h\boldsymbol{\psi})\|_K+h_K^2|\nabla \times (\boldsymbol{\psi}-\boldsymbol{I}_h\boldsymbol{\psi})|_{1,K}\right.\\
				&\quad+ \left. \|\boldsymbol{\psi}-\boldsymbol{\psi}_{k-1}^{\pi}\|_K+h_K|\boldsymbol{\psi}-\boldsymbol{\psi}_{k-1}^{\pi}|_{1,K}+h_K^2|\boldsymbol{\psi}-\boldsymbol{\psi}_{k-1}^{\pi}|_{2,K} \right)|\phi_h|_{1,K}\\
				&\lesssim h_K^s(\|\boldsymbol{\psi}\|_{s,K}+\|\nabla \times \boldsymbol{\psi}\|_{s+1,K})|\phi_h|_{1,K},
			\end{aligned}
		\end{equation*}
		which,  together with \eqref{aux7} and \eqref{curlLBB}, yields
		\begin{equation*}
			| \phi_h|_1\lesssim h^s(\|\boldsymbol{\psi}\|_s+\|\nabla \times\boldsymbol{\psi}\|_{s+1}).
		\end{equation*}
		Then we have
		\begin{equation*}
			\begin{aligned}
				\inf _{\boldsymbol{z}_{h} \in \boldsymbol{Z}_{h}}\left\|\boldsymbol{\psi}-\boldsymbol{z}_{h}\right\|_{\boldsymbol{V}(\Omega)}&\le \|\boldsymbol{\psi}-\boldsymbol{I}_h\boldsymbol{\psi}\|_{\boldsymbol{V}(\Omega)}+| \phi_h|_1\\
				&\lesssim  h^s(\|\boldsymbol{\psi}\|_s+\|\nabla\times\boldsymbol{\psi}\|_{s+1}),
			\end{aligned}
		\end{equation*}
		which, combined with  \eqref{TheoremConv1}, implies \eqref{GradCurlNormConv}.
	\end{proof}

	Since $\nabla \times \boldsymbol{\psi}_h$ is the discrete solution to \eqref{DisStokesproblem} by Remark \ref{Regu}, the convergence result from \cite{VEMforStokes,daVeiga2017} yields the following estimates:
	\begin{theorem}\label{divH1est}
		Under the assumptions of Theorem \ref{allNormError},  we have
		\begin{align}\label{theor6}
			|\nabla \times(\boldsymbol{\psi}-\boldsymbol{\psi}_h)|_{1}&\lesssim  h^s(\|\nabla \times\boldsymbol{\psi}\|_{s+1}+\frac{1}{\nu}\|\boldsymbol{f}\|_s).\\
			\|\nabla \times(\boldsymbol{\psi}-\boldsymbol{\psi}_h)\|&\lesssim h^{s+1} (\|\nabla \times\boldsymbol{\psi}\|_{s+1}+\frac{1}{\nu}\|\boldsymbol{f}\|_s).
		\end{align}
	\end{theorem}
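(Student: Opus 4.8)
The plan is to reduce both estimates to the known convergence theory for the velocity--pressure virtual element discretization \eqref{DisStokesproblem} of the Stokes system, using the identification recorded in Remark \ref{disEqv}. By that remark, $\bu_h:=\nabla\times\boldsymbol{\psi}_h\in\boldsymbol{W}_h\cap\ker(\nabla\cdot)$ is exactly the discrete Stokes velocity and the discrete grad--curl problem \eqref{disProblem} with $\lambda_h=0$ coincides with $\nu a_h(\bu_h,\nabla\times\boldsymbol{\phi}_h)=(\boldsymbol{f}_h,\nabla\times\boldsymbol{\phi}_h)$ on $\boldsymbol{X}_h$; by Remark \ref{Regu}, $\bu:=\nabla\times\boldsymbol{\psi}$ is the exact Stokes velocity, with the regularity $\bu\in\boldsymbol{H}^{s+1}(\Omega)$. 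Since $|\nabla\times(\boldsymbol{\psi}-\boldsymbol{\psi}_h)|_1=|\bu-\bu_h|_1$ and $\|\nabla\times(\boldsymbol{\psi}-\boldsymbol{\psi}_h)\|=\|\bu-\bu_h\|$, it suffices to bound the Stokes velocity error.

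For the $\boldsymbol{H}^1$-seminorm I would run the standard Strang-type argument in the discretely divergence-free space $\boldsymbol{W}_h\cap\ker(\nabla\cdot)$. Because this space consists of genuinely divergence-free fields, the pressure drops out of the velocity error, and one obtains
\[
|\bu-\bu_h|_1\;\lesssim\;\inf_{\v_h\in\boldsymbol{W}_h\cap\ker(\nabla\cdot)}|\bu-\v_h|_1\;+\;\inf_{\v^\pi\in\boldsymbol{P}^{\text{dc}}_{k+1}(\Omega)}|\nabla\times(\boldsymbol{\psi}-\v^\pi)|_1\;+\;\tfrac{1}{\nu}\|\boldsymbol{f}-\boldsymbol{f}_h\|,
\]
the middle term coming from the consistency \eqref{ahConsist} of $a_h$ and the last from the right-hand side perturbation. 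Choosing $\v_h=\boldsymbol{J}_h\bu$ (well defined since $\bu\in\boldsymbol{H}^{s+1}(\Omega)$ with $\tfrac32<s+1\le k+1$, and divergence-free by the commuting diagram \eqref{CommutDigram}), the three terms are estimated by \eqref{interpolation_J}, \eqref{divpolyappro} (with $m=1$, $l=s+1$), and \eqref{rightHandError}, respectively; each is $O(h^{s})$, which gives \eqref{theor6}.

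For the $\boldsymbol{L}^2$-norm I would use an Aubin--Nitsche duality argument for the Stokes system: let $(\boldsymbol{w},r)$ solve the (self-adjoint) Stokes problem with datum $\bu-\bu_h$, assuming the elliptic regularity $\|\boldsymbol{w}\|_2+\|r\|_1\lesssim\|\bu-\bu_h\|$. Since both $\bu$ and $\bu_h$ are divergence-free, $r$ contributes nothing; testing the error equation against $\boldsymbol{J}_h\boldsymbol{w}\in\boldsymbol{W}_h\cap\ker(\nabla\cdot)$, using the consistency of $a_h$, the continuity/consistency of the right-hand side, the already established $\boldsymbol{H}^1$ estimate, and $\|\boldsymbol{w}-\boldsymbol{J}_h\boldsymbol{w}\|+h|\boldsymbol{w}-\boldsymbol{J}_h\boldsymbol{w}|_1\lesssim h^2\|\boldsymbol{w}\|_2$ from \eqref{interpolation_J}, one gains a full extra power of $h$ and reaches $\|\bu-\bu_h\|\lesssim h^{s+1}(\|\nabla\times\boldsymbol{\psi}\|_{s+1}+\tfrac1\nu\|\boldsymbol{f}\|_s)$. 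Both bounds are precisely the velocity error estimates of \cite{VEMforStokes,daVeiga2017}, now read through Remark \ref{disEqv}.

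The main obstacle is the $\boldsymbol{L}^2$ duality step, which relies on full $\boldsymbol{H}^2\times H^1$ regularity of the dual Stokes problem; on a general Lipschitz polyhedron this holds only for a restricted range of the smoothness index (e.g.\ on convex $\Omega$), and in less regular situations the rate $h^{s+1}$ must be correspondingly tempered. The $\boldsymbol{H}^1$ estimate is essentially unconditional, the only delicate point there being to match the fractional index $s+1$ demanded by $\boldsymbol{J}_h$ against the Stokes regularity supplied by Remark \ref{Regu}, which is exactly what the hypothesis $\tfrac12<s\le k$ guarantees.
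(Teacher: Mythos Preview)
Your proposal is correct and follows exactly the paper's approach: the paper proves this theorem in one sentence by invoking Remark \ref{disEqv} to identify $\nabla\times\boldsymbol{\psi}_h$ with the discrete Stokes velocity $\bu_h$ of \eqref{DisStokesproblem} and then citing the convergence results of \cite{VEMforStokes,daVeiga2017}. Your sketch of the Strang-type bound and the Aubin--Nitsche duality simply unpacks what those cited references contain, and your caveat about the $\boldsymbol{H}^2\times H^1$ regularity needed for the $\boldsymbol{L}^2$ estimate is appropriate.
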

	\subsection{A pressure-decoupled, symmetric positive definite, reduced system}
	Having established in Remark \ref{disEqv} that the discrete grad–curl problem \eqref{disProblem} is the vector potential formulation of the discrete Stokes problem \eqref{DisStokesproblem}, we briefly explore the potential advantages of this approach for solving the Stokes problem \eqref{StokesProblem}, relative to the conventional velocity-pressure solver. 
	\par 
	Let $\boldsymbol{\psi}_h$ and $\bu_h$ be the discrete solutions of \eqref{disProblem} and \eqref{DisStokesproblem}, respectively.
	Since we are primarily interested $\nabla\times \boldsymbol{\psi}_h$, we can eliminate degrees of freedom that do not affect it, allowing us to introduce the following reduced spaces.
	Define 
	\begin{equation}
		\widehat{U}_h:=U_1(\Omega)\cap H_0^1(\Omega) \text{ and } \widehat{\boldsymbol{V}}_h:=\boldsymbol{V}_{0,k+1}(\Omega)\cap \boldsymbol{V}_0(\Omega).
	\end{equation}        
	In addition, we recall the reduced velocity space $\widehat{\boldsymbol{W}}_h$ and pressure space $\widehat{Q}_h $ introduced in \cite{VEMforStokes}. These spaces are constructed by requiring the divergence of the local space $\boldsymbol{W}_k(K)$  to lie in $P_0(K)$, and then gluing these local spaces to form $\widehat{\boldsymbol{W}}_h$,  equipped with the degrees of freedom $\mathbf{D}_{\boldsymbol{W}}$ except the divergence term \eqref{DW6}. The corresponding pressure space $\widehat{Q}_h$ is taken to be the piecewise constant space. In what follows, the discrete problems \eqref{disProblem} and \eqref{DisStokesproblem}  are discussed within these reduced spaces. 
	This yields the unique solutions $(\widehat{\boldsymbol{\psi}}_h, \widehat{\lambda}_h) \in \widehat{\boldsymbol{V}}_h \times \widehat{U}_h$ and $(\widehat{\bu}_h, \widehat{p}_h) \in \widehat{\boldsymbol{W}}_h \times \widehat{Q}_h$.
	\par 
	\begin{remark}\label{reducedRemark}
		The following reduced complex 
		\begin{equation}\label{ReducedComplex}
			0 \stackrel{}{\longrightarrow} \widehat{U}_h\stackrel{\nabla}{\longrightarrow} \widehat{\boldsymbol{V}}_{h} \stackrel{\nabla \times }{\longrightarrow} \widehat{\boldsymbol{W}}_{h} \stackrel{\nabla \cdot }{\longrightarrow} \widehat{Q}_{h}\longrightarrow 0
		\end{equation}
		is exact.
		The exactness $\nabla\cdot \widehat{\boldsymbol{W}}_h=\widehat{Q}_h$ ensures that the divergence $\nabla\cdot \widehat{\boldsymbol{u}}_h$ is exactly zero, whence we obtain $\widehat{\boldsymbol{u}}_h=\boldsymbol{u}_h$. In addition, the identity $\nabla\times \widehat{\boldsymbol{V}}_h =\nabla\times \boldsymbol{V}_h$ leads to $\nabla\times \widehat{\boldsymbol{\psi}}_h =\nabla\times \boldsymbol{\psi}_h$.  It then follows from Remark \ref{disEqv} that
		\[
		\nabla \times \widehat{\boldsymbol{\psi}}_h = \nabla \times \boldsymbol{\psi}_h = \bu_h = \widehat{\bu}_h.
		\]  
	\end{remark}
	Let $N_V$, $N_E$, $N_F$, and $N$ denote the number of vertices, edges, faces, and elements in the mesh $\mathcal{T}_h$, respectively. Recalling the local space dimensions from Section 3, and noting that vanishing boundary conditions are omitted, the following dimensional results hold for the global reduced spaces:
	\begin{align*}
		\dim(\widehat{U}_h)&=N_V,\\ 
		\dim(\widehat{\boldsymbol{V}}_h)&=3N_V+(3k-2)N_E+(2\dim(P_{k-2}(f)+\dim(P_m(f)-1)N_F+ \\
		&\quad+(3\dim(P_{k-2}(K))+\dim(P_{k-1}(K))+1)N,\\
		\dim(\widehat{\boldsymbol{W}}_h)&=3N_V + 3(k-1)N_E+ (2\text{dim}(P_{k-2}(f))+\text{dim}(P_m(f)))N_F\\
		&\quad + (3\text{dim}(P_{k-2}(K))-\text{dim}(P_{k-1}(K))+1)N,\\
		\dim(\widehat{Q}_h)&=N.
	\end{align*}
	\par
	Now we introduce the following mixed formulation:
	\begin{equation}\label{EqudisProblem}
		\left\{\begin{aligned}
			a_{h}\left(\nabla\times\widehat{\boldsymbol{\psi}}_h, \nabla\times\widehat{\boldsymbol{\phi}}_{h}\right)+b_{h}\left(\widehat{\boldsymbol{\phi}}_{h}, \nabla \widehat{\lambda}_h\right) & =\frac{1}{\nu}\left(\boldsymbol{f}_{h}, \nabla \times  \widehat{\boldsymbol{\phi}}_{h}\right), \quad \forall \boldsymbol{\phi}_{h} \in \widehat{\boldsymbol{V}}_h, \\
			b_{h}\left(\widehat{\boldsymbol{\psi}}_h, \nabla \widehat{q}_h \right)-\widetilde{b}_h(\nabla \widehat{\lambda}_h,\nabla \widehat{q}_h ) & =0, \quad \forall  \widehat{q}_h \in \widehat{U}_h,
		\end{aligned}\right.
	\end{equation}
	where $\widetilde{b}_h(\nabla \widehat{\lambda}_h,\nabla \widehat{q}_h )$ is defined by extracting only the diagonal entries of the matrix representing $b_h(\nabla \widehat{\lambda}_h,\nabla \widehat{q}_h)$.
	\begin{remark}
		The well-posedness of \eqref{EqudisProblem} is inherited from the stability of the primal scheme \eqref{disProblem}.  Furthermore, since $\widehat{\lambda}_h$ is identically zero, the pair $(\widehat{\boldsymbol{\psi}}_h, 0)$ is also a solution to \eqref{EqudisProblem}. Consequently, the problems \eqref{EqudisProblem} and \eqref{disProblem} are equivalent.
	\end{remark}
	The corresponding matrix formulation, analogous to the Maxwell system in \cite{Chen2018Pre}, reads:
	\begin{equation}
		\begin{pmatrix}
			E^\mathrm{T}AE & B_{\lambda}^\mathrm{T} \\
			B_{\lambda} & -\widetilde{M}
		\end{pmatrix}
		\begin{pmatrix}
			\widehat{\boldsymbol{\psi}}_h \\
			\widehat{\lambda}_h
		\end{pmatrix}
		=
		\begin{pmatrix}
			E^\mathrm{T}F \\
			0
		\end{pmatrix}.
	\end{equation}
	Here, $\widehat{\boldsymbol{\psi}}_h$ and $\widehat{\lambda}_h$ denote the vector representations of the corresponding virtual element solutions, $E$ is the matrix of the discrete curl operator, and $F$ is the vector with entries $\frac{1}{\nu}(\boldsymbol{f}_h, \boldsymbol{v}_h)$ for all $\boldsymbol{v}_h \in \widehat{\boldsymbol{W}}_h$. The diagonality of $\widetilde{M}$ permits an efficient implementation of the Schur complement system:
	\begin{equation}\label{VectPotenDis}
		(E^\mathrm{T}AE + B_{\lambda}^\mathrm{T} {\widetilde{M}}^{-1} B_{\lambda})\boldsymbol{\psi}_h = E^\mathrm{T}F.
	\end{equation}
	This system matrix can be interpreted as a discrete counterpart of the operator $-\nabla\times \Delta\nabla\times + \nabla\nabla\cdot$.
	
	Consequently, our final positive-definite system \eqref{VectPotenDis} has $\dim(\widehat{\boldsymbol{V}}_h) = N_E + \dim(\widehat{\boldsymbol{W}}_h) - N_F$ degrees of freedom. In contrast, the discrete velocity-pressure scheme
	\begin{equation}\label{VpDis}
		\begin{pmatrix}
			A & B_p^\mathrm{T} \\
			B_p & 0
		\end{pmatrix}
		\begin{pmatrix}
			\widehat{\boldsymbol{u}}_h \\
			\widehat{p}_h
		\end{pmatrix}
		=
		\begin{pmatrix}
			F \\
			0
		\end{pmatrix}
	\end{equation}
	requires $\dim(\widehat{\boldsymbol{W}}_h) + N$ degrees of freedom. Transformation from the saddle-point problem \eqref{VpDis} to the positive-definite system \eqref{VectPotenDis} is achieved at the cost of an additional stabilization term, leading to a net change of $N + N_F -N_E$ in the number of degrees of freedom. In particular, when $N_E < N + N_F$, our system exhibits fewer degrees of freedom while maintaining comparable accuracy, as established in Remark \ref{reducedRemark}. Furthermore, the discrete vector potential formulation is inherently pressure-decoupled, allowing for an independent pressure solver if required.
	\begin{remark}
		This paper preliminary investigates the grad-curl problem as a vector potential formulation for the Stokes system from a theoretical perspective. Future work will focus on practical computations, including a comparative analysis with the velocity-pressure formulation in FEM and VEM, and the development of efficient preconditioners tailored to the resulting symmetric positive-definite system.
	\end{remark}
	\section{Numerical experiments}
	In this section, we present some numerical results to demonstrate the theoretical results of the virtual element method \eqref{disProblem} in the lowest-order $r=k=1$.
	As in \cite{HuangXHQuadCurl}, we consider the grad-curl problem \eqref{VectorPotentialProblem} with the source form $\boldsymbol{f}$ and the dynamic viscosity $\nu=1$ on a unit cube $\Omega = [0,1]^3$ such that 
	\begin{equation*}
		\boldsymbol{\psi}(x,y,z) = \nabla\times 
		\begin{pmatrix}
			0 \\
			0 \\
			\sin^3(\pi x)\sin^3(\pi y)\sin^3(\pi z)
		\end{pmatrix}.
	\end{equation*}
	
	\par
	We solve the grad-curl problem by using the C++ library Vem++ \cite{VemCode}.
	Two kinds of meshes with different mesh sizes $h$ borrowed from \cite[Fig. 1]{VEMforStokes} are considered as follows.\par
	$\bullet $ Cube: structured meshes consisting of cubes; see Fig. \ref{mesh}(a);\par
	$\bullet $ Voro: Voronoi tessellations optimized by the Lloyd algorithm; see Fig. \ref{mesh}(b);
	\begin{figure}[htbp]
		\centering
		\subfigure[Cube]{
			\begin{minipage}[t]{0.4\textwidth}
				\centering
				\includegraphics[scale=0.4]{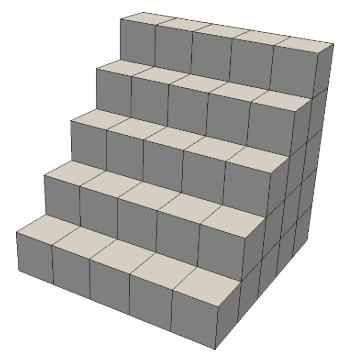}
			\end{minipage}
		}
		\subfigure[Voro]{
			\begin{minipage}[t]{0.4\textwidth}
				\centering
				\includegraphics[scale=0.4]{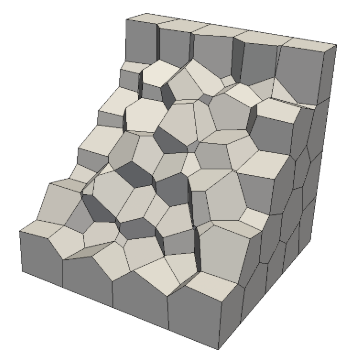}
			\end{minipage}
		}
		\centering
		\caption{Two representatives of the two families of meshes}
		\label{mesh}
	\end{figure}
	\\
	For the computation of the virtual element solution $\boldsymbol{\psi}_h$, since the error $\boldsymbol{\psi}-\boldsymbol{\psi}_h$ is not directly measured for VEM, we define two discrete error norms based on the equivalent bilinear forms \(b_h(\cdot,\cdot)\) and \(a_h(\cdot,\cdot)\) as follows:
	\begin{equation*}
		\begin{aligned}
			\|\boldsymbol{e}\|_h&=\sqrt{b_h(\boldsymbol{I}_h\boldsymbol{\psi}-\boldsymbol{\psi}_h, \boldsymbol{I}_h\boldsymbol{\psi}-\boldsymbol{\psi}_h)},\\
			|\nabla\times \boldsymbol{e}|_{1,h}&
			=\sqrt{a_h\left(\nabla\times(\boldsymbol{I}_h\boldsymbol{\psi}-\boldsymbol{\psi}_h), \nabla\times(\boldsymbol{I}_h\boldsymbol{\psi}-\boldsymbol{\psi}_h)\right)}.
		\end{aligned}
	\end{equation*}
	In fact, from the coercivity, continuity, polynomial approximation, and virtual element interpolation estimates, the computable error here scales like the ``exact one''.
	\par 
	We show the convergence results in Table. 1 and Table. 2 for the lowest order of $r= k = 1$ on the two meshes.
	We find that the convergence orders for the errors $\|\boldsymbol{e}\|_h$ and $|\nabla\times \boldsymbol{e}|_{1,h}$ are at least $O(h)$, which is consistent with the theoretical results.
	\begin{table}[ht]
		\centering
		\caption{Computed errors and rate of convergence with $r = k = 1$ on cube meshes}
		\begin{tabular}{rrrrrrr}
			\toprule
			Ndof &	\( h \) & \( \|\boldsymbol{e}\|_h \) & Rates & \( |\nabla\times \boldsymbol{e}|_{1,h} \) & Rates \\
			\midrule
			1040&	0.433012  & 1.730953E+01 &       & 7.771685E+01 &          \\
			6588&	0.216506  & 3.814650E+00 & 2.1819 & 4.550961E+01 & 0.7721 & \\
			20488&	0.144337  & 1.336897E+00 & 2.5859 & 3.044336E+01 & 0.9916 & \\
			46580&	0.108253  & 6.127802E$-$01 & 2.7117 & 2.284338E+01 & 0.9984  \\
			88704&	0.086602  & 3.316490E$-$01 & 2.7512 & 1.829537E+01 & 0.9949  \\
			\bottomrule 
		\end{tabular}
		\label{tab:Cube}
	\end{table}
	
	\begin{table}[ht]
		\centering
		\caption{Computed errors and rate of convergence with $r = k = 1$ on voro meshes}
		\begin{tabular}{rrrrrrr}
			\toprule
			Ndof &	\( h \) & \( \|\boldsymbol{e}\|_h \) & Rates & \( |\nabla\times \boldsymbol{e}|_{1,h} \) & Rates \\
			\midrule
			1028&	0.568225  & 3.475652E+01 &       & 9.499865E+01 &          \\
			4619&	0.318715  & 9.689208E+00 & 2.2091 & 5.898858E+01 & 0.8241 & \\
			39535&	0.153136  & 1.584513E+00 & 2.4704 & 2.599093E+01 & 1.1182 & \\
			80211&	0.120167  & 8.188194E$-$01 & 2.7230 & 1.970061E+01 & 1.1429  \\
			162179&	0.094650  & 4.342532E$-$01 & 2.6571 & 1.524933E+01 & 1.0730  \\
			\bottomrule
		\end{tabular}
		\label{tab:Voro}
	\end{table}
	
	\bibliographystyle{amsplain}
	\bibliography{GradCurl_reference}
	
\end{document}